\documentclass[12pt]{amsart}
\usepackage{verbatim}
\usepackage[mathscr]{eucal}
\usepackage{amscd}
\usepackage{amsthm}
\usepackage{enumerate}
\usepackage{comment}
\usepackage{url}
\usepackage{color}
\usepackage{mathtools,amssymb}
\usepackage[titletoc]{appendix}
\usepackage{hyperref}
\usepackage[margin=1in]{geometry}

\newtheorem{theorem}{Theorem}[section]
\newtheorem{lemma}[theorem]{Lemma}
\newtheorem{corollary}[theorem]{Corollary}
\newtheorem{proposition}[theorem]{Proposition}

\theoremstyle{definition}

\newtheorem{definition}[theorem]{Definition}
\newtheorem{remark}[theorem]{Remark}

\newtheorem{question}[theorem]{Question}
\newtheorem{problem}[theorem]{Problem}

\def\R{\mathbb{R}}

\def\cF{\mathcal{F}}

\def\e{\epsilon}

\newcommand{\calI}{\mathcal{I}}
\newcommand{\calJ}{\mathcal{J}}

\newcommand{\calU}{\mathcal{U}}

\newcommand{\emp}{\raisebox{1pt}{{$\scriptstyle\langle\makebox[.02in]{}\rangle$}}}
\newcommand{\semp}{\raisebox{0.5pt}{{$\scriptscriptstyle\langle\makebox[.01in]{}\rangle$}}}
\newcommand{\conc}{\raisebox{4pt}{{$\scriptstyle\frown$}}}
\newcommand{\sconc}{\raisebox{2pt}{{$\scriptscriptstyle\frown$}}}

\newcommand{\iseq}{\trianglelefteq}
\newcommand{\bn}[1]{{\prescript{\raisebox{1pt}{$\scriptscriptstyle <$}#1}{}{2}}}
\newcommand{\bne}[1]{{\prescript{\raisebox{1pt}{$\scriptscriptstyle \leq$}#1}{}{2}}}
\newcommand{\bl}[1]{{\prescript{#1}{}{2}}}
\newcommand{\inv}{^{\text{-}1}}

\newcommand{\seq}{\subseteq}

\newcommand{\clqedsym}{\dashv_{\text{\scriptsize claim}}}

\newcommand{\clqed}{%
  \ifmmode
    \eqno{\clqedsym}%
  \else
    \hfill$\clqedsym$%
  \fi
}

\renewcommand{\Im}{\operatorname{Im}}

\newcommand{\dotminus}{ 
\!\!\buildrel\textstyle~.\over{\hbox{ 
\vrule height3pt depth0pt width0pt}{\smash-} 
}}

\numberwithin{equation}{theorem}

\usepackage{refcount}

\newcounter{savedequation}

\newenvironment{delayedproof}[1]
  {%
    \setcounter{savedequation}{\value{equation}}%
    \setcounter{equation}{0}%
    \renewcommand{\theequation}{\getrefnumber{#1}.\arabic{equation}}%
    \begin{proof}[\textnormal{\textbf{Proof of Theorem~\ref{#1}}}]%
  }
  {%
    \end{proof}%
    \setcounter{equation}{\value{savedequation}}%
  }

\title{Encoding orders and trees in real-valued functions}

\author{G. Conant}
\address{Department of Mathematics, Statistics, and Computer Science\\
University of Illinois Chicago
}
\email{gconant@uic.edu}

\author{C. Terry}
\address{Department of Mathematics, Statistics, and Computer Science\\
University of Illinois Chicago
}
\email{caterry@uic.edu}

\date{July 23, 2026}

\thanks{GC was partially supported by an NSF grant (DMS-2452816); CT was partially supported by an NSF CAREER Award (DMS-2115518) and a Sloan Research Fellowship.}

\begin{document}
\begin{abstract}
We prove  function-theoretic analogues of a quantitative result of Hodges \cite{hodges} on extracting the order property from a sufficiently large $2$-tree coded in a binary relation. Similar analogues for functions were previously obtained by Daskalakis and Golowich \cite{DaskGo} and by Anderson and Benedikt \cite{AndBen}. These results are from statistical learning theory, where $2$-trees are captured by \emph{sequential fat-shattering dimension}, and the order property is controlled by various notions of ``thresholds". Our first main result (Theorem \ref{thm:hodgesfn1}) focuses on extracting a less restrictive kind of threshold from a tree, and  yields significantly better bounds compared to what can be obtained from earlier results focusing on more restrictive versions. Part of the motivation for Theorem \ref{thm:hodgesfn1} lies in our companion paper \cite{CT-QSAR}, where this theorem is used to obtain efficient bounds in quantitative regularity lemmas for ``stable functions". Here will use Theorem \ref{thm:hodgesfn1} to reprove a result from \cite{AndBen} in a stronger form and with improved bounds. We also use Theorem \ref{thm:hodgesfn1} to  prove an at most double-exponential bound on dual sequential fat-shattering, which resolves an open problem from \cite{DaskGo}. In our second main result (Theorem \ref{thm:tight}), we give a new proof of a result from \cite{DaskGo} on extracting ``tight thresholds" from large sequential fat-shattering dimension, with improved bounds. This resolves another open problem in \cite{DaskGo} related to correcting the proof of a result claimed by Jung, Kim, and Tewari \cite{JuKiTe}. 
\end{abstract}

\maketitle

\section{Introduction}

\subsection{Summary}

In 1971, Shelah established a fundamental correspondence between orders and trees coded in first-order formulas (see \cite[Theorem 2.9]{Sh10},  \cite[Theorem II.2.2]{shelah}). The  proof  relied on model-theoretic compactness and a theorem in infinitary combinatorics due to Erd\H{o}s and Makkai \cite{ErdMak}. 
A direct  proof with explicit quantitative bounds was later obtained by Hodges \cite{hodges} using finite combinatorics. We state this version below in Theorem \ref{thm:hodges}. 

This correspondence has important consequences in several fields. In model theory, it connects the absence of the order property with boundedness of Shelah’s local 2-rank, giving a rank-theoretic formulation of stability that underlies  definability of types. In graph theory, it plays a fundamental role in the Malliaris-Shelah stable regularity lemma \cite{MS}, which sparked a remarkably fruitful line of interaction  between model theory and combinatorics. Finally, in statistical learning theory, this correspondence establishes an equivalence between bounded threshold dimension and bounded Littlestone dimension, as formulated explicitly in \cite{ALMM} with precursors in \cite{Bhas,ChaFr2}. 

In this paper, we will prove  quantitative extensions of the order-tree correspondence to the setting of bounded real-valued functions. As  discussed in Appendix \ref{ss:2rankcont}, the existence of such a correspondence is implicit in   early work in  continuous logic on a suitable adaptation of Shelah 2-rank \cite{BYscat,BBHU,BYU}. However, this approach only produces a qualitative relationship, in analogy to Shelah's work in the discrete setting prior to the paper by Hodges. Quantitative results emerged more recently in statistical learning theory, e.g., \cite{AndBen,DaskGo,JuKiTe}, which focus on the relationship between fat-threshold dimension and sequential fat-shattering dimension (the real-valued generalization of Littlestone dimension). 

Our first main result (Theorem \ref{thm:hodgesfn1}) will be an extension of this work with significantly improved bounds. As a direct consequence we will improve the known bounds on the relationship between the sequential fat-shattering dimension of dual classes (Corollary \ref{cor:dualbound}), solving an open problem from \cite{DaskGo}. In our second main result (Theorem \ref{thm:tight}), we will use an elaborate modification of the proof of Theorem \ref{thm:hodgesfn1} to improve the bounds in a result of Daskalakis and Golowich \cite{DaskGo} on extracting ``tight thresholds" from  large sequential fat-shattering dimension. This will resolve another open problem in \cite{DaskGo} related to a correct proof of a result originally claimed by Jung, Kim, and Tewari \cite{JuKiTe}. 

This paper also serves as a companion to \cite{CT-QSAR}, in which we prove  regularity lemmas for ``stable functions" with explicit quantitative bounds, along the lines of Malliaris and Shelah's results for graphs \cite{MS}. Our first main result here (Theorem \ref{thm:hodgesfn1})  will be a crucial ingredient in that paper for obtaining bounds that are polynomial in the error parameter.

\subsection{Trees}
 
In this subsection, we define a  notion of trees coded in binary $[0,1]$-valued functions. 
This definition will require the following  standard notation for the tree structure on binary strings. 
Given $t\geq 1$, let $\bl{t}$ denote $\{0,1\}^t$. We view elements of $\bl{t}$ as binary strings of length $t$. We let $2^0=\{\emp\}$, where $\emp$ denotes the empty string, which by convention has length $0$.  Given $t>0$, let
\[
\textstyle\bn{t}=\bigcup_{0\leq i<t}\bl{i}.
\]
For $t\geq 0$, we also use $\bne{t}$ to denote $\bn{t+1}$.
For $s,t\geq 0$, $\sigma \in \bl{s}$, and $\mu \in \bl{t}$, let $\sigma \conc \mu\in \bl{s+t}$ denote the concatenation of $\sigma$ and $\mu$. 
We write $\sigma\iseq \tau$ if $\sigma$ is an initial segment of $\tau$, i.e., if there is some   $\mu$ such that $\tau=\sigma\conc \mu$.
When working with $2^{\leq t}$ for a fixed $t$, we will informally refer to elements of $\bn{t}$ as \emph{nodes} and elements of $\bl{t}$ as \emph{leaves}.

We first recall the discrete notion of trees for binary relations, as defined by Hodges \cite{hodges}.\footnote{Our formulation exchanges the places of the indexing parameters $\sigma\in\bn{t}$ and $\eta\in\bl{t}$ compared to the definition in \cite{hodges}. This switch is entirely cosmetic and has been done for  aesthetic reasons more relevant to our companion paper \cite{CT-QSAR}.}

\begin{definition}\label{def:hodgestree}
Fix $t\geq 1$ and $E\seq X\times Y$. A  \emph{$t$-tree} for $E$ consists of sequences $(x_{\sigma}: \sigma\in \bn{t})$ from $X$ and $(y_{\eta}: \eta\in \bl{t})$ from $Y$ such that for all $\sigma\in \bn{t}$ and $\eta,\eta'\in \bl{t}$, if  $\sigma\conc 0\iseq \eta$ and $\sigma\conc 1\iseq\eta'$ then $(x_\sigma,y_\eta)\not\in E$ and $(x_\sigma,y_{\eta'})\in E$. 
\end{definition}

The following is a generalization of this definition to functions.

\begin{definition}\label{def:eptree1}
Fix $t\geq 1$, $\delta>0$, and a function $f\colon X\times Y\to [0,1]$. A \emph{$(t,\delta)$-tree} for  $f$ consists of sequences
\[
(x_{\sigma}: \sigma\in \bn{t}) \text{ from }X,\quad (y_{\eta}: \eta\in \bl{t})\text{ from }Y, \quad \text{and}\quad (r_\sigma:\sigma\in \bn{t})\text{ from }[0,1]
\]
such that for all $\sigma\in \bn{t}$ and  $\eta,\eta'\in \bl{t}$, if  $\sigma\conc 0\iseq \eta$ and $\sigma\conc 1\iseq \eta'$ then $f(x_{\sigma},y_{\eta})\leq r_{\sigma}$ and $f(x_{\sigma},y_{\eta'})\geq r_{\sigma}+\delta$. 

Given such sequences, we refer to $t$ as the \emph{height} of the tree,  $\delta$ as the \emph{scale} of the tree, and $(r_\sigma:\sigma\in\bn{t})$ as the sequence of \emph{values} of the tree. The $x_{\sigma}$ are referred to as \emph{nodes} and the $y_{\eta}$ as \emph{leaves}.
\end{definition}

Note that a $t$-tree for a relation $E\seq X\times Y$ corresponds to a $(t,\delta)$-tree for the indicator function $\boldsymbol{1}_E\colon X\times Y\to [0,1]$, provided $\delta\leq 1$. Definition \ref{def:eptree1} also directly corresponds to sequential fat-shattering dimension (see Definition \ref{def:Ldim}) in analogy to the correspondence between Definition \ref{def:hodgestree} and Littlestone dimension (see \cite{Bhas,ChaFr2}).

\subsection{Ladders}\label{sec:ladders} We next turn to analogues of the order property for functions. In contrast to the case of trees,  several distinct variations of function-theoretic order properties have been considered in previous literature. To unify the presentation, we will phrase these definitions using analogues of Hodges' \cite{hodges} ``ladder" terminology from the discrete setting. Let us first again recall the definition in the discrete case.  

\begin{definition}\label{def:hodgesladder}
Fix $k\geq 1$ and $E\seq X\times Y$. A \emph{$k$-ladder} for $E$ consists of sequences $(x_1,\ldots,x_k)$ from $X$ and $(y_1,\ldots,y_k)$ from $Y$ such that for all $i,j\in[k]$, $(x_i,y_j)\in E$ if and only if $i\leq j$.
\end{definition}

Our main results will focus on three distinct generalizations of ladders to the setting of functions. 

\begin{definition}\label{def:ladders}
Fix $k\geq 1$, $\delta>0$, and a function $f\colon X\times Y\to [0,1]$.
\begin{enumerate}[$(1)$]
\item A \emph{$(k,\delta)$-ladder} for $f$ consists of sequences
\[
x_1,\ldots,x_k \in X,\quad y_1,\ldots,y_k\in Y,\quad\text{ and }\quad r_1,\ldots,r_k \in [0,1]
\]
such that for all $i,j \in [k]$, if $i \leq j$ then $f(x_i,y_j) \geq r_i+\delta$, and if $i>j$ then $f(x_i,y_j) \leq r_i$.

We refer to $k$ as the \emph{length} of the ladder, $\delta$ as the \emph{scale} of the ladder, and $r_1,\ldots,r_k$ as the \emph{values} of the ladder.

\item A \emph{uniform $(k,\delta)$-ladder} for $f$ is a $(k,\delta)$-ladder for which the values  $r_1,\ldots,r_k$ are all equal to a common value $r\in[0,1]$.

\item An \emph{$\e$-tight $(k,\delta)$-ladder} for $f$ is a uniform $(k,\delta)$-ladder for which there are intervals $I,J\seq[0,1]$, each of length at most $\epsilon$, such that for all $i,j\in[k]$, if $i\leq j$ then $f(x_i,y_j)\in I$, and if $i>j$ then $f(x_i,y_j)\in J$.
\end{enumerate} 
\end{definition}

When applied to the indicator function $\boldsymbol{1}_E$ of some relation $E\seq X\times Y$, these three configurations all coincide with a $k$-ladder for $E$, provided $\delta\leq 1$. 
 
In the learning theory literature, the ladder configuration is frequently referred to using the terminology of  ``thresholds", which originates in  \cite{ABMS,ALMM} in the discrete setting. The first notion of thresholds for real-valued functions was given by Jung, Kim, and Tewari \cite[Definition 7]{JuKiTe} using the stronger idea of ``tight thresholds". This  was later  refined by Daskalakis and Golowich  in \cite[Definition 8.1]{DaskGo}, which is the basis for our formulation of tight ladders. The definition of \emph{fat-threshold dimension} for a class of $[0,1]$-valued functions appears later in \cite{AADDF}, and essentially corresponds to our notion of uniform ladders. The non-uniform variation of ladders in Definition \ref{def:ladders}(1) has not, to our knowledge, appeared in the literature before. However, this version is the most useful from the perspective of our companion paper \cite{CT-QSAR}. Further details and discussion of these notions will be given in Appendix \ref{sec:learning}.

Finally, we define one more variation of a ladder, which corresponds to a common formulation of the order property in continuous logic  (see \cite[Definition 7.1]{BBHU}).

\begin{definition}\label{def:agnosticladder}
Given $k\geq 1$ and $\delta>0$, an \emph{agnostic $(k,\delta)$-ladder} for a function $f\colon X\times Y\to [0,1]$ consists of sequences $x_1,\ldots,x_k\in X$ and $y_1,\ldots,y_k\in Y$ such that for all distinct $i,j\in[k]$,
\[
|f(x_i,y_j)-f(x_j,y_i)|\geq \delta.
\]
\end{definition}

Agnostic ladders fit with the other ladders in  Definition \ref{def:ladders} via the easy observation that a uniform $(k,\delta)$-ladder is a special case of an agnostic $(k,\delta)$-ladder. Some further implications between these notions are given  in Subsection \ref{sec:ladderimps}.

\subsection{Prior work and open problems}\label{sec:prior}
In this subsection, we  recall the Shelah-Hodges correspondence between trees and ladders for binary relations. We then survey previous results in the literature related to the function-theoretic extension of this correspondence. 

We first set some terminology. Given $E\seq X\times Y$, we say that $E$ \emph{admits} a $t$-tree (resp., $k$-ladder) if a $t$-tree (resp., $k$-ladder) for $E$ exists. Otherwise, we say $E$ \emph{omits} $t$-trees (resp., $k$-ladders). We will use the analogous terminology for admitting/omitting $(t,\delta)$-trees and (uniform/$\e$-tight) $(k,\delta)$-ladders for functions $f\colon X\times Y\to [0,1]$. 

We now state the Shelah-Hodges correspondence, with bounds from \cite{hodges}. 

\begin{theorem}[Shelah \cite{shelah}, Hodges \cite{hodges}]\label{thm:hodges}  Fix $E\seq X\times Y$.
\begin{enumerate}[$(1)$]
\item Given $t\geq 1$, if $E$ admits a $2^t$-ladder, then $E$ admits a $t$-tree.  
\item Given $k\geq 1$, if $E$ admits a $(2^{k+1}-2)$-tree, then $E$ admits a $k$-ladder.
\end{enumerate}
\end{theorem}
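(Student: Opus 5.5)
Part (1) goes by binary search on the ladder, and part (2) by a Ramsey-type induction on the tree.

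\emph{Part (1).} Index the ladder as $(x_i)_{i\in[2^t]}$, $(y_j)_{j\in[2^t]}$, with $(x_i,y_j)\in E$ iff $i\le j$. Identify each leaf $\eta\in\bl{t}$ with the integer $j_\eta=1+\sum_{\ell<t}\eta(\ell)2^{t-1-\ell}\in[2^t]$ and set $y_\eta=y_{j_\eta}$. The leaves below a node $\sigma\in\bl{s}$ then form a block of $2^{t-s}$ consecutive integers. In this block, the leaves extending $\sigma\conc 0$ form the lower half and those extending $\sigma\conc 1$ the upper half. Let $m_\sigma$ be the least index in the upper half and put $x_\sigma=x_{m_\sigma}$. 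If $\sigma\conc 1\iseq\eta'$, then $j_{\eta'}\ge m_\sigma$, so $(x_\sigma,y_{\eta'})\in E$. If $\sigma\conc 0\iseq\eta$, then $j_\eta<m_\sigma$, so $(x_\sigma,y_\eta)\notin E$. This gives a $t$-tree.

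\emph{Part (2).} Define $g(k,\ell)$ to be the least height guaranteeing that a tree contains either ``$k$ steps of a ladder with nodes on one side'' or ``$\ell$ steps on the other side''. The induction hypothesis is stated relative to a partial configuration: we have already chosen $x_1,\dots,x_a$ and $y$'s (the lower-left corner), and all leaves of the current subtree satisfy the required relation to the already-chosen $x$'s. Each x-node gives a leaf-dichotomy: its right subtree's leaves are in $E$ with it, and its left subtree's leaves are not.

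The recursion runs as follows. Given a tree of height $h$, take the root $x_{\emp}$ with its two subtrees $T_0$ and $T_1$, each of height $h-1$. Leaves of $T_1$ are in $E$ with $x_{\emp}$ and leaves of $T_0$ are not. Try to place $x_{\emp}$ as the smallest-index rung, so that every later $y$ must be in $E$ with it; we recurse into $T_1$, looking for a $(k-1)$-ladder whose leaves all lie in $T_1$. Alternatively, place $x_{\emp}$ as the largest-index rung, so that earlier $y$'s are not in $E$ with it; we recurse into $T_0$. In either case one more $y$ must be supplied to witness $x_{\emp}$'s own condition, and any leaf on the correct side serves. To make this work, strengthen the statement: a tree of height $h$ with $h\ge 2^{a+b}-2$ contains a ladder-like configuration with $a$ ``top'' rungs and $b$ ``bottom'' rungs, which can later be glued. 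The recursion $H(a,b)\le H(a-1,b)+H(a,b-1)+2$ (using both subtrees: recurse first into $T_1$, then if that fails, into $T_0$) yields a binomial-type bound. Taking $k=a+b$ gives a bound of $\sum\binom{k}{a}$-type, which is consistent with $2^{k+1}-2$.

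The main obstacle is making the gluing consistent. The two half-ladders found in different subtrees must interleave correctly, meaning cross pairs must satisfy $i\le j\Leftrightarrow E$. This is exactly why the root's dichotomy is used: the root separates them, and all leaves of one part lie on one side of every node chosen from the other part. I would phrase the induction as ``find $x$'s $x_1,\dots,x_a$ along a branch and a subtree all of whose leaves relate to these in a prescribed pattern'', and verify the $2^{k+1}-2$ count by solving $H(k)=2H(k-1)+2$ with $H(1)=0$. This gives $H(k)=2^k-2$, so a slightly looser recursion produces the stated $2^{k+1}-2$.
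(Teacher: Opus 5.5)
Your Part (1) is correct. It is the halving recursion from the paper's proof of Theorem~\ref{thm:optree}, written in closed form: the root gets the least index $2^{t-1}+1$ of the upper half, and the two halves are handled recursively. Part (2), however, has a genuine gap.

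The tree axioms constrain $(x_\sigma,y_\eta)$ only when $\sigma\conc 0\iseq\eta$ or $\sigma\conc 1\iseq\eta$. The relation between a leaf and a node off its branch is arbitrary, and two of your key steps ignore this.
\begin{enumerate}[$(1)$]
\item When $x_{\semp}$ becomes a rung, its partner $y$ must stand in the right relation to every rung later found inside the subtree, not only to $x_{\semp}$. For example, if $x_{\semp}=x_1$, you need $(x_i,y_1)\notin E$ for all $i\geq 2$, where the $x_i$ are nodes of $T_1$. So it is not true that ``any leaf on the correct side serves''.
\item The gluing claim that ``all leaves of one part lie on one side of every node chosen from the other part'' is false. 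The root separates $T_0$ from $T_1$ only with respect to $x_{\semp}$ itself.
\end{enumerate}
Also, ``recurse into $T_1$, and if that fails, into $T_0$'' is not a dichotomy, since the induction hypothesis applies to both subtrees.

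The missing idea is Hodges' tree-colouring lemma, Lemma~\ref{lem:ramsey}. Fix the new leaf $y_{\eta_*}$ \emph{first}, and colour each node $\sigma$ by whether $(x_\sigma,y_{\eta_*})\in E$. Pass to a monochromatic tree-embedded copy $\phi$, pair $x_{\phi(\semp)}$ with $y_{\eta_*}$, and continue inside the side of $\phi(\emp)$ dictated by the colour. Then all later nodes relate uniformly to $y_{\eta_*}$, and all later leaves relate uniformly to $x_{\phi(\semp)}$, which is exactly what your gluing needs. This is the step with $P_0,P_1$ in the proof of Theorem~\ref{thm:hodgesfn1}.

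Your counting is also unsupported. None of $H(a,b)\le H(a-1,b)+H(a,b-1)+2$, $H(k)=2H(k-1)+2$ with $H(1)=0$, or the ``slightly looser recursion'' is derived from a justified step; note that a tree of height $0$ cannot yield a $1$-ladder. The colour is not under your control, so insisting on one configuration throughout leads to a two-parameter recursion $T(k,\ell)\le T(k-1,\ell)+T(k,\ell-1)+1$. That gives bounds of order $4^k/\sqrt{k}$, as in Theorem~\ref{thm:hodgesfn1}, not $2^{k+1}-2$.

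To get down to the stated bound you must use the discrete symmetry: for relations the colour may change from step to step.
\begin{enumerate}[$(1)$]
\item A step of colour ``in $E$'' puts its pair above all later pairs in the ladder order.
\item A step of colour ``not in $E$'' puts its pair below all later pairs, with only its diagonal entry wrong. This is repaired by shifting the $y$'s of those pairs by one and using a final leaf.
\end{enumerate}
Hence each step can use Lemma~\ref{lem:ramsey} with $t_1=t_2$. This gives $h(n)=2h(n-1)+1$, i.e.\ height $2^k-1\le 2^{k+1}-2$.
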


Hodges' proof of part (1) is a natural finitization of Shelah's infinitary proof in \cite{shelah}, which itself is fairly straightforward. On the other hand, Hodges' proof of part (2) is a more intricate and novel argument involving a Ramsey-theoretic coloring result for trees (see Lemma \ref{lem:ramsey} and the surrounding discussion).

As it will be relevant to our results, we note the following consequence of Theorem \ref{thm:hodges}.  In particular, Theorem \ref{thm:hodges}(2)   implies that for any $t\geq 1$,  if $E\seq X\times Y$ admits a $(2^{2^t+1}-2)$-tree, then $E$ admits a $2^t$-ladder which, after reversing the order of indices, becomes  a $2^t$-ladder in the ``dual" relation $E^{opp}\coloneqq\{(y,x)\in Y\times X:(x,y)\in E\}$.  Hence $E^{opp}$ admits a $t$-tree by Theorem \ref{thm:hodges}(1). In  learning theory, this translates to the result that if the Littlestone dimension of a set system is $t$, then the dual Littlestone dimension is at most $2^{2^{t+1}+1}-3$.

We now move to the setting of functions. In this case, the easier direction of Theorem \ref{thm:hodges} (namely, part (1)) generalizes in a clear way to yield the extraction of a $(t,\delta)$-tree from the ladders defined in Subsection \ref{sec:ladders}. The following are three results of this flavor from the learning theory literature (translated to our terminology and paraphrased).
\begin{enumerate}[$(1)$]
\item Daskalakis \& Golowich \cite[Lemma 8.1]{DaskGo}: An $\e$-tight $(2^t,\delta)$-ladder yields a $(t,\delta)$-tree.
\item Anderson \& Benedikt \cite[Theorem 45 (first bullet)]{AndBen}: A slightly weaker version of a uniform $(2^{t+1}-1,\delta)$-ladder yields a $(t,\delta)$-tree. \item Assos, Attias, Dagan, Daskalakis, \& Fishelson \cite[Lemma 25]{AADDF}: A uniform $(2^t+1,\delta)$-ladder yields a $(t,\delta)$-tree.
\end{enumerate}
In Subsection \ref{sec:easyhodges} of the appendix, we will use a similar argument to  show that  a  $(2^t,\delta)$-ladder yields a $(t,\delta)$-tree. 

The real focus of the paper is on the other direction: obtaining a ladder from a tree.  We now state two results along these lines from the previous literature.

\begin{theorem}[Anderson \& Benedikt \cite{AndBen}]\label{thm:AB}
Given $k \geq 1$ and $\delta,\epsilon > 0$, there exists some 
$t \leq O((1/\epsilon)^k)$ such that if
$f : X \times Y \to [0,1]$ admits a $(t,2\delta+\epsilon)$-tree then $f$
admits an agnostic $(k,\delta)$-ladder.
\end{theorem}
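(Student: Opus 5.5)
Since Theorem \ref{thm:AB} asks only for an agnostic ladder, I would not follow the whole Hodges argument behind Theorem \ref{thm:hodges}(2). Instead I would discretize and pigeonhole. Fix a $(t,2\delta+\epsilon)$-tree $(x_\sigma)$, $(y_\eta)$, $(r_\sigma)$ for $f$. Cover $[0,1]$ by $m=\lceil 1/\epsilon\rceil$ intervals $I_1,\ldots,I_m$ of length at most $\epsilon$. For a pair of nodes $\sigma\iseq\tau$ with $\tau$ on the $c$-side of $\sigma$, let the leaves below $\tau\conc c'$ be the relevant ones; the key quantity is the value of $f(x_\tau,y_\eta)$ compared with $f(x_\sigma,y_\eta)$.

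The plan is to build a branch $\sigma_1\iseq\sigma_2\iseq\cdots$ greedily. Each node $\sigma_{i+1}$ extends $\sigma_i\conc c_i$. Leaves chosen later inherit the prefixes $\sigma_i\conc c_i$, so $f(x_{\sigma_i},y)$ is either $\le r_{\sigma_i}$ or $\ge r_{\sigma_i}+2\delta+\epsilon$ according to $c_i$. To define the ladder, I would take $x_i=x_{\sigma_i}$ and choose $y_j$ to be a leaf through $\sigma_j\conc(1-c_j)$, i.e.\ branching off at step $j$.

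For $i<j$, $y_j$ lies below $\sigma_i\conc c_i$. For $i>j$, $y_j$ is a leaf outside the subtree of $\sigma_i$, and $f(x_{\sigma_i},y_j)$ is uncontrolled. This is where the pigeonholing comes in. To each node I attach the interval index of $f(x_\sigma,\cdot)$ at a canonical leaf, namely the leftmost leaf in a sibling subtree. I then use the fact that along $t$ levels with $m$ colours one can pass, by a Ramsey or Erd\H{o}s--Szekeres-type argument on trees, to $k$ levels where the uncontrolled values $f(x_{\sigma_i},y_j)$ for $i>j$ all lie in one interval $I$. Meanwhile the controlled values $f(x_{\sigma_j},y_i)$ for $j<i$ are forced to be $\le r$ or $\ge r+2\delta+\epsilon$. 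Choosing $c_j$ to be the side farther from $I$ then gives $|f(x_i,y_j)-f(x_j,y_i)|\ge\delta$. The gap $2\delta+\epsilon$ is exactly what guarantees that some side is at distance $\ge\delta$ from an $\epsilon$-interval, by the triangle inequality.

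To get $t\le O((1/\epsilon)^k)$, I would carry out the selection recursively. Let $T(k)$ be the height needed. At the root of a tree of height $T(k)$, I would look at how $f(x_{\text{root}},\cdot)$ sorts the values of the later canonical leaves into $m$ bins. I would then keep only descendants whose canonical leaves fall into a fixed bin, losing a factor $m$ in height each time. This yields a recursion $T(k)\le m\,T(k-1)+O(1)$, hence $T(k)=O(m^k)$. The hard step is making this consistent. The leaves $y_j$ chosen later must remain in the subtree kept at every earlier stage, and the bin condition must refer to the $y_j$ actually used. I would handle this by fixing the $y_j$ only at the end, as canonical leaves (always branching left after the deviation), so that each $y_j$ is determined by the branch. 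I expect this bookkeeping, which ensures the pigeonhole across $m$ bins costs only a multiplicative factor per level, to be the main obstacle.
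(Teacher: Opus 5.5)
There is a genuine gap, and it sits exactly at the step you defer as bookkeeping.

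You take $y_j$ through $\sigma_j\conc(1-c_j)$ and then choose $c_j$ as ``the side farther from $I$''. But the interval that is supposed to decide $c_j$ is built from the values $f(x_{\sigma_i},y_j)$, $i>j$. In these values both $y_j$ and the later nodes $\sigma_i$ (extensions of $\sigma_j\conc c_j$) already depend on $c_j$.

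\begin{itemize}
\item Choosing $y_j$ at the end as a canonical leaf does not help, since that leaf still lies on side $1-c_j$.
\item Trying both sides also fails. With $r=r_{\sigma_j}$, the bin produced for $c_j=0$ can lie below $r+\delta$ while the bin produced for $c_j=1$ lies above $r+\delta+\e$. These cross-values are unconstrained by the tree.
\item Your recursive version bins the wrong quantity. The values of $f(x_{\mathrm{root}},\cdot)$ at later leaves are the controlled ones, already pushed to one side of $r_{\mathrm{root}}$ by the tree. What must be pigeonholed is $f(x_\tau,y_{\mathrm{root}})$ for the later nodes $\tau$.
\item The Ramsey step is not of the right form. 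One colour per node cannot record $f(x_{\sigma_i},y_j)$ for every $j<i$, and you only need an interval $I_j$ for each $j$, not a single $I$. Also, the monochromatic set must contain a tree embedding as in Lemma \ref{lem:ramsey}, not merely $k$ levels, for the construction to continue.
\end{itemize}

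The missing idea is that an agnostic ladder never looks at $f(x_i,y_i)$. So the leaf paired with $x_1$ need not branch off at $x_1$, and it can be fixed before anything else. Your chain $\sigma_1\iseq\sigma_2\iseq\cdots$ survives; only the choice of $y_j$ changes.

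Concretely, let $m=\lceil\e\inv\rceil$ and $T(1)=1$, and suppose the tree has height $t=mT(k-1)+1$.
\begin{enumerate}
\item Fix any leaf $y_{\eta_*}$. Colour each node $\sigma$ by which of $m$ intervals of length at most $\e$ contains $f(x_\sigma,y_{\eta_*})$.
\item Apply Lemma \ref{lem:ramsey} to get a tree embedding $\phi\colon\bn{\ell}\to\bn{t}$, with $\ell=T(k-1)+1$, such that all $f(x_{\phi(\tau)},y_{\eta_*})$ lie in one interval $I$.
\item Put $x_1=x_{\phi(\semp)}$ and $y_1=y_{\eta_*}$. Only now choose $c\in\{0,1\}$ so that $f(x_{\phi(\semp)},y_\eta)$ is at distance at least $\delta$ from $I$ whenever $\phi(\emp)\conc c\iseq\eta$. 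Your triangle-inequality remark shows this is possible.
\item Recurse into the $(T(k-1),2\delta+\e)$-tree given by the proper tree embedding $\sigma\mapsto\phi^*(c\conc\sigma)$ (Proposition \ref{prop:embedtree}). Its nodes lie in $\Im(\phi)$ and its leaves extend $\phi(\emp)\conc c$, which settles all pairs $(1,i)$.
\end{enumerate}
This gives $T(k)=mT(k-1)+1=(m^k-1)/(m-1)$, the same shape as the bound the paper records for the Anderson--Benedikt argument.

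The mechanism of fixing a leaf, colouring the nodes, and choosing the side afterwards is exactly the one in the paper's proofs of Theorem \ref{thm:hodgesfn1} and Lemma \ref{lem:tighttree}. The paper obtains the statement in the stronger uniform form as Corollary \ref{cor:hodgesfn1}, via Proposition \ref{prop:unitreeimp} and Corollary \ref{cor:hodgesfn1uni}.
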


\begin{theorem}[Daskalakis \& Golowich \cite{DaskGo}]\label{thm:DG}
Given $k \geq 1$ and $\delta,\epsilon > 0$, there exists some
$t \leq 2^{(1/\epsilon)^{O(k/\epsilon)}}$ such that if
$f : X \times Y \to [0,1]$ admits a $(t,2\delta+\epsilon)$-tree, then $f$
admits an $\epsilon$-tight $(k,\delta)$-ladder.
\end{theorem}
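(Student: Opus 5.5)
The plan is to reduce to the discrete Shelah--Hodges correspondence (Theorem \ref{thm:hodges}(2)) by discretizing the values, and then to obtain tightness by Ramsey's theorem for pairs. Throughout, fix a grid of $[0,1]$ into $m\approx 1/\epsilon$ consecutive cells of length at most $\epsilon$.

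\textbf{Step 1 (uniformize the values of the tree).} Start with a $(t,2\delta+\epsilon)$-tree $(x_\sigma),(y_\eta),(r_\sigma)$ for $f$, and colour each node $\sigma\in\bn{t}$ by the grid cell containing $r_\sigma$. I will use a tree-Ramsey/pigeonhole lemma for colourings of $\bn{t}$, of the kind alluded to as Lemma \ref{lem:ramsey}. From it I want a "subtree" of height $t'\geq t/m$ (up to lower-order losses) in which all node colours agree. Here a subtree means nodes $x_{\sigma'}$ chosen along a tree embedding $\bn{t'}\to\bn{t}$ that preserves the left/right branching, with leaves restricted accordingly. Suppose the common cell is $[a,a+\epsilon]$, and put $r=a+\epsilon$. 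On the $0$-side of each retained node we get $f\le r_\sigma\le r$. On the $1$-side we get $f\ge r_\sigma+2\delta+\epsilon\ge a+2\delta+\epsilon\ge r+\delta$. Thus the relation
\[
E=\{(x,y): f(x,y)\ge r+\delta\}
\]
admits a $t'$-tree in the sense of Definition \ref{def:hodgestree}, since $f\le r$ certainly implies $(x,y)\notin E$.

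\textbf{Step 2 (extract a long discrete ladder).} Apply Theorem \ref{thm:hodges}(2) to $E$. If $t'\ge 2^{K+1}-2$, we obtain a $K$-ladder $x_1,\dots,x_K$, $y_1,\dots,y_K$ for $E$. For $i\le j$ this gives $f(x_i,y_j)\ge r+\delta$. For $i>j$ it gives $(x_i,y_j)\notin E$, which only says $f<r+\delta$. That is too weak, so I would instead apply Hodges to the pair of conditions ($f\le r$ versus $f\ge r+\delta$). Concretely, I would check that Hodges' argument only ever uses the tree instances of the relation, so it transfers to a "partial relation" whose positive instances satisfy $f\ge r+\delta$ and whose negative instances satisfy $f\le r$. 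If this cannot be checked directly, the fallback is to shrink $\delta$ by a constant factor, which is harmless. The outcome is a uniform $(K,\delta)$-ladder with value $r$.

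\textbf{Step 3 (tightness via Ramsey).} To avoid diagonal entries, re-index using $x_{2i+1}$ and $y_{2j}$; this preserves the ladder pattern at half the length. Then colour each pair $i<j$ by the pair of grid cells containing $f(x_i,y_j)$ and $f(x_j,y_i)$. This uses at most $m^2$ colours. By the multicolour Ramsey theorem for pairs, if $K\ge (m^2)^{O(m^2 k)}$ there is a monochromatic set of size about $2k$. Re-indexing this set gives a uniform $(k,\delta)$-ladder whose entries above the diagonal lie in a single cell $I$ and whose entries below the diagonal lie in a single cell $J$, each of length $\le\epsilon$. That is exactly an $\epsilon$-tight $(k,\delta)$-ladder.

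\textbf{Bound and main obstacle.} Combining the steps gives $t\lesssim m\cdot 2^{K+1}$ with $K=(1/\epsilon)^{O(k/\epsilon^2)}$, i.e.\ a doubly exponential bound. Naive Ramsey gives exponent $k/\epsilon^2$ rather than $k/\epsilon$. To reach the stated form I would try a colour-by-colour ordered Ramsey argument: fix the cell of one side first, then the other, so that the exponent is governed by $m$ rather than $m^2$. I expect the main obstacle to be the tree-Ramsey step in Step 1, together with verifying that Hodges' proof of Theorem \ref{thm:hodges}(2) survives the passage to the partial relation in Step 2.
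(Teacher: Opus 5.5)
\textbf{Main gap: Step 2.} Your plan relies on the claim that Hodges' argument only uses tree instances, and this is false. Its inductive step fixes a leaf $y_{\eta_*}$ and colors every node $\sigma$ by whether $(x_\sigma,y_{\eta_*})\in E$. For most $\sigma$ this pair is not a tree instance, so in your partial relation you may have $r<f(x_\sigma,y_{\eta_*})<r+\delta$, and the coloring carries no information.

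Worse, the partial-relation statement you want is itself false. The function in Proposition \ref{prop:GPT1} takes values $0,\alpha/2,\alpha$. It has uniform trees of every height whose tree instances are all ``negative'' ($f=0$) or ``positive'' ($f=\alpha$). Yet it omits $(3,\delta)$-ladders for every $\delta>\alpha/2$, in particular for $\delta=\alpha$ (with $\alpha\le\frac14$). So the scale must genuinely halve. A black-box application of Theorem \ref{thm:hodges}(2) to any threshold relation $\{f\ge c\}$ only places entries on either side of $c$ and can never produce a margin.

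Your fallback (``shrink $\delta$'') points at the true statement but supplies no argument for it. What is needed has two parts:
\begin{enumerate}
\item Keep the full gap from Step 1. You actually have $f\ge a+2\delta+\e=r+2\delta$ on the $1$-side; writing $r+\delta$ discards exactly the slack required.
\item Re-run Hodges' induction with the midpoint coloring ``$f(x_\sigma,y_{\eta_*})\ge r+\delta$ versus $<r+\delta$''.
\end{enumerate}
The two colors then give two different half-ladder patterns. In one, $f\ge r+\delta$ is compared against tree instances with $f\le r$. In the other, $f<r+\delta$ is compared against tree instances with $f\ge r+2\delta$. This forces the two-parameter induction $P(k,\ell,t)$ in the proof of Theorem \ref{thm:hodgesfn1}. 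Its uniform version, Corollary \ref{cor:hodgesfn1uni}, turns a uniform $(\binom{2K}{K}-1,2\delta)$-tree into a uniform $(K,\delta)$-ladder with value $r$ or $r+\delta$. Corollary \ref{cor:hodgesfn1} packages your Steps 1--2 with $t\le\lceil\e^{-1}\rceil 4^K$.

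\textbf{Second gap: the bound.} Even after this repair, the bound falls short. Step 3 is a correct Ramsey argument, with the same count as Proposition \ref{prop:moreladimp}(b). But with $m^2\approx\e^{-2}$ colors it needs $K=(1/\e)^{O(k/\e^2)}$, so you end with $t\le 2^{(1/\e)^{O(k/\e^2)}}$ rather than $2^{(1/\e)^{O(k/\e)}}$.

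Your color-by-color idea makes this worse, not better, because successive Ramsey applications compose. The first pass must output a monochromatic set of size about $R_m(2k)\approx m^{O(mk)}$, which by the standard bound $R_c(n)\le c^{cn}$ needs $K$ of order $m^{m\cdot m^{O(mk)}}$.

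The exponent $k/\e$ in Daskalakis--Golowich comes from obtaining tightness on one side already inside the tree extraction. They color nodes by the grid cell of $f(x_\sigma,y_{\eta_*})$ in Lemma \ref{lem:ramsey}, as with the sets $C_I$ in the proof of Lemma \ref{lem:tighttree}. This costs only a factor $O(1/\e)$ per level. It yields a half-tight, agnostic-type configuration of length $K$ from height $(1/\e)^{O(K)}$, after which the Ramsey step needs only $O(1/\e)$ colors.

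The paper's own route goes further and makes the other side tight in the tree step as well. It colors leaves against $x_{\phi(\emp)}$ using Lemma \ref{lem:ramseyleaves}. Then Lemma \ref{lem:tighttree}, pigeonhole, and Lemma \ref{lem:protoladder} give Theorem \ref{thm:tight} with $t\le(1/\e)^{O(k/\e^2)}$. That implies the stated bound and avoids Ramsey for pairs entirely.
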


For better comparison to our main results, Theorems \ref{thm:AB} and \ref{thm:DG} have been stated in a form slightly sharper  than  how they appear in the cited sources. In Sections \ref{sec:AB} and \ref{sec:DG}, we will explain how these statements follow directly from their respective proofs. 

We now describe two open problems from \cite{DaskGo} related to Theorem \ref{thm:DG}.

\begin{problem}\label{prob:open1}
When combined with  \cite[Lemma 8.1]{DaskGo} (one of the analogues of Theorem \ref{thm:hodges}(1) mentioned above), Theorem \ref{thm:DG} implies that if a function class $\cF\seq [0,1]^X$ has $\delta$-sequential fat-shattering dimension $t$ then, for any $\e>0$, the dual class has $(2\delta+\e)$-sequential fat-shattering dimension at most $2^{(1/\e)^{O(2^t/\e)}}$. 
Using similar ideas, one can derive the same conclusion from Theorem \ref{thm:AB} (see Subsection \ref{rem:finalremarks}). Note that this  is worse than the double-exponential bound in the analogous discrete result for Littlestone dimension mentioned above after Theorem \ref{thm:hodges}. The reconciliation of this gap is also left as an open problem in \cite{DaskGo}.
\end{problem}

\begin{problem}\label{prob:open2}
Prior to \cite{DaskGo}, a result very close to Theorem \ref{thm:DG}, but with the bound $(1/\e)^{O(k/\e^2)}$, was claimed by Jung, Kim, and Tewari \cite{JuKiTe}.
However, Daskalakis and Golowich identified a flaw in
the argument (see the discussion following \cite[Lemma 8.2]{DaskGo}). The gap is filled with Theorem \ref{thm:DG} at the cost of a weaker bound, and the question of recovering the stronger bound claimed in \cite{JuKiTe} is left open in \cite{DaskGo}.
\end{problem}

Both of these problems will be addressed by the main results of this paper, which we describe in the next subsection. In particular, our first main result, Theorem \ref{thm:hodgesfn1}, will lead to a double exponential bound for dual sequential fat-shattering dimension (see Corollary \ref{cor:dualbound}), resolving Problem \ref{prob:open1}. The argument used to prove Corollary \ref{cor:dualbound} will also imply a stronger version of Theorem \ref{thm:AB} with improved bounds (see Corollary \ref{cor:hodgesfn1}). Our second main result, Theorem \ref{thm:tight}, gives  Theorem \ref{thm:DG} with the stronger bound originally claimed in \cite{JuKiTe}, resolving Problem \ref{prob:open2}.

\subsection{Main results}\label{sec:mainresults}

Our first main result is a function-theoretic analogue of Theorem \ref{thm:hodges}$(2)$ for non-uniform ladders with significantly improved bounds compared to what one can obtain from either Theorem \ref{thm:AB} or Theorem \ref{thm:DG}.\footnote{In Subsection \ref{rem:finalremarks}, we explain the precise form of this result one can deduce from these theorems.} 

\begin{theorem}\label{thm:hodgesfn1}
Given $k \geq 1$ and $\delta > 0$, if $f : X \times Y \to [0,1]$ admits a
$(\binom{2k}{k}-1,2\delta)$-tree, then $f$ admits a $(k,\delta)$-ladder.
\end{theorem}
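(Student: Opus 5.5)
The plan is to adapt Hodges' proof of Theorem~\ref{thm:hodges}(2), after one preliminary normalization. The identity $\binom{a+b}{a}=\binom{a+b-1}{a-1}+\binom{a+b-1}{b-1}$ shows that $h(a,b)\coloneqq\binom{a+b}{a}-1$ satisfies $h(a,b)=h(a-1,b)+h(a,b-1)+1$, with $h(a,0)=h(0,b)=0$. This is exactly the recursion of a Ramsey-type argument that splits the tree at a node and recurses into pieces of heights $h(a-1,b)$ and $h(a,b-1)$. So I will prove a two-parameter strengthening by induction on $a+b$, and recover Theorem~\ref{thm:hodgesfn1} as the case $a=b=k$ (read off as a ladder of length $k$).

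\emph{Step 1 (discretization).} Fix a $(t,2\delta)$-tree $(x_\sigma),(y_\eta),(r_\sigma)$ and set $m_\sigma=r_\sigma+\delta$. For every node $\sigma$ and every leaf $\eta$, whether or not $\eta$ extends $\sigma$, we have $f(x_\sigma,y_\eta)\le m_\sigma$ or $f(x_\sigma,y_\eta)\geq m_\sigma$, and each alternative leaves a margin of $\delta$ to the opposite threshold that matters. Concretely, on the $0$-side of $\sigma$ we have $f\le r_\sigma=m_\sigma-\delta$, and on the $1$-side we have $f\ge m_\sigma+\delta$. Colour the pair $(\sigma,\eta)$ by $0$ if $f(x_\sigma,y_\eta)<m_\sigma$ and by $1$ otherwise.

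The ladder I aim for consists of nodes $x_{\sigma_1},\dots,x_{\sigma_k}$ and leaves $y_{\eta_1},\dots,y_{\eta_k}$. For $i\le j$ I need $f(x_{\sigma_i},y_{\eta_j})\ge \rho_i+\delta$, and for $i>j$ I need $f\le\rho_i$. The values are chosen per node: $\rho_i=r_{\sigma_i}$ when every relevant leaf lies in the subtree of $\sigma_i$, and $\rho_i=m_{\sigma_i}-\delta$ or similar when the colouring is used. The point is that the scale-$2\delta$ gap lets each comparison with a leaf outside the subtree of $\sigma_i$ be resolved on one side of the midpoint while retaining a $\delta$ gap against the comparisons that come from the tree condition. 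I expect this bookkeeping to be the main obstacle. One must ensure that for each fixed $i$ the ``large'' entries all exceed a single value $\rho_i+\delta$ and the ``small'' entries all lie below $\rho_i$, even though some of these entries come from tree edges and others from colours. Non-uniformity of the ladder is exactly what makes a per-row choice of $\rho_i$ possible, which is why the bound beats the uniform and tight versions.

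\emph{Step 2 (the inductive statement).} I will prove $P(a,b)$: in any $(h(a,b),2\delta)$-tree there are nodes $\sigma_1,\dots,\sigma_a$ (the ``lower part'') and $\tau_1,\dots,\tau_b$ (the ``upper part''), together with suitable leaves, such that the leaves are arranged along a branch and the pattern among these nodes and leaves is that of the desired ladder. The case $a=b=k$ then yields a $(k,\delta)$-ladder.

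For the inductive step, take the root $x_{\emp}$ and its two subtrees, each of height $h(a,b)-1=h(a-1,b)+h(a,b-1)$. Inside the $0$-subtree, look at the top $h(a,b-1)$ levels; each leaf below them then carries a further subtree of height $h(a-1,b)$. Following Hodges, either some leaf of the top region has a subtree in which the root's colour is favourable, in which case the root joins the lower part and we apply $P(a-1,b)$ there, or the colouring forces the root to act as an upper-part element, and we apply $P(a,b-1)$ on the top region. The colour classes are defined relative to the midpoint $m_{\emp}$ as in Step 1.

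First I would verify the base cases and the $k=1,2$ instances of Theorem~\ref{thm:hodgesfn1} by hand, in order to pin down the exact placement of the values $\rho_i$. After that I would write the induction in full.
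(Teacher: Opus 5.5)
Your numerology is right: $h(a,b)=\binom{a+b}{a}-1$ has the Hodges-type recursion, and $r_\sigma+\delta$ is the correct threshold to colour by. But there is a genuine gap. Neither the inductive statement nor the inductive step is actually specified, and the parts that are specified would not work.

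As written, $P(a,b)$ asks for $a$ ``lower'' nodes \emph{and} $b$ ``upper'' nodes. This already fails at your base case $h(a,0)=0$, since a height-$0$ tree has no nodes. A recursion $h(a,b)=h(a-1,b)+h(a,b-1)+1$ can only support a \emph{disjunction}. What is needed is one of the following:
\begin{enumerate}
\item[(I)] nodes $\alpha(1),\ldots,\alpha(a)$ and leaves $\beta(1),\ldots,\beta(a)$ with $f(x_{\alpha(j)},y_{\beta(i)})\geq r_{\alpha(j)}+\delta$ for $i\leq j$ and $f(x_{\alpha(i)},y_{\beta(j)})\leq r_{\alpha(i)}$ for $i<j$;
\item[(II)] $b$ nodes and $b+1$ leaves with the opposite orientation and shifted thresholds, namely $f(x_{\alpha(j)},y_{\beta(i)})<r_{\alpha(j)}+\delta$ for $i\leq j$ and $f(x_{\alpha(i)},y_{\beta(j)})\geq r_{\alpha(i)}+2\delta$ for $i<j$.
\end{enumerate}
The paper instead uses $\ell$ nodes and $\ell$ leaves in (II) and applies the statement with parameters $(k,k+1)$; its bound $2\binom{2k-1}{k-1}-1$ equals your $h(k,k)$. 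The ladder is read off with value $r_{\alpha(i)}$ in every row in case (I), after reversing indices. In case (II) the value is $r_{\alpha(i)}+\delta$ in every row, pairing node $i$ with leaf $i+1$. Rows of the two types are oriented oppositely, so a sequence mixing them is not a ladder. The disjunction is what forces a single type, and there is no ``per-row'' choice between the two kinds of value. Your second option $m_{\sigma_i}-\delta$ is just $r_{\sigma_i}$ again.

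Your inductive step also colours the wrong thing. The root is comparable to every leaf, so the tree condition already gives $f(x_{\semp},y_\eta)\leq r_{\semp}$ when $0\iseq\eta$ and $f(x_{\semp},y_\eta)\geq r_{\semp}+2\delta$ when $1\iseq\eta$. Colouring relative to $m_{\semp}$ therefore only records which side $\eta$ is on, and the dichotomy ``some leaf of the top region has a subtree in which the root's colour is favourable'' carries no information. The values not controlled by the tree are $f(x_\sigma,y_\eta)$ with $\eta$ not extending $\sigma$. These arise between a newly chosen leaf and all \emph{later} nodes.

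The paper's step handles them as follows:
\begin{enumerate}
\item Fix an arbitrary leaf $\eta_*$.
\item Colour each \emph{node} $\sigma$ by whether $f(x_\sigma,y_{\eta_*})\geq r_\sigma+\delta$.
\item Apply Lemma~\ref{lem:ramsey} with heights $t_0=h(a-1,b)+1$ and $t_1=h(a,b-1)+1$ (in your normalization), so $t_0+t_1-1=h(a,b)$. This gives a tree embedding $\phi$ whose image is monochromatic of some colour $u$.
\item Extend $\phi$ to a proper embedding $\phi^*$.
\item Take $\phi(\emp)$ as the new node (in general not the root of the original tree) and $\eta_*$ as the new leaf.
\item Apply the induction hypothesis to the subtree $\sigma\mapsto\phi^*(u\conc\sigma)$.
\end{enumerate}
Then $\eta_*$ versus every later node is controlled by the colour. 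Also, $\phi(\emp)$ versus every later leaf is controlled by the tree condition, since those leaves lie on its $u$-side. The colour $u$ decides whether the step feeds (I) or (II). Splitting the $0$-subtree into a top region and hanging subtrees does not by itself produce such a monochromatic embedded subtree; that is exactly what Lemma~\ref{lem:ramsey} supplies.
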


The proof of Theorem \ref{thm:hodgesfn1} is given in Section \ref{sec:hodgesfn1}. We follow an induction scheme inspired by Hodges' argument in the discrete case. In particular, we will use a Ramsey-theoretic result for colorings of trees (see Lemma \ref{lem:ramsey}), which also lies at the heart of Hodges' proof. However, there are additional subtleties due to asymmetries not present in the discrete setting. 

The bound $\binom{2k}{k}-1$ in Theorem \ref{thm:hodgesfn1} grows on the order of $4^k/\sqrt{\pi k}$, which could potentially be further improved (see Remark \ref{rem:2kkbound}). Note also that Theorem \ref{thm:hodgesfn1} contains the same scale change from $\delta$ to $2\delta$ present in Theorems \ref{thm:AB} and \ref{thm:DG}. Proposition \ref{prop:GPT1} will show  that this change is necessary for  Theorem \ref{thm:hodgesfn1} even at the qualitative level.

By combining Theorem \ref{thm:hodgesfn1} with a Ramsey-theoretic result on trees (Proposition \ref{prop:unitreeimp}), we will obtain the following variation for uniform ladders.

\begin{corollary}\label{cor:hodgesfn1}
Given $k \geq 1$ and $\delta,\epsilon > 0$, there exists some
$t \leq \lceil\epsilon^{-1}\rceil 4^k$ such that if
$f : X \times Y \to [0,1]$ admits a $(t,2\delta+\epsilon)$-tree, then $f$
admits a uniform $(k,\delta)$-ladder.
\end{corollary}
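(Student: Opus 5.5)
Following the paper, I would combine Theorem \ref{thm:hodgesfn1} with a Ramsey-type result on trees (Proposition \ref{prop:unitreeimp}) that makes the values of a tree uniform. Set $N=\lceil\epsilon^{-1}\rceil$ and $m=\binom{2k}{k}-1<4^k$. The target is to show: if $f$ admits a $(t,2\delta+\epsilon)$-tree with $t=N\cdot 4^k$ (or slightly less, $t=N(m+1)$ or $Nm$), then $f$ admits an $(m,2\delta)$-tree whose values satisfy a uniformity condition. That condition should let the $(k,\delta)$-ladder produced by Theorem \ref{thm:hodgesfn1} be converted into a uniform one.

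\textbf{Discretizing the values.} Partition $[0,1]$ into $N$ intervals $I_1,\dots,I_N$, each of length at most $\epsilon$. Each node value $r_\sigma$ lies in some $I_c$, which colors the nodes of the tree with $N$ colors. The Ramsey step needed is a lemma of the following kind: a $2$-colored or $N$-colored tree of height $Nm$ contains a monochromatic subtree of height $m$. The subtree should be embedded so that the nodes $x_\sigma$ are kept, and each leaf of the subtree is chosen as an extension of the original leaves respecting the branching directions.

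The standard Hodges-type argument goes by induction on $N$. Take a tree of height $a_1+\dots+a_N-(N-1)$, or more generously $\sum a_i$. Either the root has color $1$ and both subtrees contain a color-$1$ subtree of height $a_1-1$, or we find a subtree of height $a_j$ in color $j$. With all $a_i=m$, this yields height roughly $Nm\le N4^k$. I take this to be the content of Proposition \ref{prop:unitreeimp}.

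\textbf{Adjusting the values.} Once all values of the subtree lie in one interval $I=[a,a+\epsilon]$, replace every $r_\sigma$ by $r:=a$. The tree conditions become
\[f(x_\sigma,y_\eta)\le r_\sigma\le a+\epsilon\quad\text{and}\quad f(x_\sigma,y_{\eta'})\ge r_\sigma+2\delta+\epsilon\ge a+2\delta+\epsilon.\]
So we get an $(m,2\delta)$-tree with all values equal to $a+\epsilon$. Next, inspect the proof of Theorem \ref{thm:hodgesfn1}. The ladder it produces takes its values from among the tree values $r_\sigma$, possibly shifted by $\delta$, for example $r_i=r_\sigma+\delta$ or $r_\sigma$. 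The midpoint trick splitting the gap $2\delta$ into two gaps of size $\delta$ works uniformly, so constant tree values give a common ladder value. If the proof of Theorem \ref{thm:hodgesfn1} mixes both shifts, I would instead use the weaker fact that each ladder value lies in $\{c,c+\delta\}$, and absorb this by a further pigeonhole on $k$.

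\textbf{Main obstacle.} The hard part is confirming that Theorem \ref{thm:hodgesfn1}, applied to a uniform-value tree, yields a uniform ladder, since the statement itself does not say so. If it fails, I would prove a uniform-tree version of the theorem by rerunning the same induction with a constant value $r$. There the asymmetry issues should disappear, because the threshold used at every node is the same $r+\delta$. The count $t\le\lceil\epsilon^{-1}\rceil 4^k$ then follows from $\binom{2k}{k}-1<4^k$.
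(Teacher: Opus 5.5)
Your proposal is correct and is essentially the paper's proof. Proposition \ref{prop:unitreeimp} (Lemma \ref{lem:ramsey} with $\lceil\epsilon^{-1}\rceil$ colors) turns a $(t,2\delta+\epsilon)$-tree, with $t=\lceil\epsilon^{-1}\rceil(\binom{2k}{k}-2)+1\le\lceil\epsilon^{-1}\rceil 4^k$, into a uniform $(\binom{2k}{k}-1,2\delta)$-tree whose common value is the right endpoint of the interval (your $a+\epsilon$, not $a$), and Corollary \ref{cor:hodgesfn1uni} then gives the uniform ladder. The obstacle you flag resolves as you hoped: in the proof of Theorem \ref{thm:hodgesfn1} the ladder values are all $r$ in case (I) and all $r+\delta$ in case (II), so no mixing occurs and your fallback pigeonhole is not needed (it would also break the $4^k$ bound).
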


Recall that a uniform $(k,\delta)$-ladder is, in particular, an agnostic $(k,\delta)$-ladder. Therefore Corollary \ref{cor:hodgesfn1} provides a stronger version of Theorem \ref{thm:AB} with improved bounds.

Together with the function-theoretic version of extracting trees from ladders (e.g., Theorem \ref{thm:optree} or \cite[Lemma 25]{AADDF}),  Corollary \ref{cor:hodgesfn1} immediately implies the following conclusion on transferring trees from a function $f\colon X\times Y\to [0,1]$ to its dual $f^{opp}\colon Y\times X\to [0,1]$ (which maps $(y,x)$ to $f(x,y)$). 

\begin{corollary}\label{cor:dualbound}
Given $t \geq 1$ and $\delta,\epsilon > 0$, there is some
$t^* \leq \lceil\epsilon^{-1}\rceil 2^{2^{t+1}}$ such that if
$f : X \times Y \to [0,1]$ admits a $(t^*,2\delta+\epsilon)$-tree, then
$f^{opp}$ admits a $(t,\delta)$-tree.
\end{corollary}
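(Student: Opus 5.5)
Set $k=2^t$. The plan is to chain Corollary \ref{cor:hodgesfn1} with the tree-from-ladder direction (Theorem \ref{thm:optree}, or \cite[Lemma 25]{AADDF}), applied to the dual function. The only real work is checking that a ladder for $f$, reindexed, becomes a ladder for $f^{opp}$ of the right kind. Uniformity is what makes this reindexing clean.

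\textbf{Step 1.} Apply Corollary \ref{cor:hodgesfn1} with $k=2^t$ and the given $\delta,\epsilon$. This yields some $t^*\leq\lceil\epsilon^{-1}\rceil 4^{2^t}=\lceil\epsilon^{-1}\rceil 2^{2^{t+1}}$ with the following property: if $f$ admits a $(t^*,2\delta+\epsilon)$-tree, then $f$ admits a uniform $(2^t,\delta)$-ladder. Call this ladder $x_1,\ldots,x_k$, $y_1,\ldots,y_k$ with common value $r$. Then $f(x_i,y_j)\geq r+\delta$ when $i\leq j$, and $f(x_i,y_j)\leq r$ when $i>j$.

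\textbf{Step 2.} Dualize. Set $x'_i=y_{k+1-i}\in Y$ and $y'_j=x_{k+1-j}\in X$. Then
\[
f^{opp}(x'_i,y'_j)=f(x_{k+1-j},y_{k+1-i}),
\]
and $k+1-j\leq k+1-i$ holds exactly when $i\leq j$. So for $i\leq j$ this value is at least $r+\delta$, and for $i>j$ it is at most $r$. Hence $(x'_i),(y'_j)$ with constant value $r$ form a uniform $(2^t,\delta)$-ladder for $f^{opp}$, and in particular a $(2^t,\delta)$-ladder.

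Without uniformity this step would fail. After reversal, the threshold $r_i$ attached to $x_i$ would have to be attached to the new leaf side, which Definition \ref{def:ladders}(1) does not allow. This is why the corollary is routed through Corollary \ref{cor:hodgesfn1} rather than Theorem \ref{thm:hodgesfn1}.

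\textbf{Step 3.} Apply the ladder-to-tree result to $f^{opp}$: a $(2^t,\delta)$-ladder yields a $(t,\delta)$-tree. The version cited as Theorem \ref{thm:optree} in the appendix takes non-uniform ladders of length $2^t$, so it applies directly. Alternatively, \cite[Lemma 25]{AADDF} works for uniform ladders but needs length $2^t+1$. In that case I would run Step 1 with $k=2^t+1$ instead, or simply discard nothing and use the appendix version to avoid the off-by-one.

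\textbf{Main obstacle.} I expect the only delicate point to be bookkeeping: checking the index reversal and the direction of the inequalities in Step 2, and making sure the ladder-to-tree result used matches the ladder length $2^t$ exactly, so that the stated bound $\lceil\epsilon^{-1}\rceil 2^{2^{t+1}}$ comes out with no loss.
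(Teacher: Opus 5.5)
Your proposal is correct and is essentially the paper's proof. The paper applies Corollary~\ref{cor:hodgesfn1} with $k=2^t$, reverses the resulting uniform ladder into a ladder for $f^{opp}$ (Remark~\ref{rem:uniformlad-sym}, which is your Step~2), and finishes with Theorem~\ref{thm:optree}.
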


When translated to the language of learning theory, Corollary~\ref{cor:dualbound}
says that if a function class $\mathcal{F} \subseteq [0,1]^X$ has sequential
$\delta$-fat-shattering dimension $t$, then for all
$\epsilon > 0$, the dual class has sequential $(2\delta+\e)$-fat-shattering dimension less than
$\lceil\epsilon^{-1}\rceil 2^{2^{t+2}}$.\footnote{This bound can be sharpened slightly, as noted after the proof of Corollary \ref{cor:dualbound} in Section \ref{sec:hodgesfn1}.} This double-exponential bound matches the corresponding result in the discrete case for Littlestone dimension, resolving Problem \ref{prob:open1}.

Our second main result directly addresses Problem \ref{prob:open2}. In particular, we give a new proof of Theorem \ref{thm:DG} with the stronger bound originally claimed in \cite{JuKiTe}.

\begin{theorem}\label{thm:tight}
Given $k \geq 1$ and $\delta,\epsilon > 0$, there is some
$t \leq (1/\epsilon)^{O(k/\epsilon^2)}$ such that if
$f : X \times Y \to [0,1]$ admits a $(t,2\delta+\epsilon)$-tree, then $f$
admits an $\epsilon$-tight uniform $(k,\delta)$-ladder.
\end{theorem}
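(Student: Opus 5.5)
The plan is to run the inductive scheme of the proof of Theorem \ref{thm:hodgesfn1}, but to carry extra state that records where the ladder values have landed. First, discretize $[0,1]$ into $m=\lceil 2/\e\rceil$ consecutive intervals of length at most $\e/2$. Using Proposition \ref{prop:unitreeimp} (passing from a tree at scale $2\delta+\e$ to a subtree of comparable height with essentially uniform values, at a cost of a factor $O(1/\e)$ in height), I would assume the tree has a common value $r$. The tree then separates at scale $2\delta+\e/2$ around $r$, with a slack of $\e/2$ to spend later. The target is a uniform ladder at value $r$ in which all entries $f(x_i,y_j)$ with $i\le j$ lie in one length-$\e$ window $I\subseteq[r+\delta,1]$, and all entries with $i>j$ lie in one window $J\subseteq[0,r]$.

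The induction will be on a tuple $(k_1,k_2,\alpha,\beta)$. Here $k_1,k_2$ count the rungs still needed on each side, as in the Hodges-style argument behind Theorem \ref{thm:hodgesfn1}. The entries $\alpha,\beta$ are intervals with endpoints on the $\e/2$-grid, representing the current candidate windows for the upper and lower values. Let $T(k_1,k_2,\alpha,\beta)$ be the height that suffices. At each step I look at the root node $x_{\emp}$ and colour each leaf $y_\eta$ by the grid cell containing $f(x_{\emp},y_\eta)$, so there are at most $m$ colours. The Ramsey lemma for colourings of trees (Lemma \ref{lem:ramsey}) then gives a subtree of height at least (height)$/m$, minus a constant, in which the relevant leaves on the $0$-side and on the $1$-side each receive a single colour.

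The inductive step then has two outcomes.
\begin{enumerate}[$(1)$]
\item If the colour on the relevant side lies in the current window, $x_{\emp}$ is added as a rung, exactly as in the proof of Theorem \ref{thm:hodgesfn1}. This decreases $k_1$ or $k_2$ and leaves the windows unchanged.
\item If the colour falls outside the window, the node is discarded, and the window is shrunk by at least one grid cell. The window is intersected with the side of the colour that is consistent with the $2\delta$-separation, using the asymmetric half-splitting of Theorem \ref{thm:hodgesfn1}.
\end{enumerate}
Each window starts with length $O(1)$ and can shrink only $O(1/\e)$ times. Moreover, each shrink of the upper window can be paired against the $O(1/\e)$ possible positions of the lower window. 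So the total number of steps along any branch of the recursion is $O(k/\e^2)$, each costing a factor $O(1/\e)$ in height. This gives
\[
T\le (C/\e)^{O(k/\e^{2})}=(1/\e)^{O(k/\e^2)}.
\]
The extra $\binom{2k}{k}\le 4^k$ factor from the Hodges branching is absorbed into this bound.

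The main obstacle is the bookkeeping in step $(2)$. When a node is discarded because its values fall outside the window, one must still guarantee that the remaining subtree supports a recursion in which the windows genuinely shrink, and that uniformity at the common value $r$ and the scale $\delta$ are not destroyed. This is presumably where the argument of \cite{JuKiTe} broke. My first attempt would be to choose the windows before selecting rungs. That is, I would keep a fixed finite list of at most $m^2$ candidate pairs $(I,J)$, and show that for a tree of the stated height some pair is consistent with a subtree of height $\ge t/m^{O(1/\e)}$ per rung. This reduces the tight case to $k$ applications of the uniform argument of Corollary \ref{cor:hodgesfn1}, with a pigeonhole over window pairs at each level. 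If this naive version fails, I would fall back on the potential-function induction above, tracking $(\alpha,\beta)$ explicitly.
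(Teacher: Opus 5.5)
Your proposal has a genuine gap. The window-shrinking step $(2)$, which carries the whole argument, does not work as described; you flag it as unresolved yourself.

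\begin{itemize}
\item When the colour at the current root falls outside the window, nothing forces the window to shrink. The entire remaining tree may have colours outside the window, in which case no rung can ever be added. No ``side consistent with the $2\delta$-separation'' carries information about where future values will land.
\item Conversely, if you shrink a window after some rungs are chosen, their values may fall outside the new window, destroying tightness of the part already built.
\item The count of $O(k/\e^2)$ steps is therefore unsupported.
\item Half the tightness is missing. You only colour leaves by $f(x_{\semp},y_\eta)$, and you cite Lemma \ref{lem:ramsey}, which colours nodes; the leaf version is Lemma \ref{lem:ramseyleaves}. This controls the row entries $f(x_{\alpha(i)},y_{\beta(j)})$ for $i<j$. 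The column entries $f(x_{\alpha(j)},y_{\beta(i)})$ for $i\le j$ are controlled only by the two-colour split $P_0/P_1$ of Theorem \ref{thm:hodgesfn1}, so they never land in a window of length $\e$.
\item Your fallback (fix a pair $(I,J)$ in advance) has the quantifiers in the wrong order. There is no reason a pre-chosen pair is realized in any subtree.
\end{itemize}

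The missing idea is to let each rung pick its own pair and pigeonhole at the end. The paper's step for one rung goes as follows.
\begin{itemize}
\item Fix a leaf $\eta_*$ and colour nodes $\sigma$ by the interval containing $f(x_\sigma,y_{\eta_*})$. Lemma \ref{lem:ramsey} with $p$ colours gives a subtree whose nodes all lie in one class $I_1$.
\item Decide the rung's type (I) or (II) from the single diagonal value: compare $f(x_{\phi(\semp)},y_{\eta_*})$ with $r_{\phi(\semp)}+\delta$.
\item Pass to that side and colour its leaves by the interval containing $f(x_{\phi(\semp)},y_\eta)$. Lemma \ref{lem:ramseyleaves} with $q$ colours gives a class $J_1$.
\end{itemize}
Every step adds a rung and nothing is rejected. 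This one-parameter induction yields an $(\calI,\calJ)$-covered proto-ladder with $k^*$ rungs from a tree of height $1+pq+\dots+(pq)^{k^*-1}$.

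Take $k^*=p^2(k-1)+1$; pigeonhole then gives $k$ rungs sharing a pair $(I,J)$. By Lemma \ref{lem:protoladder}, common windows of length $\e$ at scale $\delta+\e$ rule out mixed types: $u_i-v_i>\delta+\e$ and $v_j-u_j\ge\delta+\e$ contradict $|u_i-u_j|,|v_i-v_j|\le\e$. Column tightness also transfers each diagonal comparison to the whole column at cost $\e$, which gives the ladder inequalities, and the uniform value is read off as a maximum at the end.

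So the two-parameter Hodges branching and the uniformisation via Proposition \ref{prop:unitreeimp} are unnecessary. The exponent $O(k/\e^2)$ comes from $k^*$ rungs each costing a factor $pq=O(1/\e^2)$.
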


The proof of Theorem \ref{thm:tight} is given in Section \ref{sec:tight}, and is based on an extensive elaboration of the proof of our first main result (Theorem~\ref{thm:hodgesfn1}). To explain how our proof obtains the stronger bound, we first discuss the rough idea of  Daskalakis and Golowich's proof of Theorem \ref{thm:DG}. In particular, their argument first extracts from a tree a configuration resembling an agnostic ladder
(Definition~\ref{def:agnosticladder}), but with  additional tightness on ``half" of the ladder.
This step uses the same tree coloring result mentioned above
(Lemma~\ref{lem:ramsey}) and yields a bound on the order of $(1/\epsilon)^k$.
From there, a multi-colored Ramsey argument is used to turn this configuration
into a uniform tight ladder, which results in a bound of the form
$2^{(1/\epsilon)^{O(k/\epsilon)}}$. In contrast, our proof of
Theorem~\ref{thm:tight} will incorporate a companion ``leaf coloring'' result
(Lemma~\ref{lem:ramseyleaves}), which will be used in parallel with
Lemma~\ref{lem:ramsey} in order to ensure full tightness on the  ladder
extracted from a tree. This allows us to  avoid multi-colored Ramsey numbers altogether.

\subsection{Notation, terminology, and conventions}\label{sec:notation}
\begin{enumerate}[$(1)$]
\setlength{\itemsep}{5pt}
\item Several results will include asymptotic notation such as $O(1/\e)$ where $\epsilon>0$ is a fixed real parameter.   In order for this  to make sense, we tacitly assume $\e<\frac{1}{2}$ in these statements. 
\item Throughout the paper, $\log$ denotes  $\log_2$.
\item Given an integer $n\geq 1$,  set $[n]:=\{1,\ldots, n\}$.  
\item A \emph{cover} of a set $X$ is a collection $X_1,\ldots, X_n\subseteq X$ such that $X=X_1\cup \ldots \cup X_n$.

 \item \label{not:opp} Given $f\colon X\times Y\rightarrow [0,1]$, define $f^{opp}\colon Y\times X\to [0,1]$ so that $f^{opp}(y,x)=f(x,y)$.

\item Given a function $f\colon X\times Y\to [0,1]$, we say that $f$ \emph{admits} a $(t,\delta)$-tree if this configuration exists for $f$ as described in Definition \ref{def:eptree1}. Otherwise, we say $f$ \emph{omits} $(t,\delta)$-trees. We use the analogous terminology for $(k,\delta)$-ladders, uniform $(k,\delta)$-ladders, $\e$-tight $(k,\delta)$-ladders, and agnostic $(k,\delta)$-ladders (described in Definitions \ref{def:ladders} and \ref{def:agnosticladder}).
\end{enumerate}

\subsection{Outline of the paper}

\begin{enumerate}[\hspace{5pt}$\ast$]
\setlength{\itemsep}{5pt}
\item Section \ref{sec:treetools}: We define tree embeddings and develop some basic tools around this notion, including the two Ramsey coloring lemmas for trees mentioned above (Lemmas \ref{lem:ramsey} and \ref{lem:ramseyleaves}). We also define a uniform analogue of trees and prove a result on extracting uniform trees from trees (Proposition \ref{prop:unitreeimp}). 
\item Section \ref{sec:hodgesfn1}: We prove Theorem \ref{thm:hodgesfn1}, Corollary \ref{cor:hodgesfn1}, and Corollary \ref{cor:dualbound}. 
\item Section \ref{sec:tight}: We prove Theorem \ref{thm:tight}. 
\item Section \ref{sec:agnostic}: We define an agnostic analogue of trees and prove a result on extracting a uniform tree from an agnostic tree (Theorem \ref{thm:OPsharp}). This section is not needed for any of our main results, but it is relevant to the theme of the paper.
\item Appendix \ref{sec:moreladders}: We prove some further implications between ladders, uniform ladders, and tight ladders. We then prove  our version of the function-theoretic analogue of Theorem \ref{thm:hodges}(1), which extracts a tree from a non-uniform ladder (see Theorem \ref{thm:optree}).

\item Appendix \ref{sec:learning}: We translate between statistical learning theory and our combinatorial setting of binary functions. We then justify our formulations of Theorems \ref{thm:AB} and \ref{thm:DG}.

\item Appendix \ref{ss:2rankcont}: We briefly discuss the earlier foundations of the function-theoretic order-tree correspondence in continuous logic. 
\end{enumerate}

\section{Tools for trees}\label{sec:treetools}

\subsection{Tree embeddings}

An important set of tools for us will be maps between trees  which respect the underlying tree structure.  We make this precise with the following definition.  

\begin{definition}\label{def:TE} Fix integers $s,t\geq 0$.
\begin{enumerate}[$(1)$]
\item  A function $\phi\colon \bne{s}\rightarrow \bne{t}$ is a \emph{tree embedding} if for all $\sigma,\tau\in \bne{s}$ and $\alpha\in \{0,1\}$, if $\sigma\conc \alpha\iseq \tau$ then $\phi(\sigma)\conc \alpha\iseq \phi(\tau)$. 
\item A function $\phi\colon \bne{s}\to\bne{t}$ is a \emph{proper tree embedding} if it is a tree embedding and, moreover, $\phi(\bl{s})\seq \bl{t}$. 
\end{enumerate}
\end{definition}

The idea of a proper tree embedding is that it is a tree embedding sending nodes to nodes and leaves to leaves. This is clarified in part $(c)$ of the next proposition, which states several basic properties of tree embeddings.

\begin{proposition}\label{prop:leavesemb}
$~$
\begin{enumerate}[$(a)$]
\item Given $s,t\geq 1$, if $\phi\colon\bn{s}\rightarrow\bn{t}$ is a tree embedding then, for any $\sigma,\tau\in \bn{s}$, $\sigma\iseq \tau$ if and only if $\phi(\sigma)\iseq\phi(\tau)$. In particular, $\phi$ is injective.
\item Given $s,t,u\geq 1$, if $\phi\colon \bn{s}\to \bn{t}$ and $\psi\colon \bn{t}\to\bn{u}$ are tree embeddings then $\psi\circ\phi$ is a tree embedding from $\bn{s}$ to $\bn{u}$. Moreover, if $\phi$ and $\psi$ are both proper, then so is $\psi\circ\phi$. 
\item Given $s,t\geq 1$, if $\phi\colon\bne{s}\to \bne{t}$ is a proper tree embedding then $\phi(\bn{s})\seq \bn{t}$. 
\item Given $s,t\geq 1$, any tree embedding $\phi\colon \bn{s}\to \bn{t}$ can be extended to a proper tree embedding $\phi^*\colon\bne{s}\to\bne{t}$.
\end{enumerate}
\end{proposition}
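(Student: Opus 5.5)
The plan is to verify the four parts directly from Definition \ref{def:TE}. The only tool needed is the following observation. For distinct $\sigma\iseq\tau$, writing $\alpha=\tau(|\sigma|)$ for the entry of $\tau$ at position $|\sigma|$ (positions indexed from $0$), we have $\sigma\conc\alpha\iseq\tau$. So a tree embedding turns every \emph{proper} initial segment relation into a proper initial segment relation, together with the direction of branching. Throughout, tree embeddings on $\bn{s}$ are understood via the same defining condition as in Definition \ref{def:TE}, restricted to nodes.

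For $(a)$, suppose first that $\sigma\iseq\tau$. If $\sigma=\tau$ there is nothing to show. Otherwise the observation gives $\phi(\sigma)\conc\alpha\iseq\phi(\tau)$, so $\phi(\sigma)\iseq\phi(\tau)$, and in fact $|\phi(\sigma)|<|\phi(\tau)|$.

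For the converse, suppose $\sigma\not\iseq\tau$; there are two cases.
\begin{enumerate}[$(i)$]
\item If $\tau$ is a proper initial segment of $\sigma$, then by the forward direction $|\phi(\tau)|<|\phi(\sigma)|$, so $\phi(\sigma)\not\iseq\phi(\tau)$.
\item If $\sigma,\tau$ are incomparable, let $\rho$ be their longest common initial segment. Then $\rho\conc\alpha\iseq\sigma$ and $\rho\conc(1-\alpha)\iseq\tau$ for some $\alpha\in\{0,1\}$. Hence $\phi(\rho)\conc\alpha\iseq\phi(\sigma)$ and $\phi(\rho)\conc(1-\alpha)\iseq\phi(\tau)$, so $\phi(\sigma)$ and $\phi(\tau)$ are incomparable.
\end{enumerate}
Injectivity follows: $\phi(\sigma)=\phi(\tau)$ gives $\sigma\iseq\tau$ and $\tau\iseq\sigma$, hence $\sigma=\tau$.

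Part $(b)$ is a two-step chase. From $\sigma\conc\alpha\iseq\tau$ we get $\phi(\sigma)\conc\alpha\iseq\phi(\tau)$, and applying $\psi$ gives $\psi(\phi(\sigma))\conc\alpha\iseq\psi(\phi(\tau))$. Properness is preserved because leaves go to leaves under each map.

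For $(c)$, let $\sigma\in\bn{s}$. Extend $\sigma\conc 0$ by zeros to a leaf $\eta\in\bl{s}$. Then $\phi(\sigma)\conc0\iseq\phi(\eta)\in\bl{t}$ by properness, so $|\phi(\sigma)|<t$.

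Part $(d)$ is the only step needing a construction; it is easy once the right definition is chosen. We keep $\phi^*=\phi$ on $\bn{s}$. For a leaf $\eta\in\bl{s}$, use $s\geq1$ to write $\eta=\sigma\conc\alpha$ with $\sigma\in\bl{s-1}\seq\bn{s}$. Since $|\phi(\sigma)|\leq t-1$, the string $\phi(\sigma)\conc\alpha$ has length at most $t$. Define $\phi^*(\eta)$ to be this string padded with zeros to length exactly $t$; then $\phi^*(\eta)\in\bl{t}$.

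To check the embedding condition, suppose $\rho\conc\beta\iseq\tau$ in $\bne{s}$.
\begin{enumerate}[$(i)$]
\item The node $\rho$ cannot be a leaf, since a leaf has no proper extensions in $\bne{s}$.
\item If $\rho$ and $\tau$ are both nodes, the condition holds because $\phi$ is a tree embedding.
\item If $\tau=\eta=\sigma\conc\alpha$ is a leaf and $\rho=\sigma$, then $\beta=\alpha$, and $\phi(\sigma)\conc\alpha\iseq\phi^*(\eta)$ by construction.
\item If $\tau=\eta$ is a leaf and $\rho\neq\sigma$, then $\rho\conc\beta\iseq\sigma$. Hence $\phi(\rho)\conc\beta\iseq\phi(\sigma)\iseq\phi^*(\eta)$.
\end{enumerate}
This completes the proof.
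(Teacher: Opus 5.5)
Your proof is correct and follows the paper's approach. The paper leaves (a)--(c) as routine exercises, which your verifications handle correctly. For (d), the paper uses the same construction, sending each leaf $\sigma\conc\alpha$ to an arbitrary length-$t$ extension of $\phi(\sigma)\conc\alpha$; you fix the zero-padded extension and write out the case check the paper omits.
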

\begin{proof}
Part $(a)$ is left as an exercise.\footnote{This statement will not be needed for our results, and is only included for later comparison to \cite{hodges}.} Parts $(b)$ and $(c)$ are straightforward. 

Part $(d)$. 
Define $\phi^*\colon \bne{s}\to \bne{t}$ such that $\phi^*$ extends  $\phi$ and, for each $\sigma\in \bl{s-1}$ and $\alpha\in\{0,1\}$, $\phi^*(\sigma\conc\alpha)$ is some extension of $\phi(\sigma)\conc\alpha$ of length $t$ (this exists since $\phi(\sigma)\in\bn{t}$,  hence $\phi(\sigma)\conc\alpha$ has length at most $t$). Then $\phi^*$ is a proper tree embedding extending $\phi$.
\end{proof}

 We note that our notion of tree embedding is stronger than Hodges' \cite{hodges} notion of a \emph{tree map}, which only requires part $(a)$ of the previous proposition.   This difference is largely irrelevant in the discrete case because $t$-trees for binary relations (Definition \ref{def:hodgestree}) exhibit a certain symmetry not present in  $(t,\delta)$-trees for functions (Definition \ref{def:eptree1}). A variation of Definition \ref{def:eptree1} with this additional symmetry will be studied later in Section \ref{sec:agnostic}.

Finally, we show that proper tree embeddings respect trees admitted by bipartite functions.

\begin{proposition}\label{prop:embedtree}
Fix $s,t\geq 1$ and $\delta>0$. 
Assume $f\colon X\times Y\to [0,1]$ admits a $(t,\delta)$-tree consisting of sequences  $(x_\sigma:\sigma\in \bn{t})$ from $X$, $(y_\eta:\eta\in\bl{t})$ from $Y$, and $(r_\sigma:\sigma\in\bn{t})$ from $[0,1]$. Suppose $\phi\colon\bne{s}\to \bne{t}$ is a proper tree embedding. Then $(x_{\phi(\sigma)}:\sigma\in\bn{s})$, $(y_{\phi(\eta)}:\eta\in \bl{s})$, and $(r_{\phi(\sigma)}:\sigma\in\bn{s})$ form an $(s,\delta)$-tree for $f$.
\end{proposition}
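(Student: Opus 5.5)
The plan is to verify the defining condition of an $(s,\delta)$-tree directly, transporting each instance of it along $\phi$. First we check that the proposed sequences are indexed correctly. By Proposition \ref{prop:leavesemb}$(c)$, $\phi(\bn{s})\seq\bn{t}$, so for every $\sigma\in\bn{s}$ the node $x_{\phi(\sigma)}$ and the value $r_{\phi(\sigma)}$ are defined. Since $\phi$ is proper, $\phi(\bl{s})\seq\bl{t}$, so for every $\eta\in\bl{s}$ the leaf $y_{\phi(\eta)}$ is defined.

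Next, fix $\sigma\in\bn{s}$ and $\eta,\eta'\in\bl{s}$ with $\sigma\conc 0\iseq\eta$ and $\sigma\conc 1\iseq\eta'$. Applying the tree embedding condition of Definition \ref{def:TE}$(1)$ with $\alpha=0$ and $\tau=\eta$ gives $\phi(\sigma)\conc 0\iseq\phi(\eta)$. Applying it with $\alpha=1$ and $\tau=\eta'$ gives $\phi(\sigma)\conc 1\iseq\phi(\eta')$.

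We now use the hypothesis that the original sequences form a $(t,\delta)$-tree. Here $\phi(\sigma)\in\bn{t}$ and $\phi(\eta),\phi(\eta')\in\bl{t}$, so the tree condition applies to this triple. It gives $f(x_{\phi(\sigma)},y_{\phi(\eta)})\leq r_{\phi(\sigma)}$ and $f(x_{\phi(\sigma)},y_{\phi(\eta')})\geq r_{\phi(\sigma)}+\delta$. This is exactly the required inequality for the new sequences at $\sigma,\eta,\eta'$.

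No step is a genuine obstacle. The only point needing care is the first one: we must use properness, via part $(c)$ together with $\phi(\bl{s})\seq\bl{t}$, to ensure that nodes are sent to nodes and leaves to leaves. Otherwise the indices $x_{\phi(\sigma)}$ or $y_{\phi(\eta)}$ could be undefined.
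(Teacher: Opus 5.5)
Your proposal is correct and follows essentially the same route as the paper: use properness (via Proposition~\ref{prop:leavesemb}$(c)$ and $\phi(\bl{s})\seq\bl{t}$) to see that the new sequences are well-defined. Then transport each instance $\sigma\conc 0\iseq\eta$, $\sigma\conc 1\iseq\eta'$ through the tree-embedding condition and apply the original $(t,\delta)$-tree inequalities at $\phi(\sigma),\phi(\eta),\phi(\eta')$.
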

\begin{proof}
First  note that since $\phi$ is proper, we have $\phi(\bn{s})\seq \bn{t}$ and $\phi(\bl{s})\seq\bl{t}$. Hence the sequences in the conclusion are well-defined. The verification that these sequences form an $(s,\delta)$-tree is immediate from the fact that  for any $\sigma\in\bn{s}$ and $\eta,\eta'\in \bl{s}$, if $\sigma\conc 0\iseq \eta$ and $\sigma\conc 1\iseq \eta'$ then, since $\phi$ is a tree embedding, $\phi(\sigma)\conc 0\iseq \phi(\eta)$ and $\phi(\sigma)\conc 1\iseq \phi(\eta')$. 
\end{proof}

\subsection{Ramsey lemmas for trees}\label{sec:ramsey}

Next we formulate two Ramsey-type results on tree colorings. The first is essentially the same as a lemma proved by Hodges \cite{hodges}, which gives the analogous result for the weaker notion of tree maps.  Our version follows from Hodges' version by composing with an appropriate level-preserving automorphism of a tree. However, for the sake of completeness, we will provide a detailed proof. We first define two operations on functions between trees. 

\begin{definition}$~$ Let $t,s\geq 1$ be integers. 
\begin{enumerate}[$(i)$]
\item Given $\phi\colon \bn{s}\to\bn{t}$ and $u\in\{0,1\}$, define $\phi^u\colon \bn{s}\to \bne{t}$ by setting $\phi^u(\sigma)=u\conc\phi(\sigma)$ for all $\sigma\in \bn{s}$.
\item Given $\phi_0,\phi_1\colon \bn{s}\to\bn{t}$, define $[\phi_0,\phi_1]\colon \bne{s}\to\bne{t}$ by setting $[\phi_0,\phi_1](\emp)=\emp$, and $[\phi_0,\phi_1](u\conc\sigma)=u\conc\phi_u(\sigma)$ for each $u\in\{0,1\}$ and $\sigma\in \bn{s}$.
\end{enumerate}
\end{definition}

These operations will be applied to tree embeddings. In this case,  $\phi^u$ shifts the embedded copy of $\bn{s}$ down one level and  left or right depending on $u$. Likewise, $[\phi_0,\phi_1]$ embeds $\bne{s}$ by shifting down the two copies of $\bn{s}$ embedded  by $\phi_0$ and $\phi_1$ and joining them with root $\emp$. With this in mind, the following is a straightforward exercise.

\begin{remark}\label{rem:treeops}$~$
\begin{enumerate}[$(a)$]
\item If $\phi\colon \bn{s}\to\bn{t}$ is a tree embedding, then so are $\phi^0$ and $\phi^1$.
\item If $\phi_0,\phi_1\colon\bn{s}\to\bn{t}$ are tree embeddings, then so is $[\phi_0,\phi_1]$.
\end{enumerate}
\end{remark}

We now prove the first Ramsey result.

\begin{lemma}[Ramsey for tree embeddings]\label{lem:ramsey}
Fix integers $m,t\geq 1$ and $t_1,\ldots, t_m\geq 1$ such that $t=t_1+\ldots +t_m-m+1$.  Then for any cover
$$
\bn{t}=C_1\cup \ldots \cup C_m,
$$
there is some $1\leq i\leq m$ and a tree embedding $\phi\colon \bn{t_i}\rightarrow \bn{t}$ with  image contained in $C_i$.
\end{lemma}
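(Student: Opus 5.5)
We argue by induction on $t_1+\cdots+t_m$, following Hodges' argument. The approach is to look at whichever class contains the root, and otherwise recurse into the two subtrees of height $t-1$, decreasing the appropriate parameter.

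\emph{Base cases.} If some $t_i=1$, we only need a single node of $\bn{t}$ lying in $C_i$; a tree embedding of $\bn{1}=\{\emp\}$ is just a choice of one node. Since $\emp$ lies in some $C_j$, this is not yet immediate, so the base case needs care.
\begin{enumerate}[$(a)$]
\item If $m=1$, then $t=t_1$, and the identity embedding works.
\item Otherwise, we first discard every $i$ with $t_i=1$ and $C_i=\emptyset$. Removing such an index lowers both $m$ and $\sum t_i$ by one, so $t$ is unchanged and the hypothesis $t=\sum t_i-m+1$ is preserved.
\item If some $t_i=1$ with $C_i\neq\emptyset$, we are done, since any node of $C_i$ is an embedded copy of $\bn{1}$.
\end{enumerate}
So we may assume $m\ge 2$ and all $t_i\ge 2$, which gives $t\ge 2$.

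\emph{Inductive step.} The root $\emp$ lies in some $C_j$. For $u\in\{0,1\}$, the subtree $\{u\conc\sigma:\sigma\in\bn{t-1}\}$ is identified with $\bn{t-1}$, and we pull the cover back to it: $C^u_i=\{\sigma\in\bn{t-1}: u\conc\sigma\in C_i\}$. Define new parameters by $t'_j=t_j-1$ and $t'_i=t_i$ for $i\ne j$. Then $\sum t'_i-m+1=t-1$, and $t'_j\ge 1$. The induction hypothesis applies to each of the two covers of $\bn{t-1}$, giving for each $u$ an index $i_u$ and a tree embedding $\psi_u\colon\bn{t'_{i_u}}\to\bn{t-1}$ with image in $C^{u}_{i_u}$.

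\emph{Case analysis.}
\begin{enumerate}[$(a)$]
\item If $i_u\ne j$ for some $u$, then $t'_{i_u}=t_{i_u}$. The map $\psi_u^u$ is a tree embedding by Remark~\ref{rem:treeops}(a), and its image lies in $C_{i_u}$ by the definition of $C^u_{i_u}$. We are done.
\item Otherwise $i_0=i_1=j$, and both $\psi_0,\psi_1$ are tree embeddings $\bn{t_j-1}\to\bn{t-1}$. By Remark~\ref{rem:treeops}(b), $[\psi_0,\psi_1]\colon\bne{t_j-1}=\bn{t_j}\to\bne{t-1}=\bn{t}$ is a tree embedding. It sends $\emp\mapsto\emp\in C_j$ and $u\conc\sigma\mapsto u\conc\psi_u(\sigma)\in C_j$, so its image lies in $C_j$.
\end{enumerate}

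\emph{Main obstacle.} The only delicate point is the bookkeeping: we must check that the modified parameters still satisfy the induction hypothesis with all $t'_i\ge 1$, and that the identifications $\bne{s}=\bn{s+1}$ agree with the conventions in Remark~\ref{rem:treeops}. Both follow from the reduction to $t_j\ge 2$ made in the base case.
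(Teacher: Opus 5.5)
Your proposal is correct and follows essentially the same route as the paper: put the root in some $C_j$, pull the cover back to the two subtrees of height $t-1$, lower $t_j$ by one, and finish with either $\psi_u^u$ or $[\psi_0,\psi_1]$. The only differences are cosmetic. The paper inducts on $t$ rather than on $\sum t_i$. It also just stops when the class containing the root has $t_j=1$, since then the root alone gives the embedding. Your preliminary step of discarding empty classes with $t_i=1$ and forcing all $t_i\ge 2$ is therefore harmless but unnecessary, because only $t_j\ge 2$ is used in the recursion.
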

\begin{proof}
We proceed by induction on $t\geq 1$. Note that the base case $t=1$ holds trivially.

Assume now $t>1$ and suppose the result holds for $t-1$.
Fix $m\geq 1$ and $t_1,\ldots, t_m\geq 1$ satisfying $t=t_1+\ldots +t_m-m+1$. Let $\bn{t}=C_1\cup \ldots \cup C_m$ be a cover. This yields two covers of $\bn{t-1}$, namely,
\begin{equation}\label{part}
\bn{t-1}=C_1^0\cup \ldots \cup C_m^0=C_1^1\cup \ldots \cup C_m^1,
\end{equation}
where, for each $i\in[m]$ and $u\in\{0,1\}$, we define
$$
C_i^u=\{\sigma\in \bn{t-1}: u\conc \sigma\in C_i\}.
$$

By relabeling if necessary, we may assume $\emp\in C_m$.
Note that if $t_m=1$ then the map from $\bn{t_m}$ to $\bn{t}$ sending $\emp$ to $\emp$ is a tree embedding with image contained in $C_m$, as desired. So we may assume $t_m\geq 2$.
Since $t_1+\ldots+t_{m-1}+(t_m-1)-m+1=t-1$, we can apply our induction hypothesis  to the covers in (\ref{part}) and conclude that one of the following holds.
\begin{enumerate}[$(a)$]
\item For some $u\in \{0,1\}$ and $i\in [m-1]$, there is a tree embedding $\psi\colon \bn{t_i}\rightarrow \bn{t-1}$ with $\Im(\psi)\seq C_i^u$.
\item For each $u\in\{0,1\}$, there is a tree embedding $\phi_u\colon \bn{t_m-1}\rightarrow \bn{t-1}$ with $\Im(\phi_u)\seq C_m^u$. 
\end{enumerate}

Suppose first  $(a)$ holds. By Remark \ref{rem:treeops}$(a)$, we have a tree embedding $\phi\coloneqq\psi^u\colon \bn{t_i}\to \bn{t}$. Moreover, since $\Im(\psi)\seq C^u_i$, we have $\Im(\phi)\seq C_i$.

Assume now $(b)$ holds.  By Remark \ref{rem:treeops}$(b)$, the map $\phi\coloneqq [\phi_0,\phi_1]\colon \bn{t_m}\to\bn{t}$ is a tree embedding. Moreover, since $\emp\in C_m$ and $\Im(\phi_u)\seq C^u_m$ for each $u\in\{0,1\}$, we have $\Im(\phi)\seq C_m$.
\end{proof}

 We will also need a variation of Lemma \ref{lem:ramsey}  suitable for \emph{proper} tree embeddings.

\begin{lemma}[Ramsey for proper tree embeddings]\label{lem:ramseyleaves}
Fix integers $m\geq 1$ and $t,t_1,\ldots, t_m\geq 0$ such that $t=t_1+\ldots +t_m$.  Then for any cover
$$
\bl{t}=C_1\cup \ldots \cup C_m,
$$
there is some $i\in[m]$ and a proper tree embedding $\phi:\bne{t_i}\rightarrow \bne{t}$ such that  $\phi(\bl{t_i})\subseteq C_i$.
\end{lemma}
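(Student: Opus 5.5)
We argue by induction on $t$, following the same scheme as Lemma \ref{lem:ramsey}, but now the "cost" of descending one level is charged to a single colour. The base case is $t=0$. Then every $t_i=0$ and $\bl{0}=\{\emp\}$, so $\emp\in C_i$ for some $i$. The identity map $\bne{0}\to\bne{0}$ is a proper tree embedding with $\phi(\bl{0})\seq C_i$.

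Now let $t\geq 1$, and assume the result for $t-1$ with arbitrary $m$ and arbitrary $t_1,\ldots,t_m\geq 0$. We may discard indices with $t_i=0$ in the following sense: if some $t_i=0$ and $C_i\neq\emptyset$, we pick any $\eta\in C_i$ and send $\emp\mapsto\eta$. This is vacuously a tree embedding, since $\bne{0}$ has no pairs $\sigma\conc\alpha\iseq\tau$, and it is proper because $\eta\in\bl{t}$. So we may assume that every $C_i$ with $t_i=0$ is empty. Since $t\geq 1$, some index, say $t_m$, satisfies $t_m\geq 1$.

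For $u\in\{0,1\}$ we form the covers $\bl{t-1}=C_1^u\cup\cdots\cup C_m^u$, where $C_i^u=\{\eta\in\bl{t-1}:u\conc\eta\in C_i\}$. We apply the induction hypothesis to both covers with parameters $t_1,\ldots,t_{m-1},t_m-1$, whose sum is $t-1$. One of two cases then occurs.

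In the first case, for some $u$ and some $i<m$ there is a proper $\psi\colon\bne{t_i}\to\bne{t-1}$ with $\psi(\bl{t_i})\seq C_i^u$. We take $\phi(\sigma)=u\conc\psi(\sigma)$. This is a tree embedding by the argument of Remark \ref{rem:treeops}$(a)$, which works verbatim on $\bne{\cdot}$. It is proper because leaves go to strings of length $1+(t-1)=t$. Its leaf-image lies in $C_i$.

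In the second case, for both $u$ there are proper $\phi_u\colon\bne{t_m-1}\to\bne{t-1}$ with leaf-images in $C_m^u$. We define $\phi=[\phi_0,\phi_1]\colon\bne{t_m}\to\bne{t}$ by $\emp\mapsto\emp$ and $u\conc\sigma\mapsto u\conc\phi_u(\sigma)$. By Remark \ref{rem:treeops}$(b)$ this is a tree embedding. It is proper, and each leaf $u\conc\eta$ with $\eta\in\bl{t_m-1}$ maps into $C_m$.

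The only point needing care is that, unlike in Lemma \ref{lem:ramsey}, the root has no colour, so no index is pre-assigned to the root. That is exactly why the parameters sum to $t$ rather than to $t+m-1$.

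The second case also requires the induction hypothesis to return index $m$ for both $u=0$ and $u=1$. If instead it returns index $m$ for one $u$ and an index $i<m$ for the other, we are in the first case via the latter. The dichotomy therefore always holds.

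The main obstacle is the boundary bookkeeping. We must handle $t_i=0$ (maps from $\bne{0}$) and $t_m-1=0$ consistently with the definition of proper embedding. We must also check that Remark \ref{rem:treeops} extends to maps $\bne{s}\to\bne{t}$. Both are routine verifications on lengths of strings.
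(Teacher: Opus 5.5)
Your proof is correct and follows the paper's argument essentially verbatim. You induct on $t$, relabel so that $t_m\geq 1$, and apply the hypothesis to the two restricted covers $\bl{t-1}=C_1^u\cup\cdots\cup C_m^u$ with $t_m$ lowered by one, concluding with $\psi^u$ in the first case or $[\phi_0,\phi_1]$ in the second; your preliminary step disposing of colours with $t_i=0$ is harmless but unnecessary, since the induction already handles such indices (a proper embedding of $\bne{0}$ is just a choice of leaf).
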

\begin{proof}
We proceed by induction on $t\geq 0$, with $t=0$ a trivial base case. Assume  $t>0$, and suppose the result holds for $t-1$. Fix integers $m\geq 1$ and $t_1,\ldots, t_m\geq 0$ satisfying $t=t_1+\ldots +t_m$, and a cover $\bl{t}=C_1\cup \ldots \cup C_m$. As in the proof of Lemma \ref{lem:ramsey} above, this   yields the covers
\begin{align}\label{part2}
\bl{t-1}=C_1^0\cup \ldots \cup C_m^0=C_1^1\cup \ldots \cup C_m^1, 
\end{align}
where $C_i^u=\{\eta\in \bl{t-1}: u\conc \eta\in C_i\}$.

By relabeling if necessary, we may assume $t_m\geq 1$ (recall $t>0$). Since $t_1+\ldots +t_{m-1}+(t_m-1)=t-1$, we can apply our induction hypothesis  to the covers in (\ref{part2}), and conclude that one of the following holds.
\begin{enumerate}[$(a)$]
\item For some $i\in[m-1]$ and $u\in \{0,1\}$, there is a proper tree embedding $\psi\colon \bne{t_i}\rightarrow \bn{t}$ with $\psi(\bl{t_i})\seq C_i^u$.
\item For all $u\in\{0,1\}$, there is a proper tree embedding $\phi_u\colon \bn{t_m}\to \bn{t}$ with $\phi_u(\bl{t_m-1})\seq C^u_m$. 
\end{enumerate}

Suppose first $(a)$ holds. By Remark \ref{rem:treeops}$(a)$, the map $\phi\coloneqq\psi^u\colon \bne{t_i}\to \bne{t}$ is a tree embedding. Moreover, since $\psi(\bl{t_i})\seq C^u_i$, we have $\phi(\bl{t_i})\seq C_i$ (hence $\phi$ is proper).

Assume now $(b)$ holds.  By Remark \ref{rem:treeops}$(b)$, the map $\phi\coloneqq [\phi_0,\phi_1]\colon \bne{t_m}\to\bne{t}$ is a tree embedding. Moreover, since $\phi_u(\bl{t_m-1})\seq C^u_m$ for each $u\in\{0,1\}$, we have $\phi(\bl{t_m})\seq C_m$.
\end{proof}

\subsection{Uniform trees}

We now define a uniform variation of $(t,\delta)$-trees (analogous to the uniform variation of ladders in Definition \ref{def:ladders}(2)).

\begin{definition}\label{def:uniformtree}
Given $t\geq 1$ and $\delta>0$, a \emph{uniform $(t,\delta)$-tree for $f\colon X\times Y\to [0,1]$} is a $(t,\delta)$-tree in which the values $(r_\sigma:\sigma\in \bn{t})$ are all equal to some  $r\in[0,1]$. When this exists, we say that $f$ \emph{admits a uniform $(t,\delta)$-tree}, and we call $r$ the \emph{value} for the tree.
\end{definition}

Clearly a uniform $(t,\delta)$-tree for a function $f$ is a $(t,\delta)$-tree in the sense of Definition \ref{def:eptree1}. The following approximate converse implication will be an important ingredient in our later results.

\begin{proposition}\label{prop:unitreeimp}
Given $t\geq 1$ and $\delta,\e>0$, if $f\colon X\times Y\to [0,1]$ admits a $(t',\delta+\e)$-tree, where $t'=\lceil \e\inv\rceil (t-1)+1$, then $f$ admits a uniform $(t,\delta)$-tree.
\end{proposition}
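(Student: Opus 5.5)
The plan is to color the nodes of the given tree by which ``bin'' their value falls into and then apply Lemma~\ref{lem:ramsey}. Set $N=\lceil \e\inv\rceil$. Suppose $(x_\sigma)$, $(y_\eta)$, $(r_\sigma)$ form a $(t',\delta+\e)$-tree for $f$, where $t'=N(t-1)+1$.

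First, partition the nodes into $N$ classes. Since $f$ takes values in $[0,1]$ and each node has at least one leaf on each side, we have $0\le r_\sigma$ and $r_\sigma+\delta+\e\le 1$. Hence $r_\sigma\in[0,1-\delta-\e]\seq[0,1-\e)$. Cover this range by the $N$ intervals $I_j=[(j-1)\e,j\e)$ for $j\in[N]$; these suffice because $N\e\ge 1$. Let $C_j=\{\sigma\in\bn{t'}: r_\sigma\in I_j\}$, which gives a cover $\bn{t'}=C_1\cup\cdots\cup C_N$.

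Next, apply Lemma~\ref{lem:ramsey} with $m=N$ and all $t_j=t$. This is legitimate since $Nt-N+1=t'$. We obtain some $j$ and a tree embedding $\phi\colon\bn{t}\to\bn{t'}$ whose image lies in $C_j$. By Proposition~\ref{prop:leavesemb}$(d)$, extend $\phi$ to a proper tree embedding $\phi^*\colon\bne{t}\to\bne{t'}$. Proposition~\ref{prop:embedtree} then shows that $(x_{\phi(\sigma)})$, $(y_{\phi^*(\eta)})$, $(r_{\phi(\sigma)})$ form a $(t,\delta+\e)$-tree, and all of its values lie in $I_j$.

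Finally, uniformize by setting $r=j\e$, the right endpoint of $I_j$. For $\sigma\conc 0\iseq\eta$ we get $f\le r_{\phi(\sigma)}\le r$. For $\sigma\conc1\iseq\eta'$ we get $f\ge r_{\phi(\sigma)}+\delta+\e\ge (j-1)\e+\delta+\e= r+\delta$. So we have a uniform $(t,\delta)$-tree, provided $r\in[0,1]$.

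The only delicate point is checking that $r\in[0,1]$. The bound $r\le 1$ follows since $r\le r_{\phi(\sigma)}+\e\le 1-\delta<1$. Alternatively, one can take $r=\min(j\e,1)$, or use left-closed bins and check the endpoints directly. I expect no real obstacle beyond this bookkeeping, together with making sure the Ramsey arithmetic $t'=Nt-N+1$ matches Lemma~\ref{lem:ramsey} exactly.
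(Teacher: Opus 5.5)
Your proof is correct and follows the paper's argument essentially step for step: you bin the node values into $\lceil\e\inv\rceil$ classes, apply Lemma~\ref{lem:ramsey} with all $t_i=t$, extend to a proper embedding via Proposition~\ref{prop:leavesemb}$(d)$, and uniformize at the right endpoint of the bin. The only cosmetic difference is in the bins: the paper uses the closed bins $[\frac{i-1}{K},\frac{i}{K}]$ with $K=\lceil\e\inv\rceil$, so the uniform value $\frac{i}{K}\le 1$ is automatic, whereas your half-open bins of length exactly $\e$ require the check $r_\sigma\le 1-\delta-\e$, which you supply correctly.
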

\begin{proof}
Set $T=\lceil \e\inv\rceil (t-1)+1$. Assume $f$ admits a  $(T,\delta+\e)$-tree with nodes $(x_{\sigma}: \sigma\in \bn{T})$, leaves $(y_{\eta}: \eta\in \bl{T})$ and values $(r_{\sigma}: \sigma\in \bn{T})$.  Set $K=\lceil \e\inv\rceil$, and for each $i\in[K]$, define
\[
C_{i}=\left\{\sigma \in \bn{T}:  \frac{i-1}{K}\leq r_\sigma\leq  \frac{i}{K}\right\}.
\]
This forms a cover $\bn{T}=C_1\cup \ldots \cup C_{K}$. Since $T=K(t-1)+1$, Lemma \ref{lem:ramsey}, applied with $m=K$ and $t_1=\ldots=t_K=t$, yields some $i\in[K]$ and a tree embedding $\phi\colon \bn{t}\rightarrow \bn{T}$ with image contained in $C_i$. By Proposition \ref{prop:leavesemb}$(d)$, we can extend $\phi$ to a proper tree embedding $\phi^*\colon \bne{t}\to\bne{T}$. By Proposition \ref{prop:embedtree}, the sequences $\bar{x}=(x_{\phi^*(\sigma)}:\sigma\in\bn{t})$, $\bar{y}=(y_{\phi^*(\eta)}:\eta\in \bl{t})$, and $\bar{r}=(r_{\phi^*(\sigma)}:\sigma\in\bn{t})$ form a $(t,\delta+\e)$-tree for $f$. Since $\Im(\phi^*|_{\bn{t}})=\Im(\phi)\seq C_i$, each value $r_{\phi^*(\sigma)}$ lies in the interval $[\frac{i-1}{K},\frac{i}{K}]$, which has length $\frac{1}{K}\leq \e$. Thus for any $\sigma\in\bn{t}$, $r_{\phi^*(\sigma)}\leq \frac{i}{K}$ and $r_{\phi^*(\sigma)}+\delta+\e\geq \frac{i}{K}+\delta$.
It follows that $\bar{x}$ and $\bar{y}$ form a uniform $(t,\delta)$-tree for $f$ with value $\frac{i}{K}$. 
\end{proof}

\section{Efficient extraction of ladders from trees}\label{sec:hodgesfn1}

We now prove Theorem \ref{thm:hodgesfn1}, which  says that if $f\colon X\times Y\to [0,1]$ admits a $(\binom{2k}{k}-1,2\delta)$-tree, then $f$ admits a $(k,\delta)$-ladder.

\begin{proof}[\textnormal{\textbf{Proof of Theorem \ref{thm:hodgesfn1}}}]
Fix $\delta>0$ and $f\colon X\times Y\to [0,1]$. 
For $k,\ell,t\geq 1$, let $P(k,\ell,t)$ be the following statement:\medskip

\noindent\textit{Statement of $P(k,\ell,t)$.}
If $(x_{\sigma}: \sigma \in \bn{t})$ and $(y_{\eta}: \eta \in \bl{t})$ form  a  $(t,2\delta)$-tree for $f$ with values $(r_\sigma:\sigma\in\bn{t})$,  then one of the following holds.
\begin{enumerate}
\item[\textnormal{(I)}] There are maps $\alpha\colon[k]\to \bn{t}$ and $\beta\colon [k]\to \bl{t}$ such that  for all $i,j\in [k]$, if $i\leq j$ then $f(x_{\alpha(j)},y_{\beta(i)})\geq r_{\alpha(j)}+\delta$,
and if $ i<j$ then $f(x_{\alpha(i)},y_{\beta(j)})\leq r_{\alpha(i)}$.
\item[\textnormal{(II)}] There are maps $\alpha\colon[\ell]\to \bn{t}$ and $\beta\colon [\ell]\to \bl{t}$ such that for all $i,j\in [\ell]$, if $i\leq j$ then $f(x_{\alpha(j)},y_{\beta(i)})< r_{\alpha(j)}+\delta$, and if $i<j$ then $f(x_{\alpha(i)},y_{\beta(j)})\geq r_{\alpha(i)}+2\delta$.
\end{enumerate}\medskip

Using Proposition \ref{prop:embedtree}, it is easy to see that if $P(k,\ell,t)$ holds for some $t\geq 1$, then $P(k,\ell,t')$ holds for any $t'\geq t$.
Given $k,\ell\geq 1$, define $T(k,\ell)$ to be the least integer $t$ (if it exists) such that $P(k,\ell,t)$ holds. The next claim will show that $T(k,\ell)$ exists for all $k$ and $\ell$.  \medskip

\noindent\textit{Claim 1.} 
\begin{enumerate}[$(a)$]
\item For all $k,\ell\geq 1$,  $T(k,1)=T(1,\ell)=1$.
\item For all $k,\ell\geq 2$, if $T(k-1,\ell)$ and $T(k,\ell-1)$ exist then $T(k,\ell)$ exists and 
\[
T(k,\ell)\leq T(k-1,\ell)+T(k,\ell-1)+1.
\]
\end{enumerate}
\noindent\textit{Proof.} Part $(a)$. Suppose $x_{\semp}$ and $(y_0,y_1)$ form  a  $(1,2\delta)$-tree for $f$ with value $r_{\semp}$. Then $f(x_{\semp},y_0)\leq r_{\semp}<r_{\semp}+\delta$,  and so $P(k,1,1)$ holds witnessed by (II) with $\alpha(1)=\emp$ and $\beta(1)=0$. Also, $f(x_{\semp},y_1)\geq r_{\semp}+2\delta\geq r_{\semp}+\delta$, 
and so $P(1,\ell,1)$ holds witnessed by (I) with $\alpha(1)=\emp$ and $\beta(1)=1$.\medskip

Part $(b)$. Assume $T(k-1,\ell)$ and $T(k,\ell-1)$ exist, and set $t=T(k-1,\ell)+T(k,\ell-1)+1$. We will show that $P(k,\ell,t)$ holds. Toward this end, suppose $\bar{x}=(x_{\sigma}: \sigma\in \bn{t})$ and $\bar{y}=(y_{\tau}: \tau\in \bl{t})$ form a $(t,2\delta)$-tree for $f$ with values $\bar{r}=(r_\sigma:\sigma\in\bn{t})$. Fix any $\eta_*\in \bl{t}$. Define
\[
P_0:=\{ \sigma\in \bn{t}: f(x_{\sigma},y_{\eta_*})\geq r_\sigma+\delta\} \quad\text{and}\quad
P_1:=\{\sigma\in \bn{t}: f(x_{\sigma},y_{\eta_*})<r_\sigma+\delta\}.
\]
Note $P_0\cup P_1=\bn{t}$. Set $t_0=T(k-1,\ell)+1$ and $t_1=T(k,\ell-1)+1$. Then $t=t_0+t_1-1$. Thus Lemma \ref{lem:ramsey} implies that for some $u\in\{0,1\}$, there is a tree embedding $\phi:\bn{t_u}\rightarrow \bn{t}$ with $\phi(\bn{t_u})\seq P_u$. By Proposition \ref{prop:leavesemb}$(d)$, we can extend $\phi$ to a proper tree embedding $\phi^*\colon \bne{t_u}\to\bne{t}$. Define $\psi\colon \bn{t_u}\to\bne{t}$ such that $\psi(\sigma)=\phi^*(u\conc\sigma)$. Then $\psi$ is a proper tree embedding by Proposition \ref{prop:leavesemb}$(b)$ and the fact that $\sigma\mapsto u\conc\sigma$ is a proper tree embedding from $\bn{t_u}$ to $\bne{t_u}$. Therefore, by Proposition \ref{prop:embedtree}, $\bar{x}'=(x_{\psi(\sigma)}:\sigma\in\bn{t_u-1})$ and $\bar{y}'=(y_{\psi(\eta)}:\eta\in\bl{t_u-1})$ form a $(t_u-1,2\delta)$-tree for $f$ with values $\bar{r}'=(r_{\psi(\sigma)}:\sigma\in\bn{t_u-1})$. We now analyze two cases depending on the value of $u$. \medskip

\textit{Case $u=0$.} Since $t_u-1= T(k-1,\ell)$, we can apply $P(k-1,\ell,t_u-1)$ to the $(t_u-1,2\delta)$-tree given by $\bar{x}'$, $\bar{y}'$, and $\bar{r}'$. If this is witnessed by (II), then the same maps composed with $\psi$ witness (II) in $P(k,\ell,t)$ for our initial tree given by $\bar{x}$, $\bar{y}$, and $\bar{r}$. So we may assume that $P(k-1,\ell,t_u-1)$ is witnessed by (I). Thus there are maps $\alpha'\colon[k-1]\to\bn{t_u-1}$ and $\beta'\colon [k-1]\to\bl{t_u-1}$ such that for all $i,j\in [k-1]$, if $i\leq j$ then
\begin{enumerate}[$(i)'$]
\item $f(x'_{\alpha'(j)},y'_{\beta'(i)})\geq r'_{\alpha'(j)}+\delta$ and
\item if $i<j$ then $f(x'_{\alpha'(i)},y'_{\beta'(j)})\leq r'_{\alpha'(i)}$.
\end{enumerate}
Define $\alpha\colon[k]\to\bn{t}$ and $\beta\colon [k]\to\bl{t}$ such that $\alpha(1)=\phi(\emp)$, $\beta(1)=\eta_*$, and for $1<i\leq k$, $\alpha(i)=\psi(\alpha'(i-1))$ and $\beta(i)=\psi(\beta'(i-1))$. 
We show that $\alpha$ and $\beta$ witness (I) in $P(k,\ell,t)$ for $\bar{x}$, $\bar{y}$, and $\bar{r}$. 

Fix $i,j\in[k]$ with $i\leq j$. To verify (I), we need to show:
\begin{enumerate}[$(i)$]
\item $f(x_{\alpha(j)},y_{\beta(i)})\geq r_{\alpha(j)}+\delta$ and
\item if $i<j$ then $f(x_{\alpha(i)},y_{\beta(j)})\leq r_{\alpha(i)}$.
\end{enumerate}

First suppose $i\geq 2$. Then 
\begin{multline*}
f(x_{\alpha(j)},y_{\beta(i)})
=f(x_{\psi(\alpha'(j-1))},y_{\psi(\beta'(i-1))})\\
=f(x'_{\alpha'(j-1)},y'_{\beta'(i-1)})
\geq r'_{\alpha'(j-1)}+\delta
=r_{\psi(\alpha'(j-1))}+\delta
=r_{\alpha(j)}+\delta,
\end{multline*}
where the inequality holds by $(i)'$ since $i-1\leq j-1$. 
This yields $(i)$ in this case. The verification of $(ii)$ follows similarly using $(ii)'$.

Suppose now $i=1$. Recall $u=0$. For $(i)$, first note that if $j=1$ then $\alpha(j)=\phi(\emp)$, while if $j>1$ then 
\[
\alpha(j)=\psi(\alpha'(j-1))=\phi^*(0\conc\alpha'(j-1))=\phi(0\conc\alpha'(j-1)),
\]
where the last equality uses the fact that $\alpha'(j-1)\in\bn{t_0-1}$ and hence $u\conc\alpha'(j-1)\in\bn{t_0}$. In either case, $\alpha(j)\in\phi(\bn{t_0})\seq P_0$, and thus $f(x_{\alpha(j)},y_{\beta(1)})
=
f(x_{\alpha(j)},y_{\eta_*})
\geq r_{\alpha(j)}+\delta$. This establishes $(i)$ (in the $i=1$ case).

Finally, for $(ii)$ (in the $i=1$ case), suppose $j>1$. Then
\[
\beta(j)=\psi(\beta'(j-1))=\phi^*(0\conc\beta'(j-1)).
\]
Since $\phi^*$ is a tree embedding extending $\phi$, we therefore have $\phi(\emp)\conc 0\iseq \beta(j)$. Thus, since $\bar{x}$, $\bar{y}$, and $\bar{r}$ form a $(t,2\delta)$-tree for $f$, we have
\[
f(x_{\alpha(1)},y_{\beta(j)})
=
f(x_{\phi(\semp)},y_{\beta(j)})
\leq r_{\phi(\semp)}
=
r_{\alpha(1)}.
\]

\textit{Case $u=1$.} The argument is similar to the previous case. We apply $P(k,\ell-1,t_u-1)$ to $\bar{x}'$, $\bar{y}'$, and $\bar{r}'$, and use the analogous argument to assume this is witnessed by (II). Thus there are maps $\alpha'\colon[\ell-1]\to\bn{t_u-1}$ and $\beta'\colon [\ell-1]\to\bl{t_u-1}$ such that for all $i,j\in [\ell-1]$, if $i\leq j$ then
\begin{enumerate}[$(i)'$]
\item $f(x'_{\alpha'(j)},y'_{\beta'(i)})<r'_{\alpha'(j)}+\delta$, and
\item if $i<j$ then $f(x'_{\alpha'(i)},y'_{\beta'(j)})\geq r'_{\alpha'(i)}+2\delta$.
\end{enumerate}
Define $\alpha\colon[\ell]\to\bn{t}$ and $\beta\colon [\ell]\to\bl{t}$ exactly as in the previous case, except with $k$ replaced by $\ell$.
We show that $\alpha$ and $\beta$ witness (II) in $P(k,\ell,t)$ for $\bar{x}$, $\bar{y}$, and $\bar{r}$. 

Fix $i,j\in[\ell]$ with $i\leq j$. To verify (II), we need to show:
\begin{enumerate}[$(i)$]
\item $f(x_{\alpha(j)},y_{\beta(i)})<r_{\alpha(j)}+\delta$ and
\item if $i<j$ then $f(x_{\alpha(i)},y_{\beta(j)})\geq r_{\alpha(i)}+2\delta$.
\end{enumerate}

If $i\geq 2$ then the verification of $(i)$ and $(ii)$ follows using $(i)'$ and $(ii)'$ as in the previous case. So we may assume $i=1$. Recall $u=1$. For $(i)$, as in the previous case, we have $\alpha(j)\in\phi(\bn{t_1})\seq P_1$, and thus $
f(x_{\alpha(j)},y_{\beta(1)})
=
f(x_{\alpha(j)},y_{\eta_*})
<
r_{\alpha(j)}+\delta$, as desired. For $(ii)$, suppose $j>1$. Then
\[
\beta(j)=\psi(\beta'(j-1))=\phi^*(1\conc\beta'(j-1)).
\]
Since $\phi^*$ is a tree embedding extending $\phi$, we therefore have $\phi(\emp)\conc 1\iseq \beta(j)$. Thus, since $\bar{x}$, $\bar{y}$, and $\bar{r}$ form a $(t,2\delta)$-tree for $f$, we have
\[
f(x_{\alpha(1)},y_{\beta(j)})
=
f(x_{\phi(\semp)},y_{\beta(j)})
\geq r_{\phi(\semp)}+2\delta
=
r_{\alpha(1)}+2\delta.\clqed
\]

Now, for $k,\ell\geq 1$, set $B(k,\ell)=2\binom{k+\ell-2}{k-1}-1$. Then for any $k,\ell\geq 1$, $B(k,1)=1=B(1,\ell)$. Moreover, if $k,\ell\geq 2$ then $B(k,\ell)=B(k-1,\ell)+B(k,\ell-1)+1$ 
by the standard recursive identity for binomial coefficients. By induction on $k+\ell$, it follows that $T(k,\ell)\leq B(k,\ell)$ for all $k,\ell\geq 1$.

We can now finish the proof of the theorem. Set $t=\binom{2k}{k}-1$. 
Suppose $f$ admits a $(t,2\delta)$-tree given by $(x_{\sigma}: \sigma \in \bn{t})$, $(y_{\eta}: \eta \in \bl{t})$, and $(r_\sigma:\sigma\in\bn{t})$. We show that $f$ admits a $(k,\delta)$-ladder. First, note that $t=B(k,k+1)$. So $t\geq T(k,k+1)$ by the above, and hence we can apply $P(k,k+1,t)$ to this $(t,2\delta)$-tree. 

Suppose first that (I) holds in $P(k,k+1,t)$, witnessed by $\alpha\colon[k]\to\bn{t}$ and $\beta\colon [k]\to \bl{t}$. For $i\in[k]$, set $a_i=x_{\alpha(k-i+1)}$, $b_i=y_{\beta(k-i+1)}$, and $r^*_i=r_{\alpha(k-i+1)}$. 
Given $i,j\in[k]$, if $i\leq j$ then by (I),
\[
f(a_i,b_j)
=
f(x_{\alpha(k-i+1)},y_{\beta(k-j+1)})
\geq r_{\alpha(k-i+1)}+\delta
=
r^*_i+\delta,
\]
while if $i>j$ then  by (I),
\[
f(a_i,b_j)
=
f(x_{\alpha(k-i+1)},y_{\beta(k-j+1)})
\leq r_{\alpha(k-i+1)}
=
r^*_i.
\]
Thus $(a_1,\ldots,a_k)$ and $(b_1,\ldots,b_k)$ yield a $(k,\delta)$-ladder for $f$ with values $(r^*_1,\ldots,r^*_k)$.

Finally suppose (II) holds in $P(k,k+1,t)$, witnessed by $\alpha\colon[k+1]\to\bn{t}$ and $\beta\colon [k+1]\to \bl{t}$. For $i\in [k]$, set $a_i=x_{\alpha(i)}$, $b_i=y_{\beta(i+1)}$, and $r^*_i=r_{\alpha(i)}+\delta$. Then a similar verification shows that $(a_1,\ldots,a_k)$ and $(b_1,\ldots,b_k)$ form a $(k,\delta)$-ladder for $f$ with values $(r^*_1,\ldots,r^*_k)$. 
\end{proof}

By the previous proof, we see that Theorem \ref{thm:hodgesfn1} also holds with the uniform variations of trees and ladders. Indeed, if the original tree in the proof has the uniform value $r$, then the resulting ladder has uniform value either $r$ or
$r+\delta$.  We record this observation in the following corollary.

\begin{corollary}\label{cor:hodgesfn1uni}
Given $k\geq 1$ and $\delta>0$, if $f\colon X\times Y\to [0,1]$ admits a uniform $(\binom{2k}{k}-1,2\delta)$-tree, then $f$ admits a uniform $(k,\delta)$-ladder. 
\end{corollary}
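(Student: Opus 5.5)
The plan is to rerun the proof of Theorem \ref{thm:hodgesfn1} verbatim on a uniform tree and check that uniformity of the values is inherited by the ladder. Concretely, I would suppose $f$ admits a uniform $(t,2\delta)$-tree with $t=\binom{2k}{k}-1$, given by $(x_\sigma:\sigma\in\bn{t})$ and $(y_\eta:\eta\in\bl{t})$, with common value $r$, so that $r_\sigma=r$ for all $\sigma\in\bn{t}$. Such a tree is in particular a $(t,2\delta)$-tree, and since $t=B(k,k+1)\geq T(k,k+1)$, the statement $P(k,k+1,t)$ established in that proof applies. Nothing in Claim 1 needs to be re-examined; the only thing to check is how the ladder values are defined from the tree values in the two final cases.

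If alternative (I) holds, the proof constructs the ladder $a_i=x_{\alpha(k-i+1)}$, $b_i=y_{\beta(k-i+1)}$ with values $r^*_i=r_{\alpha(k-i+1)}$. Since every tree value equals $r$, we get $r^*_i=r$ for all $i\in[k]$, so the $(k,\delta)$-ladder obtained there is uniform with value $r$. If alternative (II) holds, the ladder is $a_i=x_{\alpha(i)}$, $b_i=y_{\beta(i+1)}$ with values $r^*_i=r_{\alpha(i)}+\delta=r+\delta$, again constant in $i$. I would check that $r+\delta\in[0,1]$ so that it is a legitimate value: if $k\geq 2$, (II) gives $f(x_{\alpha(1)},y_{\beta(2)})\geq r+2\delta$, forcing $r+2\delta\leq 1$. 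The case $k=1$ is trivial, because a $(1,2\delta)$-tree directly gives $f(x_{\semp},y_1)\geq r+2\delta$, which is a uniform $(1,\delta)$-ladder with value $r$. Alternatively, one can observe that the values in alternative (II) lie in $[0,1]$ by the same argument whenever $\ell\geq 2$.

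I do not expect a real obstacle here. The one point needing care is the remark that $P(k,\ell,t)$ was proved for arbitrary $(t,2\delta)$-trees, so it applies with no modification to uniform ones; the uniformity is then carried through purely by the explicit formulas for $r^*_i$. The remaining verification is the one already carried out in the final step of the proof of Theorem \ref{thm:hodgesfn1}.
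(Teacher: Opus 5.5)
Your proposal is correct and is exactly the paper's argument: the paper simply observes that rerunning the proof of Theorem~\ref{thm:hodgesfn1} on a uniform tree with value $r$ produces ladder values $r$ in case (I) and $r+\delta$ in case (II). Your extra check that $r+\delta\leq 1$ is a detail the paper leaves implicit. Since $\ell=k+1\geq 2$ when $P(k,k+1,t)$ is applied, the inequality $f(x_{\alpha(1)},y_{\beta(2)})\geq r+2\delta$ holds for every $k\geq 1$, so your separate treatment of $k=1$ is not needed.
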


\begin{remark}\label{rem:2kkbound}
Using a standard approximation for central binomial coefficients, we see that the bound in Theorem \ref{thm:hodgesfn1} grows  on the order of $4^k/\sqrt{\pi k}$. By contrast, in the discrete case, the corresponding bound from Theorem \ref{thm:hodges}$(2)$ is on the order of $2^k$. This motivates the following open question.
\end{remark}

\begin{question}
In Theorem \ref{thm:hodgesfn1}, can $t$ be bounded on the order of $2^k$?\footnote{A ChatGPT query resulted in various tricks to improve the bound by a constant factor, but did not successfully resolve this question one way or the other.}
\end{question}

On the other hand, the $2\delta$ term in Theorem \ref{thm:hodgesfn1} cannot be improved: 

\begin{proposition}\label{prop:GPT1}
Fix $0<\delta\leq\frac{1}{4}$ and $t\geq 1$. Then for any $0<\alpha<2\delta$, there is a function $f\colon X\times Y\to [0,1]$ that admits a uniform $(t,\alpha)$-tree, but omits $(3,\delta)$-ladders. 
\end{proposition}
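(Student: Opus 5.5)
The plan is a direct construction. I will take $f$ with only three values, spaced $\alpha/2$ apart. Because $\delta>\alpha/2$, every gap of size at least $\delta$ in $f$ must then be a gap between the lowest and highest values, and I will make such gaps occur only between a node and a leaf lying below it. A $(3,\delta)$-ladder would then force an impossible configuration of leaves under two comparable nodes.

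\emph{Construction.} Let $X=\bn{t}$ and $Y=\bl{t}$, and set $r=\frac12-\frac{\alpha}{2}$. Since $\alpha<2\delta\leq\frac12$, all the values below lie in $[0,1]$. For $\sigma\in X$ and $\eta\in Y$ define
\[
f(\sigma,\eta)=\begin{cases} r & \text{if } \sigma\conc 0\iseq\eta,\\ r+\alpha & \text{if } \sigma\conc 1\iseq\eta,\\ r+\alpha/2 & \text{if } \sigma\not\iseq\eta.\end{cases}
\]
Taking $x_\sigma=\sigma$, $y_\eta=\eta$ and all values equal to $r$ gives a uniform $(t,\alpha)$-tree for $f$, directly from Definition \ref{def:eptree1}.

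\emph{No $(3,\delta)$-ladder.} Suppose $x_1,x_2,x_3$, $y_1,y_2,y_3$ and $r_1,r_2,r_3$ form a $(3,\delta)$-ladder for $f$. For $i=2,3$ and any $j<i\leq j'$ we have $f(x_i,y_{j'})-f(x_i,y_j)\geq\delta>\alpha/2$. The only pair of values of $f$ differing by more than $\alpha/2$ is $r$ and $r+\alpha$, so $f(x_i,y_j)=r$ and $f(x_i,y_{j'})=r+\alpha$. Hence $x_i$ is an initial segment of all three leaves, with $y_j$ passing left ($x_i\conc0\iseq y_j$) and $y_{j'}$ passing right. Concretely:
\begin{itemize}
\item for $x_2$: $y_1$ goes left, while $y_2$ and $y_3$ go right;
\item for $x_3$: $y_1$ and $y_2$ go left, while $y_3$ goes right.
\end{itemize}
Since $x_2$ and $x_3$ are both initial segments of $y_1$, they are comparable. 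They are distinct, because $y_2$ goes right at $x_2$ but left at $x_3$.

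\emph{Contradiction.} If $x_2\iseq x_3$ strictly, then all three leaves extend $x_3$, hence extend the same child $x_2\conc u$ of $x_2$. This contradicts $y_1$ going left and $y_2$ going right at $x_2$. If instead $x_3\iseq x_2$ strictly, the same argument at $x_3$ contradicts $y_1$ going left and $y_3$ going right at $x_3$.

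The only step needing care is the reduction that a gap of at least $\delta$ forces the extreme values. This is exactly where $\alpha<2\delta$ is used, and I expect no real obstacle beyond it. Row $1$ of the ladder is never used, which is harmless.
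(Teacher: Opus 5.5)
Your proof is correct and follows the paper's argument. It uses the same three-valued construction (the paper takes values $0,\alpha,\frac{\alpha}{2}$ with tree value $0$, so your shift by $r$ is harmless but unnecessary) and the same key step: $\delta>\alpha/2$ forces $x_i\conc 0\iseq y_j$ and $x_i\conc 1\iseq y_{j'}$ whenever $j<i\leq j'$. The only difference is cosmetic: the paper derives the final contradiction from the fact that cones $C(\sigma)=\{\eta:\sigma\iseq\eta\}$ are nested or disjoint, whereas you split into cases according to how the comparable nodes $x_2$ and $x_3$ are ordered.
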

\begin{proof}
Set $X=\bn{t}$ and $Y=\bl{t}$. Define  $f\colon X\times Y\to [0,1]$ such that
\[
f(\sigma,\eta)=
\begin{cases}
0 & \text{if } \sigma\conc 0\iseq \eta,\\
\alpha & \text{if } \sigma\conc 1\iseq \eta,\\
{\textstyle\frac{\alpha}{2}} & \text{otherwise.}
\end{cases}
\]
Then $f$ admits a uniform $(t,\alpha)$-tree with value $0$. Toward a contradiction, suppose $f$ admits a $(3,\delta)$-ladder with sequences $(\sigma_1,\sigma_2,\sigma_3)$ and $(\eta_1,\eta_2,\eta_3)$, and values $(r_1,r_2,r_3)$. \medskip

\noindent\textit{Claim 1.} If $i,j,j'\in[3]$ and $j'<i\leq j$, then $\sigma_i\conc 1\iseq \eta_j$ and $\sigma_i\conc 0\iseq \eta_{j'}$. 

\noindent\textit{Proof.} The definition of a $(3,\delta)$-ladder yields $f(\sigma_i,\eta_j)\geq r_i+\delta$ and $f(\sigma_i,\eta_{j'})\leq r_i$, hence $f(\sigma_i,\eta_j)\geq f(\sigma_i,\eta_{j'})+\delta$. Since $f$ is $\{0,\alpha,\frac{\alpha}{2}\}$-valued, and $\frac{\alpha}{2}<\delta$, we must have $f(\sigma_i,\eta_j)=\alpha$ and $f(\sigma_i,\eta_{j'})=0$. The claim now follows by definition of $f$.\clqed\medskip

Now, given $\sigma\in \bne{t}$, let $C(\sigma)=\{\eta\in \bl{t}:\sigma\iseq \eta\}$. Note that if $\sigma,\tau\in \bne{t}$ and $C(\sigma)\cap C(\tau)\neq\emptyset$, then either $\sigma\iseq \tau$ in which case $C(\tau)\seq C(\sigma)$, or $\tau\iseq\sigma$ in which case $C(\sigma)\seq C(\tau)$. On the other hand, setting $C=C(\sigma_2\conc 1)$ and $C'=C(\sigma_3\conc 0)$, Claim 1 implies  $\eta_2\in C\cap C'$, $\eta_3\in C\backslash C'$, and $\eta_1\in C'\backslash C$, which is a contradiction. 
\end{proof}

Finally, we prove the two corollaries stated in Subsection \ref{sec:mainresults}. The first is an implication from trees to uniform ladders, whose proof passes first through the implication from trees to uniform trees in Proposition \ref{prop:unitreeimp}.

\begin{proof}[\textnormal{\textbf{Proof of Corollary \ref{cor:hodgesfn1}}}]
Fix $k\geq 1$ and $\delta,\e>0$. 
Set $t=\lceil \e\inv\rceil(\binom{2k}{k}-2)+1$, and note that $t\leq \lceil\e\inv \rceil 4^k$ since  $\binom{2k}{k}\leq 4^k$. Suppose $f\colon X\times Y\to [0,1]$ admits a $(t,2\delta+\e)$-tree. Then $f$ admits a uniform $(\binom{2k}{k}-1,2\delta)$-tree by Proposition \ref{prop:unitreeimp}, and hence a uniform $(k,\delta)$-ladder  by Corollary \ref{cor:hodgesfn1uni}.
\end{proof}

Recall that Theorem \ref{thm:AB} (from \cite{AndBen}) provides an implication from a $(t,2\delta+\e)$-tree to an agnostic $(k,\delta)$-ladder, with $t\leq O((1/\e)^k)$, and hence Corollary \ref{cor:hodgesfn1} constitutes a stronger conclusion with an improved bound. It is interesting to note that to prove this result, one might be inclined to instead first use Theorem \ref{thm:hodgesfn1} to extract a ladder from a tree, and then uniformize the ladder. This strategy is possible (see Proposition \ref{prop:lad-unilad}), but it would result in a  bound on the order of $4^{k/\e}$, which is worse than Theorem \ref{thm:AB}.

Finally, we prove Corollary \ref{cor:dualbound}, which concerns transferring trees in  $f\colon X\times Y\to [0,1]$ to trees in $f^{opp}$. Recall that the discrete version of this argument (sketched after Theorem \ref{thm:hodges}) relied on the fact that $k$-ladders for some $E\seq X\times Y$ can be reindexed to be $k$-ladders for $E^{opp}$. In the case of functions, the argument is  a little more subtle since a $(k,\delta)$-ladder does not satisfy the same symmetry between $f$ and $f^{opp}$ due to the fact that the values of the ladder are attached to elements from $X$. Therefore, we must pass through uniform ladders, which introduces the ``slack" parameter $\e$. For later purposes, we record this symmetry observation separately, and then move on to the proof of Corollary \ref{cor:dualbound}.

\begin{remark}\label{rem:uniformlad-sym}
Fix $k\geq 1$, $\delta>0$, and $f\colon X\times Y\to [0,1]$. Suppose $x_1,\ldots,x_k\in X$ and $y_1,\ldots,y_k\in Y$ form a uniform $(k,\delta)$-ladder for $f$ with value $r$. Then $(y_k,\ldots,y_1)$ and $(x_k,\ldots,x_1)$ form a uniform $(k,\delta)$-ladder for $f^{opp}$ with value $r$.
\end{remark}

\begin{proof}[\textnormal{\textbf{Proof of Corollary \ref{cor:dualbound}}}]
Fix $t\geq 1$ and $\delta,\e>0$, and let $t^*\leq \lceil\e\inv\rceil2^{2^{t+1}}$ be as in Corollary \ref{cor:hodgesfn1} with $k=2^t$. Suppose $f\colon X\times Y\to [0,1]$ admits a $(t^*,2\delta+\e)$-tree. By Corollary \ref{cor:hodgesfn1}, $f$ admits a uniform $(2^t,\delta)$-ladder, which then gives a $(2^t,\delta)$-ladder in $f^{opp}$ by Remark \ref{rem:uniformlad-sym}. So $f^{opp}$ admits a $(t,\delta)$-tree by Theorem \ref{thm:optree}. 
\end{proof}

Using the sharper approximation of central binomial coefficients in Remark \ref{rem:2kkbound}, one can slightly improve the bound in Corollary \ref{cor:dualbound} to $t^*\sim \frac{1}{\sqrt{\pi}}\lceil \e\inv\rceil 2^{2^{t+1}-t/2}$ (with $\e$ fixed and $t$ tending to infinity).

\section{Improved extraction of tight ladders from trees}\label{sec:tight}

The goal of this section is to prove Theorem \ref{thm:tight}. The overall proof structure is similar to that of Theorem \ref{thm:hodgesfn1} in that we will extract a ladder-like configuration from a tree, using the tree coloring lemmas proved in Subsection \ref{sec:ramsey}. Let us first define said ladder-like configuration.

\begin{definition}\label{def:proto}
Fix $k\geq 1$, $\delta>0$, and collections $\calI$ and $\calJ$ of subsets of $[0,1]$. An \emph{$(\calI,\calJ)$-covered $(k,\delta)$-proto-ladder} for a function $f\colon X\times Y\to [0,1]$ consists of  sequences  $(x_1,\ldots,x_k)$ from $X$, $(y_1,\ldots,y_{k+1})$ from $Y$ and $(r_1,\ldots,r_k)$ from $[0,1]$ such that, for some $I_1,\ldots,I_k\in \calI$ and $J_1,\ldots,J_k\in\calJ$, the following properties hold for all $i\in[k]$.
\begin{enumerate}
\item[(1)$_i$] For all $j\in[k]$, if  $i\leq j$ then $f(x_j,y_i)\in I_i$.
\item[(2)$_i$] For all $j\in[k+1]$, if $i<j$ then $f(x_i,y_j)\in J_i$.
\item[(3)$_i$] One of the following properties holds.
\begin{enumerate}
\item[(I)$_i$] $f(x_i,y_i)>r_i$ and, for all $j\in[k+1]$, if $i<j$ then $f(x_i,y_j)\leq r_i-\delta$.
\item[(II)$_i$] $f(x_i,y_i)\leq r_i$ and, for all $j\in[k+1]$, if $i<j$ then $f(x_i,y_j)\geq r_i+\delta$.
\end{enumerate}
\end{enumerate}
When such sequences exist, we say $f$ \emph{admits an $(\calI,\calJ)$-covered $(k,\delta)$-proto-ladder}. 
If $\calI=\{I\}$ and $\calJ=\{J\}$ for some $I,J\seq [0,1]$, then we write $I$ and $J$ rather than $\calI$ and $\calJ$.
\end{definition}

The previous definition is an elaboration on the kind of configuration constructed in the proof of Theorem \ref{thm:hodgesfn1}. Note that conditions (I) and (II) in that proof loosely match conditions (I)$_i$ and (II)$_i$ in property (3)$_i$ of Definition \ref{def:proto}. However, in the proof of Theorem \ref{thm:hodgesfn1}, we used Lemma \ref{lem:ramsey} to preserve either (I) or (II)  globally, which then implied the existence of a ladder. In the present situation, we will instead use Lemmas \ref{lem:ramsey} and \ref{lem:ramseyleaves} in parallel to accomplish properties (1)$_i$ and (2)$_i$, which lay the groundwork for tightness. We will then apply pigeonhole to pass to a subsequence with uniform choices of $I$ and $J$ in (1)$_i$ and (2)$_i$. By choosing $I$ and $J$ to be sufficiently small intervals, we will force either (I)$_i$ or (II)$_i$ to hold globally for all $i$, which altogether will yield a uniform tight ladder. The next two lemmas extract the main technical steps of this rough sketch. First, we show that a proto-ladder uniformly covered by small intervals contains a uniform tight ladder. 

\begin{lemma}\label{lem:protoladder}
Fix $k\geq 1$ and $\delta,\e>0$. Suppose $f\colon X\times Y\to [0,1]$ admits an $(I,J)$-covered $(k,\delta+\e)$-proto-ladder, where $I$ and $J$ are intervals of length at most $\e$. Then $f$ admits a uniform $\e$-tight $(k,\delta)$-ladder.
\end{lemma}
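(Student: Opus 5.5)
The plan is to turn the proto-ladder into a ladder after reversing both index orders, and to read off a single common value $r$ from the two intervals $I$ and $J$. Let $(x_1,\ldots,x_k)$, $(y_1,\ldots,y_{k+1})$, $(r_1,\ldots,r_k)$ be the $(I,J)$-covered $(k,\delta+\e)$-proto-ladder. Properties (1)$_i$ and (2)$_i$ say that every entry $f(x_j,y_i)$ with $j\geq i$ lies in $I$, and every entry $f(x_i,y_j)$ with $j>i$ lies in $J$.

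The ladder itself will be $a_i=x_{k-i+1}$ and $b_j=y_{k-j+1}$ in the case where $I$ lies above $J$. In the opposite case I would use $a_i=x_i$ and $b_j=y_{j+1}$, which uses the extra leaf $y_{k+1}$. This mirrors the two final cases in the proof of Theorem \ref{thm:hodgesfn1}. In either case, the entries with $i\leq j$ all come from one of the two intervals and the entries with $i>j$ all come from the other. So $\e$-tightness is automatic, and uniformity reduces to a single separation statement between $I$ and $J$.

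The key steps, in order, are as follows.

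First, I would show that condition (3)$_i$ is of the same type for all $i$. If (I)$_i$ holds, then $I$ contains the point $f(x_i,y_i)>r_i$ and $J$ contains the point $f(x_i,y_{i+1})\leq r_i-\delta-\e$. If (II)$_{i'}$ holds for some $i'$, then $I$ contains a point $\leq r_{i'}$ and $J$ contains a point $\geq r_{i'}+\delta+\e$. Since both intervals have length at most $\e$, $I$ would then have to sit strictly above $J$ by more than $\delta$ and also strictly below it. This is a contradiction, which pins down which interval is higher.

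Second, assume without loss of generality that all (I)$_i$ hold. I would then choose $r$ and check the inequalities
\[
\inf I\geq r+\delta \quad\text{and}\quad \sup J\leq r.
\]
Both must be verified on the entries actually realized, not on the full intervals. The ladder conditions only involve the realized values $f(x_j,y_i)$, and the choice of $r$ should be taken from these, for example $r=\max\{f(x_i,y_j):i<j\leq k+1\}$. The case where all (II)$_i$ hold is symmetric, with $r$ taken to be the largest realized value coming from $I$.

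The main obstacle is the bookkeeping in the second step. A naive estimate gives $\inf I>r_i-\e$ and $\sup J\leq r_i-\delta$, so the gap is only $\delta-\e$, and an $\e$ is lost. To avoid this loss I would compare entries within a single row first. In row $j$, every $J$-entry is at most $r_j-\delta-\e$, while the diagonal entry exceeds $r_j$ and every $I$-entry lies within $\e$ of that diagonal. This gives the desired separation of $\delta$ inside each row. I would then try to transfer it across rows by choosing the row $j$ that realizes the relevant extremum (the row containing the maximal $J$-entry, or the minimal $I$-entry).

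If this still loses $\e$, I would revisit the choice of ladder so that the comparison only ever involves entries from a single row. Failing that, I would check whether the intended reading of the hypothesis absorbs the loss.
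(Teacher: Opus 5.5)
Your proposal is correct and follows the paper's proof. The paper's Claim 1 rules out mixed types in the same way, the two index choices are the same, and Claim 3 is your row-wise comparison: every $I$-entry is at least $f(x_{i^*},y_{i^*})-\e>r_{i^*}-\e$, using the diagonal of the row $i^*$ containing a given $J$-entry, and that $J$-entry is at most $r_{i^*}-\delta-\e$. So no $\e$ is lost and your fallback is unnecessary; the only cosmetic difference is that your $r$ also includes the column $y_{k+1}$, which covers $k=1$ without the separate treatment the paper gives that case.
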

\begin{proof}
We first deal with the $k=1$ case. Note that in order to construct a uniform $\epsilon$-tight $(1,\delta)$-ladder, one only needs $x\in X$ and $y\in Y$ such that $f(x,y)\geq\delta$, which easily follows from the existence of an $(I,J)$-covered $(1,\delta+\e)$-proto-ladder. So we may assume $k\geq 2$.

Fix $x_1,\ldots,x_k\in X$, $y_1,\ldots,y_{k+1}\in Y$, and $r_1,\ldots,r_k\in [0,1]$ comprising an $(I,J)$-covered $(k,\delta+\e)$-proto-ladder for $f$. Throughout the proof, we will refer to  properties (1)$_i$, (2)$_i$, and (3)$_i$ from  Definition \ref{def:proto}  in the context of  these sequences. In particular, note that $I_i=I$ in property (1)$_i$, and $J_i=J$ in property (2)$_i$.\medskip

\noindent\textit{Claim 1.} One of the following two cases holds:
\begin{enumerate}[(I)]
\item For all $i\in[k]$, property \textnormal{(I)}$_i$ holds in (3)$_i$.
\item For all $i\in[k]$, property \textnormal{(II)}$_i$ holds in (3)$_i$.
\end{enumerate}

\noindent\textit{Proof.} For $i\in[k]$, set $u_i=f(x_i,y_i)$ and $v_i=f(x_i,y_{k+1})$. Toward a contradiction, suppose the claim fails. Fix $i,j\in[k]$ such that (I)$_i$ holds and (II)$_j$ holds. By (I)$_i$, we have $u_i>r_i$ and $v_i\leq r_i-(\delta+\e)$, hence $u_i-v_i>\delta+\e$. Similarly, by (II)$_j$, we have $u_j\leq r_j$ and $v_j\geq r_j+\delta+\e$, hence $v_j-u_j\geq\delta+\e$. Therefore
\begin{equation}\label{eq:2rhofii}
2\delta+2\e<u_i-v_i+v_j-u_j=u_i-u_j+v_j-v_i.
\end{equation}
On the other hand, $u_i,u_j\in I$ by  (1)$_i$ and (1)$_j$, while   $v_i,v_j\in J$ by (2)$_i$ and (2)$_j$, which implies  
\begin{equation}\label{eq:fiitau}
\max\{|u_i-u_j|,|v_i-v_j|\}\leq\e.
\end{equation}
Together, (\ref{eq:2rhofii}) and (\ref{eq:fiitau}) imply $2\delta+2\e<2\e$, which is a contradiction.\clqed\medskip

We now define sets $U,V\seq[0,1]$ and sequences $(a_1,\ldots,a_k)$ from $X$ and  $(b_1,\ldots,b_k)$ from $Y$ according to the cases in Claim 1.\medskip

\noindent Case (I): Set $U=I$ and $V=J$. For $i\in [k]$, set $a_i=x_{k-i+1}$ and $b_i=y_{k-i+1}$.

\noindent Case (II): Set $U=J$ and $V=I$. For $i\in [k]$, set $a_i=x_i$ and $b_i=y_{i+1}$.\medskip

We will show that $(a_1,\ldots,a_k)$ and $(b_1,\ldots,b_k)$ form an $\e$-tight $(k,\delta)$-ladder for $f$. \medskip

\noindent\textit{Claim 2.} 
\begin{enumerate}[$(a)$]
\item Given $i,j\in[k]$, if $i\leq j$ then $f(a_i,b_j)\in U$.
\item Given $i,j\in[k]$, if $i>j$ then $f(a_i,b_j)\in V$. 
\end{enumerate}
\noindent\textit{Proof.} 
First assume case (I). For $(a)$, if $i\leq j$ then $f(a_i,b_j)=f(x_{k-i+1},y_{k-j+1})\in I$ by  (1)$_{k-j+1}$. For $(b)$, if $i>j$ then $f(a_i,b_j)=f(x_{k-i+1},y_{k-j+1})\in J$ by  (2)$_{k-i+1}$.

Now assume case (II). For $(a)$, if $i\leq j$ then $f(a_i,b_j)=f(x_i,y_{j+1})\in J$ by  (2)$_i$. For $(b)$, if $i>j$ then $f(a_i,b_j)=f(x_i,y_{j+1})\in I$ by  (1)$_{j+1}$.\clqed\medskip

Now define
\[
r=\max\{f(a_i,b_j):i,j\in[k],~i>j\} \quad \text{and} \quad s=\min\{f(a_i,b_j):i,j\in[k],~i\leq j\}.\footnote{Note that $r$ exists since $k\geq 2$.}
\]

\noindent\textit{Claim 3.} $r+\delta\leq s$.

\noindent\textit{Proof.} Fix $i,j,i',j'\in[k]$ with $i\leq j$ and $i'>j'$. We need to show
\begin{equation}\label{eq:srsep0}
f(a_i,b_j)-f(a_{i'},b_{j'})\geq\delta.
\end{equation}

First assume case (I). Set $i^*=k-i'+1$ and $j^*=k-j'+1$. Then $f(a_i,b_j)\in I$ by Claim 2, and $f(x_{i^*},y_{i^*})\in I$ by (1)$_{i^*}$. Thus
\begin{equation}\label{eq:srsep1}
f(a_i,b_j)\geq f(x_{i^*},y_{i^*})-\e>r_{i^*}-\e,
\end{equation}
where the final inequality uses (I)$_{i^*}$. 
On the other hand, since $i^*<j^*$, by (I)$_{i^*}$ we also have
\begin{equation}\label{eq:srsep2}
f(a_{i'},b_{j'})=f(x_{i^*},y_{j^*})\leq r_{i^*}-(\delta+\e).
\end{equation}
Together, (\ref{eq:srsep1}) and (\ref{eq:srsep2}) imply (\ref{eq:srsep0}).

Now assume case (II). Then $f(a_{i'},b_{j'})\in I$ by Claim 2, and $f(x_i,y_i)\in I$ by  (1)$_i$. Thus 
\begin{equation}\label{eq:srsep3}
f(a_{i'},b_{j'})\leq f(x_i,y_i)+\e\leq r_i+\e,
\end{equation}
where the final inequality uses (II)$_i$. On the other hand, by (II)$_i$ we have
\begin{equation}\label{eq:srsep4}
f(a_i,b_j)=f(x_i,y_{j+1})\geq r_i+\delta+\e.
\end{equation}
Together, (\ref{eq:srsep3}) and (\ref{eq:srsep4}) imply (\ref{eq:srsep0}).\clqed\medskip

Finally, by Claim 3, $(a_1,\ldots,a_k)$ and $(b_1,\ldots,b_k)$ form a uniform $(k,\delta)$-ladder for $f$ with value $r$ and, moreover, this ladder is $\e$-tight by Claim 2.
\end{proof}

Next, we  extract proto-ladders  from trees. 


\begin{lemma}\label{lem:tighttree}
Fix finite covers $\calI$ and $\calJ$ of $[0,1]$. Set $p=|\calI|$ and $q=|\calJ|$. For $k\geq 1$, set 
\[
t_k=1+pq+(pq)^2+\ldots+(pq)^{k-1}.
\]
Then for any $k\geq 1$ and $\delta>0$,  if $f\colon X\times Y\to [0,1]$ admits a $(t_k,2\delta)$-tree, then $f$ admits an $(\calI,\calJ)$-covered $(k,\delta)$-proto-ladder.
\end{lemma}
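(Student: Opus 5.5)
We argue by induction on $k$, using $t_1=1$ and the recursion $t_k=1+pq\,t_{k-1}$. At each step we peel off one rung, $x_1,y_1$, from the top of the tree. Lemma \ref{lem:ramsey} (with $p$ colours) fixes the $I$-class required by (1)$_1$. Lemma \ref{lem:ramseyleaves} (with $q$ colours) fixes the $J$-class required by (2)$_1$. Property (3)$_1$ then comes for free from the tree structure, by choosing which side of the top node we descend into, much as in the proof of Theorem \ref{thm:hodgesfn1}.

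\emph{Base case $k=1$.} Take a $(1,2\delta)$-tree $x_{\semp}$, $(y_0,y_1)$ with value $r$. Set $x_1=x_{\semp}$, $y_1=y_0$, $y_2=y_1$ and $r_1=r$. Then (II)$_1$ holds. Since $\calI$ and $\calJ$ cover $[0,1]$, we may choose $I_1\ni f(x_1,y_1)$ and $J_1\ni f(x_1,y_2)$.

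\emph{Inductive step.} Let $t=t_k$ and fix a $(t,2\delta)$-tree $(x_\sigma)$, $(y_\eta)$, $(r_\sigma)$. Fix any leaf $\eta_*\in\bl{t}$. Colour each node $\sigma\in\bn{t}$ by some $I\in\calI$ containing $f(x_\sigma,y_{\eta_*})$; this gives a cover with $p$ pieces. Set $s=q\,t_{k-1}+1$, so that $t=p(s-1)+1$. Lemma \ref{lem:ramsey} then yields $I_1\in\calI$ and a tree embedding $\phi\colon\bn{s}\to\bn{t}$ with $f(x_{\phi(\sigma)},y_{\eta_*})\in I_1$ for all $\sigma$. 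Extend $\phi$ to a proper $\phi^*$ using Proposition \ref{prop:leavesemb}$(d)$.

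Now set $x_1=x_{\phi(\semp)}$ and $y_1=y_{\eta_*}$, and write $r=r_{\phi(\semp)}$. Choose the side $u$ as follows.
\begin{itemize}
\item If $f(x_1,y_1)>r+\delta$, take $u=0$ and $r_1=r+\delta$. Every leaf extending $\phi(\semp)\conc 0$ then gives value $\le r=r_1-\delta$, so (I)$_1$ holds for any later leaves from that side.
\item Otherwise, take $u=1$ and $r_1=r+\delta$. Leaves extending $\phi(\semp)\conc 1$ give value $\ge r+2\delta=r_1+\delta$, so (II)$_1$ holds.
\end{itemize}

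Let $\psi(\sigma)=\phi^*(u\conc\sigma)$. This is a proper tree embedding of $\bne{s-1}$, with $s-1=q\,t_{k-1}$. Its nodes lie in $\phi(\bn{s})$, and all its leaves extend $\phi(\semp)\conc u$. Colour the leaves $\eta\in\bl{s-1}$ by some $J\in\calJ$ containing $f(x_1,y_{\psi(\eta)})$. Lemma \ref{lem:ramseyleaves}, with $q$ parts each equal to $t_{k-1}$, gives $J_1$ and a proper embedding $\chi\colon\bne{t_{k-1}}\to\bne{s-1}$ whose leaves all receive colour $J_1$. By Propositions \ref{prop:leavesemb}$(b)$ and \ref{prop:embedtree}, $\psi\circ\chi$ yields a $(t_{k-1},2\delta)$-tree. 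Its nodes all satisfy $f(x,y_1)\in I_1$. Its leaves all satisfy $f(x_1,y)\in J_1$ together with the (I)$_1$ or (II)$_1$ inequality.

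Apply the induction hypothesis to this tree. This produces $x_2,\dots,x_k$, $y_2,\dots,y_{k+1}$, $r_2,\dots,r_k$ and intervals $I_2,\dots,I_k$, $J_2,\dots,J_k$ satisfying (1)$_i$–(3)$_i$ for $i\ge2$. Since these conditions involve only indices $\ge i$, they transfer verbatim to the shifted sequences. The conditions for $i=1$ hold by construction:
\begin{itemize}
\item (1)$_1$ holds because every $x_j$, including $x_1$, is indexed by a node in $\Im\phi$.
\item (2)$_1$ and (3)$_1$ hold because every $y_j$ with $j\ge2$ is indexed by a leaf of $\psi\circ\chi$.
\end{itemize}

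The main point needing care is this bookkeeping of embeddings. The leaf-Ramsey step must be performed inside the subtree $\psi$, so that the nodes it retains still lie in $\Im\phi$ and keep colour $I_1$. The height arithmetic must also close: we need $t=p(s-1)+1$ with $s-1=q\,t_{k-1}$, which gives exactly $t_k=1+pq\,t_{k-1}$.
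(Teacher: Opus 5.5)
Your proposal is correct and follows the paper's proof essentially step for step: the same recursion $t_k=1+pq\,t_{k-1}$, node-Ramsey via Lemma~\ref{lem:ramsey} to fix $I_1$, the same choice of side $u$ with value $r_{\phi(\semp)}+\delta$, and then leaf-Ramsey via Lemma~\ref{lem:ramseyleaves} inside $\psi$ to fix $J_1$. The one point to make explicit is that you are really inducting on a stronger statement than the lemma, namely that the proto-ladder can be drawn from the nodes and leaves of the given tree (as in the paper's $P(k)$). Your verification of (1)$_1$--(3)$_1$ needs $x_2,\dots,x_k$ and $y_2,\dots,y_{k+1}$ to be indexed by $\psi\circ\chi$, and your base case and inductive step already produce this stronger form.
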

\begin{proof}
To ease notation, given arbitrary functions $\phi\colon U\to V$ and $\psi \colon V\to W$, we use concatenation $\psi\phi$ for the composition $\psi\circ \phi\colon U\to W$. 

Fix $\delta>0$ and $f\colon X\times Y\to [0,1]$. Given $k\geq 1$, let $P(k)$ be the following statement: \medskip

\noindent\textit{Statement of $P(k)$.} If $(x_\sigma: \sigma\in\bn{t_k})$ and $(y_\eta:\eta\in\bl{t_k})$ form a $(t_k,2\delta)$-tree for $f$ with values $(r_\sigma:\sigma\in\bn{t_k})$,   then there are maps $\alpha\colon[k]\to \bn{t_k}$ and $ \beta\colon [k+1]\to \bl{t_k}$ such that $(x_{\alpha(1)},\ldots,x_{\alpha(k)})$, $(y_{\beta(1)},\ldots,y_{\beta(k+1)})$, and $(r_{\alpha(1)}+\delta,\ldots,r_{\alpha(k)}+\delta)$ form an $(\calI,\calJ)$-covered $(k,\delta)$-proto-ladder for $f$. \medskip

We prove $P(k)$ holds for all $k\geq 1$ by induction. \medskip

\noindent\textit{Base Case:} Note that $t_1=1$. Suppose $f$ admits a $(1,2\delta)$-tree given by $x_{\semp}\in X$, $y_0,y_1\in Y$, and $r_{\semp}\in[0,1]$. Since $f(x_{\semp},y_0)\leq r_{\semp}$ and $f(x_{\semp},y_1)\geq r_{\semp}+2\delta$, one easily verifies that $x_{\semp}$, $(y_0,y_1)$, and $r_{\semp}+\delta$ form an $(\calI,\calJ)$-covered $(1,\delta)$-proto-ladder for $f$, witnessed by any $I_1\in\calI$ containing $f(x_{\semp},y_0)$ and any $J_1\in\calJ$ containing $f(x_{\semp},y_1)$. So, in particular, we may set $\alpha\colon[1]\to \bn{1}$ and $\beta\colon[2]\to\bl{1}$ such that $\alpha(1)=\emp$, $\beta(1)=0$, and $\beta(2)=1$.\medskip  

\noindent\textit{Induction Step:} Fix $k>1$ and suppose $P(k-1)$ holds.  Set $\ell=qt_{k-1}+1$ and note that $t_k=p\ell-p+1$ by the definition of $t_k$. Suppose 
\[
\bar{x}^*=(x_\sigma: \sigma\in\bn{t_k}),\quad \bar{y}^*=(y_\eta:\eta\in\bl{t_k}),\quad\text{and}\quad \bar{r}^*=(r_\sigma:\sigma\in\bn{t_k})
\]
 form a $(t_k,2\delta)$-tree for $f$. Fix any $\eta_*\in \bl{t_k}$. For $I\in\calI$, define
\[
C_I=\{\sigma\in\bn{t_k}:f(x_\sigma,y_{\eta_*})\in I\}.
\]
Since $t_k=p\ell-p+1$, we may apply Lemma \ref{lem:ramsey} to obtain $I_1\in\calI$ and a tree embedding $\phi\colon \bn{\ell}\to \bn{t_k}$ whose image is contained in $C_{I_1}$. By Proposition \ref{prop:leavesemb}$(d)$, we can extend $\phi$ to a proper tree embedding $\phi^*\colon \bne{\ell}\to \bne{t_k}$. 

Set
\[
u=\begin{cases}
0 & \text{if $f(x_{\phi(\semp)},y_{\eta_*})>r_{\phi(\semp)}+\delta$}\\
1 & \text{if $f(x_{\phi(\semp)},y_{\eta_*})\leq r_{\phi(\semp)}+\delta$.}
\end{cases}
\]
Define $\psi\colon \bn{\ell}\to\bne{t_k}$ such that $\psi(\sigma)=\phi^*(u\conc\sigma)$. Then  $\psi$ is a proper tree embedding by Proposition \ref{prop:leavesemb}$(b)$ and the fact that $\sigma\mapsto u\conc\sigma$ is a proper tree embedding from $\bn{\ell}$ to $\bne{\ell}$. 
For $J\in\calJ$, define 
\[
\tilde{C}_J=\{\eta\in \bl{\ell-1}:f(x_{\phi(\semp)},y_{\psi(\eta)})\in J\}.
\]
Since $\ell-1=qt_{k-1}$, we may apply Lemma \ref{lem:ramseyleaves} to obtain $J_1\in\calJ$ and a proper tree embedding $\theta\colon \bne{t_{k-1}}\to\bn{\ell}$ such that $\theta(\bl{t_{k-1}})\seq \tilde{C}_{J_1}$. 

Now set $\chi=\psi\theta\colon \bne{t_{k-1}}\to\bne{t_k}$ (recall our use of concatenation for composition of functions). Then $\chi$ is a proper tree embedding  by Proposition \ref{prop:leavesemb}$(b)$. Therefore, by Proposition \ref{prop:embedtree},  the sequences
\[
(x_{\chi(\sigma)}:\sigma\in\bn{t_{k-1}}),\quad (y_{\chi(\eta)}:\eta\in \bl{t_{k-1}}),\quad\text{and}\quad (r_{\chi(\sigma)}:\sigma\in\bn{t_{k-1}})
\]
 form a $(t_{k-1},2\delta)$-tree for $f$. By $P(k-1)$, there are  $\alpha'\colon [k-1]\to \bn{t_{k-1}}$ and $\beta'\colon [k]\to \bl{t_{k-1}}$ such that 
 \[
\bar{x}'=(x_{\chi\alpha'(1)},\ldots,x_{\chi\alpha'(k-1)}),\quad \bar{y}'=(y_{\chi\beta'(1)},\ldots, y_{\chi\beta'(k)}),\quad\text{and}\quad \bar{r}'=(r_{\chi\alpha'(1)}+\delta,\ldots,r_{\chi\alpha'(k-1)}+\delta)
\]
 form an $(\calI,\calJ)$-covered $(k-1,\delta)$-proto-ladder for $f$, say witnessed by $I'_1,\ldots,I'_{k-1}\in\calI$ and $J'_1,\ldots,J'_{k-1}\in\calJ$.
Define $\alpha\colon [k]\to\bn{t_k}$ and $\beta\colon [k+1]\to \bl{t_k}$ such that 
\[
\alpha(i)=\begin{cases}
\phi(\emp) & \text{if $i=1$,}\\
\chi\alpha'(i-1) & \text{if $1<i\leq k$}
\end{cases}
\quad\text{and}\quad
\beta(i) =\begin{cases}
\eta_* & \text{if $i=1$,}\\
\chi\beta'(i-1) & \text{if $1<i\leq k+1$.}
\end{cases}
\]
 For $1<i\leq k$, set $I_i=I'_{i-1}$ and $J_i=J'_{i-1}$.  To finish the induction step, we show that 
 \[
 \bar{x}=(x_{\alpha(1)},\ldots,x_{\alpha(k)}),\quad \bar{y}=(y_{\beta(1)},\ldots,y_{\beta(k+1)}),\quad\text{and}\quad\bar{r}=(r_{\alpha(1)}+\delta,\ldots,r_{\alpha(k)}+\delta)
 \]
 form an $(\calI,\calJ)$-covered $(k,\delta)$-proto-ladder for $f$, witnessed by $I_1,\ldots,I_k$ and $J_1,\ldots,J_k$.
 
 For clarity, we write (1)$_i$, (2)$_i$, and (3)$_i$ for the properties in Definition \ref{def:proto} that we need to verify for $\bar{x}$, $\bar{y}$, and $\bar{r}$; and we write (1)$_i'$, (2)$_i'$, and (3)$'_i$ for the properties in Definition \ref{def:proto} that we know hold of $\bar{x}'$, $\bar{y}'$, and $\bar{r}'$. 
 
Fix $i\in[k]$. We need to verify (1)$_i$, (2)$_i$ and (3)$_i$. If $i\geq 2$ then these follow directly from (1)$'_{i-1}$, (2)$'_{i-1}$, and (3)$'_{i-1}$ by definition of $\alpha(i)$, $\beta(i)$, $I_i$, and $J_i$. So we may assume $i=1$. 

We first verify (1)$_1$. Fix $j\in [k]$. It suffices to show $\alpha(j)\in C_{I_1}$ since this will imply
\[
f(x_{\alpha(j)},y_{\beta(1)})=f(x_{\alpha(j)},y_{\eta_*})\in I_1,
\]
as desired. Now recall that $\Im(\phi)\seq C_{I_1}$, and hence it suffices to show $\alpha(j)\in \Im(\phi)$. If $j=1$ this  holds by definition. So suppose $j>1$. Then 
\begin{equation}\label{eq:alphaj}
\alpha(j)=\chi\alpha'(j-1)=\psi\theta\alpha'(j-1)=\phi^*(u\conc \theta\alpha'(j-1)).
\end{equation}
Recall that $\alpha'(j-1)\in \bn{t_{k-1}}$ and thus, since $\theta$ is proper, we have  $\theta\alpha'(j-1)\in\bn{\ell-1}$ by Proposition \ref{prop:leavesemb}$(c)$. So $u\conc\theta\alpha'(j-1)\in \bn{\ell}$, which then yields $\alpha(j)\in \Im(\phi)$ by (\ref{eq:alphaj}) and the fact that $\phi^*$ extends $\phi$.

Next we verify (2)$_1$. Fix $j\in[k+1]$ with $j>1$.  Then $\beta'(j-1)\in \bl{t_{k-1}}$ and $\chi(\bl{t_{k-1}})=\psi\theta(\bl{t_{k-1}})\seq \psi(\tilde{C}_{J_1})$, hence $\beta(j)=\chi\beta'(j-1)=\psi(\eta)$ for some $\eta\in \tilde{C}_{J_1}$. Therefore
\[
f(x_{\alpha(1)},y_{\beta(j)})=f(x_{\phi(\semp)},y_{\psi(\eta)})\in J_1.
\]

Finally, we verify (3)$_1$.  Suppose first $u=0$. We show that (I)$_1$ holds. First, we have
\[
f(x_{\alpha(1)},y_{\beta(1)})=f(x_{\phi(\semp)},y_{\eta_*})>r_{\phi(\semp)}+\delta.
\]
Now fix $j\in[k+1]$ with $j>1$. Then
\[
\beta(j)=\chi\beta'(j-1)=\psi\theta\beta'(j-1)=\phi^*(0\conc\theta\beta'(j-1)).
\]
Since $\phi^*$ is a tree embedding extending $\phi$, we therefore have $\phi(\emp)\conc 0\iseq \beta(j)$. Thus, since our initial sequences $\bar{x}^*$, $\bar{y}^*$, and $\bar{r}^*$ form a $(t_k,2\delta)$-tree for $f$, we have
\[
f(x_{\alpha(1)},y_{\beta(j)})=f(x_{\phi(\semp)},y_{\beta(j)})\leq r_{\phi(\semp)}=(r_{\alpha(1)}+\delta)-\delta,
\]
as desired.

 Suppose now $u=1$. We show that (II)$_1$ holds. First, we have 
 \[
 f(x_{\alpha(1)},y_{\beta(1)})=f(x_{\phi(\semp)},y_{\eta_*})\leq r_{\phi(\semp)}+\delta.
 \]
 Now fix $j\in[k+1]$ with $j>1$. As in the previous case, $\beta(j)=\phi^*(1\conc\theta\beta'(j-1))$, hence $\phi(\semp)\conc 1\iseq\beta(j)$, which implies 
\[
f(x_{\alpha(1)},y_{\beta(j)})=f(x_{\phi(\semp)},y_{\beta(j)})\geq r_{\phi(\semp)}+2\delta=(r_{\alpha(1)}+\delta)+\delta,
\]
as desired.
\end{proof}

Finally, we combine Lemmas \ref{lem:protoladder} and \ref{lem:tighttree} to prove Theorem \ref{thm:tight}.

\begin{delayedproof}{thm:tight}
Fix $k\geq 1$, $\delta,\e>0$, and $f\colon X\times Y\to [0,1]$. To ease notation, we find $t\leq (1/\e)^{O(k/\e^2)}$ such that if $f$ admits a $(t,2\delta+2\e)$-tree, then $f$ admits a uniform $\e$-tight $(k,\delta)$-ladder.\footnote{Note  this is  technically a stronger statement than  Theorem \ref{thm:tight} (after replacing $\e$ with $\e/2$), but the difference only affects  the absolute constant in the bound on $t$.} 

Set $p=\lceil\e\inv\rceil$ and define the cover $\calI$ of $[0,1]$ consisting of intervals $[(i-1)p\inv,ip\inv]$ for $i\in [p]$. Note that $|\calI|=p$ and each interval in $\calI$ has length $p\inv\leq\e$. Set $k^*=p^2(k-1)+1$. Define
\[
t=1+p^2+p^4+\ldots+p^{2k^*-2}=\frac{p^{2k^*}-1}{p^2-1}\leq (1/\e)^{O(k/\e^2)}.
\]

Now suppose $f$ admits a $(t,2\delta+2\e)$-tree. By choice of $t$, we may apply Lemma \ref{lem:tighttree} to obtain an $(\calI,\calI)$-covered $(k^*,\delta+\e)$-proto-ladder for $f$ consisting of sequences
\[
(x_1,\ldots,x_{k^*}),\quad (y_1,\ldots,y_{k^*+1}),\quad (r_1,\ldots,r_{k^*}),\quad (I_1,\ldots,I_{k^*}),\quad\text{and}\quad(J_1,\ldots,J_{k^*}).
\]
 Since $k^*=p^2(k-1)+1$ and $|\calI\times\calI|=p^2$, we can apply the pigeonhole principle to extract intervals $I,J\in\calI$ and a subsequence $1\leq \ell_1<\ldots<\ell_k\leq k^*$ such that for all $i,j\in[k]$, $(I_{\ell_i},J_{\ell_i})=(I_{\ell_j},J_{\ell_j})= (I,J)$. One can now check that the subsequences
\[
(x_{\ell_1},\ldots,x_{\ell_k}),\quad (y_{\ell_1},\ldots,y_{\ell_k},y_{k^*+1}),\quad\text{and}\quad (r_{\ell_1},\ldots,r_{\ell_k})
\]
form an $(I,J)$-covered $(k,\delta+\e)$-proto-ladder for $f$. Since $I$ and $J$ are intervals of length at most $\e$, we conclude from Lemma \ref{lem:protoladder}  that $f$ admits a uniform $\e$-tight $(k,\delta)$-ladder. 
\end{delayedproof}

\section{Agnostic trees}\label{sec:agnostic}

In this section, we explore an additional variation of $(t,\delta)$-trees, which is more directly analogous to the notion of an agnostic ladder. 

\begin{definition}\label{def:eptree}
Suppose $f\colon X\times Y\rightarrow [0,1]$ is a function. Given $t\geq 1$ and $\delta>0$, an \emph{agnostic $(t,\delta)$-tree} for $f$ consists of sequences $(x_{\sigma}: \sigma\in \bn{t})$ from $X$ and $(y_{\tau}: \tau\in \bl{t})$ from $Y$ 
such that, for all $\sigma\in \bn{t}$ and $\eta, \eta'\in \bl{t}$, if  $\sigma\conc 0\iseq \eta$ and  $\sigma\conc 1\iseq \eta'$ then 
\[
|f(x_{\sigma},y_{\eta})-f(x_{\sigma},y_{\eta'})|\geq\delta.
\]
\end{definition}

When applied to the indicator function of a binary relation, Definition \ref{def:eptree} is the same as the original notion of a $t$-tree (up to relabeling the vertices), provided $\delta\leq 1$. However, this is not the case for $[0,1]$-valued functions. As indicated after Proposition \ref{prop:leavesemb}, this is the primary reason that we needed to modify Hodges' notion of a tree map into our notion of a tree embedding. 

That said, one can still establish an approximate equivalence between trees and agnostic trees. In particular, it is easy to see that a uniform $(t,\delta)$-tree for a function $f$ is an agnostic $(t,\delta)$-tree. The following gives a suitable converse. 

\begin{theorem}\label{thm:OPsharp}
Fix  $\delta,\e>0$ with $\e<1$. Given $t\geq 1$, define 
\[
T(t,\e)=\frac{\lceil 2\e\inv \rceil ^t-1}{\lceil 2\e\inv \rceil -1}.
\]
Then for any $t\geq 1$, if $f\colon X\times Y\to [0,1]$ admits an agnostic $(T(t,\e),\delta+\e)$-tree, then $f$ admits a  $(t,\delta)$-tree.
\end{theorem}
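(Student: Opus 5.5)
The plan is to prove, by induction on $t$, a slightly stronger statement: if $(x_\sigma:\sigma\in\bn{T(t,\e)})$ and $(y_\eta:\eta\in\bl{T(t,\e)})$ form an agnostic $(T(t,\e),\delta+\e)$-tree for $f$, then $f$ admits a $(t,\delta)$-tree whose nodes are among the $x_\sigma$ and whose leaves are among the $y_\eta$. Set $K=\lceil 2\e\inv\rceil$, so that $T(1,\e)=1$ and $T(t,\e)=K\,T(t-1,\e)+1$.

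\emph{Why intervals are needed.} At a node $\sigma$ of an agnostic tree, every left value $f(x_\sigma,y_\eta)$ is far from every right value $f(x_\sigma,y_{\eta'})$. However, the direction of the inequality may vary from pair to pair. For example, the left values might lie on both sides of the right values. The fix is to use Lemma \ref{lem:ramseyleaves} to force all left values at the root into one short interval, and likewise all right values. Then the separation condition forces a consistent direction. Another subtlety is that the left and right children may need to be swapped. Definition \ref{def:eptree1} only constrains each node relative to its own $0/1$ children, so swapping the two subtrees below a node is harmless: it amounts to composing with the obvious relabeling of $\bne{t}$.

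\emph{Base case and inductive step.} For $t=1$, we have $T(1,\e)=1$, a single root $x_{\semp}$ and leaves $y_0,y_1$ with $|f(x_{\semp},y_0)-f(x_{\semp},y_1)|\geq\delta+\e$. Ordering the two leaves so that the smaller value comes first, and taking $r$ to be that smaller value, gives a $(1,\delta)$-tree. For $t>1$, write $T=T(t,\e)$ and $T'=T(t-1,\e)$, and cover $[0,1]$ by $K$ intervals of length at most $\e/2$. For each $u\in\{0,1\}$, color each leaf $\mu\in\bl{T-1}$ by which interval contains $f(x_{\semp},y_{u\conc\mu})$. Since $T-1=K\,T'$, Lemma \ref{lem:ramseyleaves} with $t_1=\dots=t_K=T'$ gives, for each $u$, an interval $I_u$ and a proper tree embedding $\theta_u\colon\bne{T'}\to\bne{T-1}$ whose leaves land in the color class of $I_u$. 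The subtree $(x_{u\conc\theta_u(\sigma)})$, $(y_{u\conc\theta_u(\eta)})$ is again an agnostic $(T',\delta+\e)$-tree; this follows by the same argument as Proposition \ref{prop:embedtree}, since only the prefix relations are used. The induction hypothesis then yields a $(t-1,\delta)$-tree whose leaves $y$ all satisfy $f(x_{\semp},y)\in I_u$.

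\emph{Assembly.} Every value in $I_0$ that is actually attained (by some leaf $y_{0\conc\cdot}$) differs by at least $\delta+\e$ from every attained value in $I_1$. Since both intervals have length at most $\e/2<\delta+\e$, all attained values in one interval lie strictly below all attained values in the other, with gap at least $\delta+\e$. Relabel if necessary so that the lower one is $I_0$. Let $r$ be the maximum of $f(x_{\semp},y)$ over the finitely many leaves $y$ of the left $(t-1,\delta)$-tree. Then every leaf of the right tree satisfies $f(x_{\semp},y)\geq r+\delta+\e\geq r+\delta$. Placing $x_{\semp}$ with value $r$ as the root above the two $(t-1,\delta)$-trees, left and right in this order, gives a $(t,\delta)$-tree whose nodes and leaves come from the original tree. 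This closes the induction.

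The main obstacle is the direction-inconsistency at each node, which is exactly why agnostic trees do not trivially yield trees. The leaf-coloring Lemma \ref{lem:ramseyleaves}, applied separately to the two halves below the root, is what resolves it, and its cost of a factor $K$ per level is what produces the bound $T(t,\e)$. A secondary point to handle carefully is that the induction must track that the output leaves are leaves of the input subtree. This is what ensures that the root's values on those leaves stay inside $I_u$.
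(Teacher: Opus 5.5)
Your proposal is correct and follows essentially the same route as the paper: induction on $t$ while tracking that the output leaves come from the input tree, Lemma \ref{lem:ramseyleaves} applied separately to the two halves below the root with $K=\lceil 2\e\inv\rceil$ colors and $t_1=\cdots=t_K=T(t-1,\e)$, and a reordering of the two resulting subtrees before attaching $x_{\semp}$ as the root. The only difference is the choice of root value: the paper uses the grid endpoint $\alpha_0$ and absorbs a loss of $2/K\leq\e$, whereas you use the maximum value attained on the left leaves, which keeps the full gap $\delta+\e$ (so your assembly step would work with any cover by intervals shorter than $\delta+\e$).
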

\begin{proof}
Fix $\delta,\e>0$ with $\e<1$, and fix a function $f\colon X\times Y\to [0,1]$. Given $t\geq 1$, let $P(t)$ be the following statement: If $f$ admits an agnostic $(T(t,\e),\delta+\e)$-tree, then $f$ admits a $(t,\delta)$-tree whose leaves are contained in the leaves of the initial agnostic tree. We show $P(t)$ holds for all $t\geq 1$ by induction. \medskip

\textit{Base Case}: Note that $T(1,\e)=1$.  It is easy to see that an agnostic $(1,\delta)$-tree for $f$ can be relabeled to be a $(1,\delta)$-tree. Thus the same is true of an agnostic $(1,\delta+\e)$-tree, which establishes $P(1)$.\medskip

\textit{Induction Step}: Fix $t>1$ and assume $P(t-1)$ holds. Set $T=T(t,\e)$ and suppose $f$ admits an agnostic $(T,\delta+\e)$-tree  with nodes $(x_{\sigma}: \sigma\in \bn{T})$ and leaves $(y_{\eta}: \eta\in \bl{T})$.  Set $K=\lceil 2\e\inv\rceil$ and $\Sigma=\{\frac{i}{K}:i\in[K]\}$. Note that $T=(K^t-1)/(K-1)$. \medskip

\noindent\textit{Claim 1.} There are $\alpha_0,\alpha_1\in \Sigma$ and, for each $u\in\{0,1\}$, a $(t-1,\delta)$-tree for $f$ with nodes $(c_{\sigma}^u: \sigma\in \bn{t-1})$, leaves $(d_{\eta}^u: \eta\in \bl{t-1})$, and values $(r^u_{\sigma}: \sigma\in \bn{t-1})$ satisfying the following properties:
\begin{enumerate}[$(i)$]
\item For all $u\in \{0,1\}$ and $\eta\in \bl{t-1}$, $d^u_\eta=y_{u\sconc\eta'}$ for some $\eta'\in \bl{T-1}$.
\item For all $u\in\{0,1\}$ and $\eta\in\bl{t-1}$, $\alpha_u-\frac{1}{K}\leq f(x_{\semp},d^u_\eta)\leq \alpha_u$.
\item $|\alpha_0-\alpha_1|\geq \delta+\e-\frac{1}{K}$.
\end{enumerate}

\noindent\textit{Proof.} For each $u\in\{0,1\}$, $\sigma\in \bn{T-1}$, and $\eta\in\bl{T-1}$, set $a^u_{\sigma}=x_{u\sconc\sigma}$ and $b^u_{\eta}=y_{u\sconc \eta}$. For each $u\in\{0,1\}$, define the cover 
\[
\bl{T-1}=\bigcup_{\alpha\in\Sigma}C^u_\alpha
\]
where, for each $\alpha\in\Sigma$,
$$
C^u_{\alpha}:=\left\{\eta\in \bl{T-1}: \alpha-{\textstyle\frac{1}{K}}\leq f(x_{\semp},b_{\eta}^u)\leq  \alpha\right\}.
$$
Set $T'=T(t-1,\e)$. One can check that $KT'=T-1$. So  Lemma \ref{lem:ramseyleaves} applied to the above covers  yields, for each $u\in \{0,1\}$, some $\alpha_u\in \Sigma$ and a proper tree embedding $\phi_u:\bne{T'}\rightarrow \bn{T}$ with $\phi_u(\bl{T'})\seq C^u_{\alpha_u}$.

Now, given $\sigma\in \bn{T'}$,  $\eta\in \bl{T'}$, and $u\in\{0,1\}$, define 
$$
e_{\sigma}^u=a_{\phi_u(\sigma)}^u\text{ and }g_{\eta}^u=b_{\phi_u(\eta)}^u.
$$
For each $u\in\{0,1\}$, since $\phi_u$ is a tree embedding it follows (using an argument nearly identical to the proof of Proposition \ref{prop:embedtree})  that  $(e_{\sigma}^u: \sigma\in \bn{T'})$ and $(g_{\eta}^u: \eta\in \bl{T'})$ form an agnostic $(T',\delta+\e)$-tree for $f$. 
By choice of $T'$, we can apply $P(t-1)$ to these trees. This yields, for each $u\in\{0,1\}$, a $(t-1,\delta)$-tree for $f$ with nodes $(c_{\sigma}^u: \sigma\in \bn{t-1})$, leaves $(d_{\eta}^u: \eta\in \bl{t-1})$, and values $(r^u_{\sigma}: \sigma\in \bn{t-1})$, such that for all $\eta\in \bl{t-1}$, $d^u_\eta$ is of the form $g^u_{\eta'}=b^u_{\phi_u(\eta')}$ for some $\eta'\in\bl{T'}$. By construction, and since $\phi_u(\bl{T'})\seq C^u_{\alpha_u}$, we have  conditions $(i)$ and $(ii)$. For condition $(iii)$, fix some arbitrary $\eta\in\bl{T'}$. Since  $\phi_u(\bl{T'})\seq C^u_{\alpha_u}$, we have
\[
\delta+\e\leq \left|f(x_{\semp},y_{0\sconc \phi_0(\eta)})-f(x_{\semp},y_{1\sconc\phi_1(\eta)})\right|
=\left|f(x_{\semp},b_{\phi_0(\eta)}^0)-f(x_{\semp}, b_{\phi_1(\eta)}^1)\right|\leq |\alpha_0-\alpha_1|+{\textstyle\frac{1}{K}}.
\]
So $|\alpha_0-\alpha_1|\geq \delta+\e-\frac{1}{K}$.\clqed\medskip

Claim 1 is all we will need to finish the proof. Since the statement is symmetric in $u\in\{0,1\}$, we may thus assume without loss of generality that $\alpha_0\leq\alpha_1$. So by Claim 1$(iii)$, 
\begin{equation}\label{split}
\alpha_1-\alpha_0\geq \delta+\e-{\textstyle\frac{1}{K}}.
\end{equation}

Define $a_{\semp}=x_{\semp}$ and $r_{\semp}=\alpha_0$. For each $\sigma\in \bn{t-1}$ and $u\in\{0,1\}$, set  $a_{u\sconc \sigma}=c_{\sigma}^u$ and $r_{u\sconc \sigma}=r^u_{\sigma}$.
For each $\eta\in \bl{t-1}$ and $u\in \{0,1\}$, set $b_{u\sconc \eta}=d^u_{\eta}$. We now check that this defines a $(t,\delta)$-tree for $f$ with nodes $(a_\sigma:\sigma\in \bn{t})$, leaves $(b_\eta:\eta\in \bl{t})$, and values $(r_\sigma:\sigma\in \bn{t})$.   Toward this end, assume we are given $\sigma\in \bn{t}$ and $\eta,\eta'\in \bl{t}$ satisfying $\sigma\conc 0\iseq \eta$ and $\sigma\conc 1\iseq \eta'$.  We want  to show $f(a_{\sigma},b_{\eta})\leq r_{\sigma}$ and $f(a_{\sigma},b_{\eta'})\geq r_\sigma+\delta$.

Assume first $\sigma\neq \emp$ and write $\sigma=u\conc \tau$ for some $u\in \{0,1\}$ and $\tau\in \bn{t-1}$. Then we can write $\eta=u\conc \nu$ and $\eta'=u\conc \nu'$ for some $\nu,\nu'\in \bl{t-1}$ such that $\tau\conc 0\iseq \nu$ and $\tau\conc 1\iseq \nu'$. Therefore
\begin{align*}
f(a_{\sigma},b_{\eta})&=f(c_{\tau}^u, d_{\nu}^u)\leq r_{\tau}^u=r_{\sigma} \text{ and }\\
f(a_{\sigma},b_{\eta'})&=f(c_{\tau}^u, d_{\nu'}^u)\geq r_{\tau}^u+\delta=r_{\sigma}+\delta.
\end{align*}

Assume now $\sigma=\emp$. Then $\eta=0\conc \tau$ and $\eta'=1\conc \tau'$ for some $\tau,\tau'\in \bl{t-1}$.  So by Claim 1$(ii)$, choice of $r_{\semp}$, and (\ref{split}), we have
\begin{align*}
f(a_{\semp},b_\eta) &= f(x_{\semp},d^0_\tau)\leq \alpha_0=r_{\semp} \text{ and }\\
f(a_{\semp},b_{\eta'})&=f(x_{\semp},d^1_{\tau'})\geq {\textstyle \alpha_1-\frac{1}{K}}\geq \alpha_0+\delta+\e-{\textstyle\frac{2}{K}}\geq r_{\semp}+\delta.
\end{align*}

Finally, by Claim 1$(i)$, each leaf $b_\eta$ of the above $(t,\delta)$-tree is of the form $y_{\eta'}$ for some $\eta'\in \bl{T}$. So we have proved $P(t)$ holds.
\end{proof}

\appendix

\section{Further results on trees and ladders}\label{sec:moreladders}

\subsection{Implications between ladders}\label{sec:ladderimps}

Recall from Subsection \ref{sec:ladders} that an $\e$-tight $(k,\delta)$-ladder is a special case of a uniform $(k,\delta)$-ladder, which itself is a special case of a $(k,\delta)$-ladder. Moreover, a uniform $(k,\delta)$-ladder is a special case of an agnostic $(k,\delta)$-ladder. In this section we state suitable converse implications between these notions. The first is an easy pigeonhole argument.

\begin{proposition}\label{prop:lad-unilad}
Fix $k\geq 1$, $\delta,\e>0$, and $f\colon X\times Y\to [0,1]$. Suppose $f$ admits a $(K,\delta+\epsilon)$-ladder, where $K=\lceil \epsilon\inv\rceil(k-1)+1$. Then $f$ admits a uniform $(k,\delta)$-ladder.
\end{proposition}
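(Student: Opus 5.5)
The plan is a pigeonhole argument on the values of the ladder, in the spirit of the proof of Proposition \ref{prop:unitreeimp}. Let $x_1,\ldots,x_K\in X$, $y_1,\ldots,y_K\in Y$, and $r_1,\ldots,r_K\in[0,1]$ form a $(K,\delta+\e)$-ladder for $f$. Set $M=\lceil\e\inv\rceil$ and cover $[0,1]$ by the $M$ intervals $[\frac{i-1}{M},\frac{i}{M}]$ for $i\in[M]$, each of length $\frac1M\leq\e$. Assign to each index $i\in[K]$ some interval containing $r_i$. Since $K=M(k-1)+1$, pigeonhole yields some $m\in[M]$ and indices $\ell_1<\ldots<\ell_k$ in $[K]$ whose values all lie in $[\frac{m-1}{M},\frac{m}{M}]$.

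Next, pass to the subladder $a_i=x_{\ell_i}$ and $b_i=y_{\ell_i}$ for $i\in[k]$, with common value $r=\frac{m}{M}$. The ordering of indices is preserved because $\ell$ is increasing. So if $i\leq j$, then $\ell_i\leq\ell_j$, and
\[
f(a_i,b_j)\geq r_{\ell_i}+\delta+\e\geq \tfrac{m-1}{M}+\delta+\e\geq \tfrac{m}{M}+\delta=r+\delta,
\]
using $\frac1M\leq\e$. If instead $i>j$, then $f(a_i,b_j)\leq r_{\ell_i}\leq \frac mM=r$. Hence the sequences form a uniform $(k,\delta)$-ladder with value $r\in[0,1]$.

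No step here is a real obstacle. The only point to check is which endpoint of the interval to take as the uniform value: it must be the right endpoint $\frac mM$, so that the lower inequality holds trivially and the $\e$ slack absorbs the interval width in the upper inequality. The case $k=1$ is also covered, since then $K=1$ and the argument goes through unchanged.
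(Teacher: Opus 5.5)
Your proof is correct and matches the paper's argument: pigeonhole the values $r_i$ into $\lceil\epsilon^{-1}\rceil$ classes of width at most $\epsilon$, then pass to an increasing subsequence of length $k$ lying in a single class. The only difference is cosmetic: the paper takes the uniform value to be $\max\{r_{\ell_1},\ldots,r_{\ell_k}\}$ rather than the right endpoint $\frac{m}{M}$, and both choices work for the same reason.
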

\begin{proof}
Suppose $x_1,\ldots,x_K\in X$ and $y_1,\ldots,y_K\in Y$ form a $(K,\delta+\e)$-ladder for $f$ with values $r_1,\ldots,r_K$. By pigeonhole, there is a subsequence $1\leq \ell_1<\ldots<\ell_k\leq K$ with $|r_{\ell_i}- r_{\ell_j}|\leq\e$ for all $i,j\in [k]$. It is now immediate that $(x_{\ell_1},\ldots,x_{\ell_k})$ and $(y_{\ell_1},\ldots,y_{\ell_k})$ form a uniform $(k,\delta)$-ladder for $f$ with value $r=\max\{r_{\ell_1},\ldots,r_{\ell_k}\}$.
\end{proof}

Next we prove a corresponding implication from agnostic ladders to uniform ladders. A statement to this effect is established by \cite[Proposition A.1]{CPC}. Our proof is nearly the same, except that we obtain a better bound through a more economical use of Ramsey's theorem. We also include the proof in order to draw a connection to tight ladders afterward. 

\begin{proposition}\label{prop:aglad-unilad}
Fix $k\geq 1$, $\delta,\epsilon>0$, and  $f\colon X\times Y\to [0,1]$. Suppose $f$ admits an agnostic $(K,\delta+\e)$-ladder, where $K=(2\lceil \e\inv\rceil)^{4\lceil \e\inv \rceil k}$. Then $f$ admits a uniform $(k,\delta)$-ladder.
\end{proposition}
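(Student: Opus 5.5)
Let $x_1,\ldots,x_K$ and $y_1,\ldots,y_K$ form the agnostic $(K,\delta+\e)$-ladder, and set $p=\lceil\e\inv\rceil$. The plan is to use Ramsey's theorem to make the configuration homogeneous, and then read off a common threshold. We color pairs $\{i<j\}$ from $[K]$ by two pieces of data.

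The first piece is a bit recording which of $f(x_i,y_j)-f(x_j,y_i)\geq\delta+\e$ or $f(x_j,y_i)-f(x_i,y_j)\geq\delta+\e$ holds. The second piece is a pair of indices in $[p]$ recording the intervals $[(a-1)/p,a/p]$ containing $f(x_i,y_j)$ and $f(x_j,y_i)$.

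These are at most $2p^2$ colors. A standard multicolor Ramsey bound (e.g.\ $R(n;c)\leq c^{cn}$) with $n$ of order $2k$ then gives a monochromatic subset of size about $2k$ once $K\geq(2p)^{4pk}$. To match the stated bound exactly, we would be more economical: only the ``upper'' value interval and the ``lower'' value interval together with the bit matter. One can reduce to a coloring by $2p$ colors, for instance by coloring with the bit and the interval of the larger of the two values, and arguing one-sidedly. The exponent $4pk$ is tailored to $R(2k\text{ or so};2p)\leq(2p)^{2p\cdot 2k}$.

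On a monochromatic set $L=\{\ell_1<\cdots<\ell_m\}$, suppose the bit says $f(x_i,y_j)\geq f(x_j,y_i)+\delta+\e$ for all $i<j$ in $L$. Suppose also that all values $f(x_i,y_j)$ with $i<j$ lie in a common interval $A$, and all values $f(x_j,y_i)$ with $i<j$ lie in a common interval $B$, each of length at most $\e$. Set $r=\max B$. Then every ``below-diagonal'' value is $\leq r$. Every ``above-diagonal'' value is $\geq \min A\geq \max A-\e$, and $\max A\geq$ (some below value)$+\delta+\e$. Hence the above-diagonal values are at least $r'+\delta$ for some $r'$; more carefully, take $r=\max B$ and use $\min A\geq \max B+\delta$. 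This last inequality follows because for any $a\in A$, $b\in B$ realized by a common pair we have $a\geq b+\delta+\e$, and the widths are at most $\e$.

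The remaining issue is the diagonal: a uniform ladder requires $f(x_i,y_i)\geq r+\delta$, but the agnostic ladder says nothing about diagonal entries. We fix this by shifting the indices. Take $a_s=x_{\ell_{2s-1}}$ and $b_s=y_{\ell_{2s}}$ for $s\in[k]$, which is why we want $m\geq 2k$. Now for $s\leq t$ we have $2s-1<2t$, so $f(a_s,b_t)\in A$. For $s>t$ we have $2s-1>2t$, so $f(a_s,b_t)\in B$. In the opposite-bit case, we reverse the order of indices instead.

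So $a$ and $b$ form a uniform $(k,\delta)$-ladder with value $\max B$, and in fact an $\e$-tight one. That tightness is the promised connection to tight ladders.

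The main obstacle is bookkeeping the Ramsey numbers to land exactly on $K=(2p)^{4pk}$. With a full $2p^2$-coloring the naive bound is somewhat worse. I would first try a two-step approach: apply Ramsey with $2p$ colors (the bit and the interval of $f(x_i,y_j)$), then observe that $B$-tightness is unnecessary for uniformity, taking $r=\max$ of the below values directly. Uniformity only needs $\min A-\e\geq$ every below value minus nothing. Indeed, each below value $b=f(x_j,y_i)$ satisfies $b\leq f(x_i,y_j)-\delta-\e\leq \min A-\delta$, so $r:=\min A-\delta$ works without any $B$-interval.

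Thus $2p$ colors suffice. The Erdős–Szekeres-type bound $R(2k;2p)\leq(2p)^{2p\cdot 2k}$ then gives exactly $K=(2p)^{4pk}$.
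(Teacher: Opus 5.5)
Your final argument is correct and is essentially the paper's proof: you color pairs $i<j$ by the direction bit and the length-$\le\epsilon$ interval $I$ containing $f(x_i,y_j)$ ($2\lceil\epsilon^{-1}\rceil$ colors), apply the Erd\H{o}s--Szekeres bound $R_{2p}(2k)\le(2p)^{4pk}$, and pair odd-indexed $x$'s with even-indexed $y$'s at value $\inf I-\delta$, reversing the sequence in the other case. In that opposite-bit case the interval sits on the smaller values after reversal, so the value should be $\sup I$ as in the paper rather than $\inf I-\delta$; equivalently, color by the interval of the larger of the two values as you suggest, which makes both cases identical.
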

\begin{proof}
Given integers $m,t\geq 1$, let $R_t(m)$ denote the minimal integer $n$ so that  any coloring of the edges of the complete graph on $n$ vertices with $t$ colors admits a complete monochromatic  graph on $m$ vertices. A well-known result of  Erd\"{o}s and Szekeres \cite{ErdSz} yields the bound $R_t(m)\leq t^{tm}$. In particular, note that $K\geq R_{2\lceil\e\inv\rceil}(2k)$. 

Now suppose $x_1,\ldots,x_K\in X$ and $y_1,\ldots,y_K\in Y$ form an agnostic $(K,\delta+\e)$-ladder for $f$. Let $\ell=\lceil\e\inv\rceil$ and let $\calI$ be a partition of $[0,1]$ consisting of intervals of length at most $\e$, with $|\calI|=\ell$. Consider the $2\ell$-coloring of the $2$-element subsets of $[K]$ where, given $1\leq i<j\leq K$, we color $\{i,j\}$  by the pair $(I,u)\in\calI\times\{0,1\}$ such that $f(x_i,y_j)\in I$ and $u=0$ if and only if $f(x_j,y_i)\leq f(x_i,y_j)-\delta-\e$ (so $u=1$ if and only if $f(x_j,y_i)\geq f(x_i,y_j)+\delta+\e$). By choice of $K$, we obtain  $a_1,\ldots, a_{2k}\in X$, $b_1,\ldots,b_{2k}\in Y$, and  $I\in\calI$ such that either:
\begin{enumerate}[$(i)$]
\item if $1\leq i<j\leq 2k$ then $f(a_i,b_j)\in I$ and $f(a_j,b_i)\leq f(a_i,b_j)-\delta-\e$, or
\item if $1\leq i<j\leq 2k$ then $f(a_i,b_j)\in I$ and $f(a_j,b_i)\geq f(a_i,b_j)+\delta+\e$.
\end{enumerate}
In case $(i)$, it is straightforward to check that $(a_1,a_3,\ldots,a_{2k-1})$ and $(b_2,b_4,\ldots,b_{2k})$ form a uniform $(k,\delta)$-ladder for $f$ with value $r=\inf I-\delta$. In case $(ii)$, it is straightforward to check that $(a_{2k},a_{2k-2},\ldots,a_2)$ and $(b_{2k-1},b_{2k-3},\ldots,b_{1})$ form a uniform $(k,\delta)$-ladder for $f$ with value $r=\sup I$. (These verifications are similar to the   proof of \cite[Proposition A.2]{CPC}).
\end{proof}

Note that the previous proof automatically obtains  tightness for ``half" of the resulting uniform ladder, witnessed by the interval $I$. One can obtain a fully tight ladder by modifying the proof to use a $2\lceil\e\inv\rceil^2$-coloring by triples $(I,J,u)$ with $f(x_i,y_j)\in I$, $f(x_j,y_i)\in J$, and $u$ as before. On the other hand, the $u$ value is  only used to reconcile the ``agnostic" aspect of the ladder, and thus is not needed if one starts with a non-agnostic ladder. We record these observations in the following proposition.

\begin{proposition}\label{prop:moreladimp}
Fix $k\geq 1$, $\delta,\e>0$, and $f\colon X\times Y\to[0,1]$. 
\begin{enumerate}[$(a)$]
\item If $f$ admits an agnostic $((2\lceil\e\inv\rceil)^{8\lceil\e\inv\rceil^2k},\delta+\e)$-ladder then it admits an $\e$-tight $(k,\delta)$-ladder.
\item If $f$ admits an $(\lceil\e\inv\rceil^{4\lceil\e\inv\rceil^2k},\delta+\e)$-ladder then it admits an $\e$-tight $(k,\delta)$-ladder.
\end{enumerate}
\end{proposition}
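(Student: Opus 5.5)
For both parts I would follow the proof of Proposition \ref{prop:aglad-unilad}. The plan is to colour pairs by the intervals containing both off-diagonal values, apply the Erd\H{o}s--Szekeres bound $R_t(m)\leq t^{tm}$, and then read off a ladder from every other index of the monochromatic set. Throughout, set $\ell=\lceil\e\inv\rceil$ and let $\calI$ be a partition of $[0,1]$ into $\ell$ intervals, each of length at most $\e$.

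\emph{Part (a).} Let $x_1,\ldots,x_K$ and $y_1,\ldots,y_K$ be an agnostic $(K,\delta+\e)$-ladder. For $i<j$, colour $\{i,j\}$ by the triple $(I,J,u)\in\calI\times\calI\times\{0,1\}$ determined as follows: $f(x_i,y_j)\in I$, $f(x_j,y_i)\in J$, and $u=0$ if and only if $f(x_j,y_i)\leq f(x_i,y_j)-\delta-\e$. This uses $t=2\ell^2$ colours, and $R_{2\ell^2}(2k)\leq(2\ell^2)^{4\ell^2k}\leq(4\ell^2)^{4\ell^2k}=(2\ell)^{8\ell^2k}$, so the hypothesis gives a monochromatic set of size $2k$, say $a_1,\ldots,a_{2k}$ and $b_1,\ldots,b_{2k}$.

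Suppose $u=0$, and take $(a_1,a_3,\ldots,a_{2k-1})$ and $(b_2,\ldots,b_{2k})$ with $r=\inf I-\delta$.
\begin{enumerate}[$(i)$]
\item Entries with $i\leq j$ are $f(a_{2i-1},b_{2j})$ with $2i-1<2j$. They lie in $I$, so each is at least $\inf I=r+\delta$.
\item Entries with $i>j$ are $f(a_{2i-1},b_{2j})$ with $2j<2i-1$. They lie in $J$, and each is at most $f(a_{2j},b_{2i-1})-\delta-\e\leq\sup I-\delta-\e\leq\inf I-\delta=r$.
\end{enumerate}
So this is a uniform $(k,\delta)$-ladder, and it is $\e$-tight, witnessed by $I$ and $J$. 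The case $u=1$ is symmetric: reverse the indices exactly as in case $(ii)$ of Proposition \ref{prop:aglad-unilad}, and again take $r=\inf(\text{upper interval})-\delta$.

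\emph{Part (b).} Now start from a genuine $(K,\delta+\e)$-ladder with values $r_1,\ldots,r_K$. The ladder orientation is already fixed, so $u$ is unnecessary. I colour $\{i,j\}$ with $i<j$ by $(I,J)$, where $f(x_i,y_j)\in I$ and $f(x_j,y_i)\in J$. This uses $\ell^2$ colours, and $R_{\ell^2}(2k)\leq\ell^{4\ell^2k}$, which gives a monochromatic set of size $2k$. Again take $a_i=x_{2i-1}$ and $b_j=y_{2j}$ from it. Then the above-diagonal entries lie in $I$ and the below-diagonal entries lie in $J$, so only the uniform separation remains to be checked.

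This separation is the main obstacle. Comparing an $I$-value and a $J$-value from \emph{different} rows through the intervals loses $2\e$ and only yields a gap of $\delta-\e$. The fix is to compare within a single row of the original ladder. Take a below-diagonal entry $f(a_i,b_j)$ with $i>j$, and note that row $a_i=x_{2i-1}$ also has the coloured above-diagonal entry $f(x_{2i-1},y_{2k})$, which lies in $I$. By the ladder property,
\[
f(a_i,b_j)\leq r_{2i-1}\leq f(x_{2i-1},y_{2k})-\delta-\e\leq\sup I-\delta-\e\leq\inf I-\delta.
\]
Hence $r=\inf I-\delta$ works: every above-diagonal entry is at least $\inf I=r+\delta$, and every below-diagonal entry is at most $r$. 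The result is an $\e$-tight uniform $(k,\delta)$-ladder.
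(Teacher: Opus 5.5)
Your proof is correct and follows the route the paper indicates. Part (a) colours pairs by $(I,J,u)$ and part (b) by $(I,J)$; both apply the bound $R_t(m)\le t^{tm}$ and extract the every-other-index ladder as in Proposition~\ref{prop:aglad-unilad}. In (b), your within-row bound $f(a_i,b_j)\le r_{2i-1}\le f(x_{2i-1},y_{2k})-\delta-\e$ is the detail the paper leaves implicit when it says $u$ is unnecessary for a non-agnostic ladder, and it is needed because a non-uniform ladder is not in general an agnostic ladder.
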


\subsection{Extracting trees from ladders}\label{sec:easyhodges}

In this section, we discuss the function-theoretic analogue of Theorem \ref{thm:hodges}$(1)$ (extracting a tree from a ladder). As mentioned in Subsection \ref{sec:prior}, results of this kind appear in  \cite[Lemma 8.1]{DaskGo}, \cite[Theorem 45 (first bullet)]{AndBen}, and \cite[Lemma A.11]{AADDF}. Each of these  involves some variation of a uniform ladder, formulated using various notions of ``threshold dimension" (see Definition \ref{def:threshdim}). Therefore,  the non-uniform version we state here is technically stronger. However, this has more to do with the fact that those sources do not consider the non-uniform case, rather than a crucial difference in the argument. Indeed, all of the proofs, including ours below, follow the same natural adaptation of Hodges' \cite{hodges} argument in the discrete case.  

\begin{theorem}\label{thm:optree}
For any $t\geq 1$ and $\delta>0$, if  $f\colon X\times Y\to [0,1]$ admits a $(2^{t},\delta)$-ladder,  then $f$ admits a $(t,\delta)$-tree.
\end{theorem}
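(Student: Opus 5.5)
The plan is to induct on $t$, following Hodges' binary-splitting argument. Two simple observations make this work. First, any subsequence of a $(k,\delta)$-ladder (taking the same indices for the $x$'s, $y$'s and values) is again a ladder. Second, nothing in Definition \ref{def:eptree1} requires the nodes or leaves of a tree to be distinct. So I will prove the following statement $P(t)$ by induction on $t\geq 1$: if $x_1,\ldots,x_{2^t}$, $y_1,\ldots,y_{2^t}$ with values $r_1,\ldots,r_{2^t}$ form a $(2^t,\delta)$-ladder for $f$, then $f$ admits a $(t,\delta)$-tree in which every node is some $x_i$ and every leaf is some $y_j$, with $i,j\in[2^t]$.

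\emph{Base case $t=1$.} Given a $(2,\delta)$-ladder, set $x_{\semp}=x_2$ and $r_{\semp}=r_2$, with leaves $y_0=y_1$ and $y_1=y_2$ (the left side refers to tree leaves, the right side to ladder elements). Since $2>1$ we get $f(x_2,y_1)\leq r_2$, and since $2\leq 2$ we get $f(x_2,y_2)\geq r_2+\delta$. This is exactly a $(1,\delta)$-tree.

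\emph{Induction step.} Let $t>1$, set $m=2^{t-1}+1$, and use $x_m$ with value $r_m$ as the root. Split the ladder into the left half, indices $1,\ldots,2^{t-1}$, and the right half, indices $m,\ldots,2^t$. Each half is a $(2^{t-1},\delta)$-ladder, so $P(t-1)$ gives $(t-1,\delta)$-trees $\mathcal{T}_0$ and $\mathcal{T}_1$ built from the left and right halves respectively. Glue them as in the operation $[\phi_0,\phi_1]$: for $u\in\{0,1\}$, put the nodes and values of $\mathcal{T}_u$ at positions $u\conc\sigma$ and its leaves at positions $u\conc\eta$.

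To check the tree condition, take $\sigma\neq\emp$. Then the required inequalities are inherited directly from $\mathcal{T}_u$. At the root $\sigma=\emp$, every leaf under $0$ is some $y_j$ with $j\leq 2^{t-1}<m$, so the ladder gives $f(x_m,y_j)\leq r_m$. Every leaf under $1$ is some $y_j$ with $j\geq m$, so $f(x_m,y_j)\geq r_m+\delta$. Hence the glued configuration is a $(t,\delta)$-tree, and its nodes and leaves come from the original ladder, which establishes $P(t)$.

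I expect no real obstacle. The one point to watch is that the root's condition uses only the \emph{leaves} of the two subtrees, together with the fact that the right half's leaf indices are all at least $m$, including $y_m$ itself. This is why the root is taken as the first index of the right half rather than the last index of the left half.
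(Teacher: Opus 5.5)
Your proposal is correct and is essentially the paper's proof: the same base case (root $x_2$, leaves $y_1,y_2$), the same split into two halves with root $x_{2^{t-1}+1}$, and the same gluing. The only difference is cosmetic: the paper records where nodes and leaves come from using index maps $i\colon \bn{t}\to[2^t]$ and $j\colon \bl{t}\to[2^t]$, whereas you build that bookkeeping into the wording of $P(t)$.
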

\begin{proof}
Fix $\delta>0$ and $f\colon X\times Y\to [0,1]$.  Given an integer $t\geq 1$, let $P(t)$ be the following statement: \medskip

\noindent\textit{Statement of $P(t)$.} If $x_1,\ldots, x_{2^{t}}\in X$ and $y_1,\ldots, y_{2^{t}}\in Y$ form a $(2^{t},\delta)$-ladder for $f$ with values $r_1,\ldots,r_{2^{t}}\in [0,1]$, then there are maps $i\colon \bn{t}\to [2^{t}]$ and $j\colon \bl{t}\to [2^{t}]$ such that $(x_{i(\sigma)}:\sigma\in \bn{t})$ and $(y_{j(\eta)}:\eta\in \bl{t})$ form a $(t,\delta)$-tree for $f$ with values $(r_{i(\sigma)}:\sigma\in \bn{t})$. \medskip

We show $P(t)$ holds for all $t\geq 1$ by induction.\medskip

\textit{Base Case:}  Suppose $x_1,x_2\in X$ and $y_1,y_2\in Y$ form a $(2,\delta)$-ladder for $f$ with values $r_1,r_2\in[0,1]$. Set $i(\emp)=2$, $j(0)=1$, and $j(1)=2$. 
Then 
\begin{align*}
f(x_{i(\semp)},y_{j(0)}) &= f(x_2,y_1)\leq r_2=r_{i(\semp)} \text{ and }\\
f(x_{i(\semp)},y_{j(1)}) &=f(x_2,y_2)\geq r_2+\delta=r_{i(\semp)}+\delta.
\end{align*}
 So $x_{i(\semp)}$ and  $(y_{j(0)},y_{j(1)})$ form a $(1,\delta)$-tree for $f$ with value $r_{i(\semp)}$. Thus $P(1)$ holds.\medskip

\textit{Induction Step}: Fix $t>1$ and suppose  $P(t-1)$ holds. Set $T=2^{t}$ and $T'=2^{t-1}$. Suppose $x_1,\ldots, x_{T}\in X$ and $y_1,\ldots, y_{T}\in Y$ form a $(T,\delta)$-ladder for $f$ with values $r_1,\ldots,r_T\in [0,1]$. 

Since $T=2T'$, we have two $(T',\delta)$-ladders for $f$, namely:
\begin{enumerate}[$(1)$]
\item $(x_1,\ldots,x_{T'})$ and $(y_1,\ldots,y_{T'})$ with values $(r_1,\ldots,r_{T'})$, and
\item $(x_{T'+1},\ldots,x_{T})$ and $(y_{T'+1},\ldots,y_T)$ with values $(r_{T'+1},\ldots,r_T)$. 
\end{enumerate}
  Consequently, by $P(t-1)$,   there are maps 
  \[
  i_0\colon \bn{t-1}\to [T'],\quad j_0\colon \bl{t-1}\to [T'],\quad i_1\colon \bn{t-1}\to [T']+T'\quad\text{ and }\quad j_1\colon \bl{t-1}\to [T']+T'
  \]
  such that for each $u\in\{0,1\}$,  $(x_{i_u(\sigma)}:\sigma\in \bn{t-1})$ and $(y_{j_u(\eta)}:\eta\in \bl{t-1})$ form a $(t-1,\delta)$-tree for $f$ with values $(r_{i_u(\sigma)}:\sigma\in \bn{t-1})$. Thus for any $\sigma\in \bn{t-1}$ and $\eta,\eta'\in \bl{t-1}$ satisfying $\sigma \conc 0\iseq \eta$ and $\sigma\conc 1\iseq \eta'$, the following holds for each $u\in\{0,1\}$:
\begin{align}\label{line}
f(x_{i_u(\sigma)},y_{j_u(\eta)})\leq r_{i_u(\sigma)} \quad \text{and}\quad f(x_{i_u(\sigma)},y_{j_u(\eta')})\geq r_{i_u(\sigma)}+\delta.
\end{align}

Define $i\colon \bn{t}\to[T]$ so that $i(\emp)=T'+1$ and, for each $\sigma\in \bn{t-1}$ and $u\in\{0,1\}$, $i(u\conc\sigma)=i_u(\sigma)$. Define $j\colon \bl{t}\to [T]$ so that for each $\eta\in \bl{t-1}$ and $u\in\{0,1\}$, $j(u\conc \eta)=j_u(\eta)$. We show that $(x_{i(\sigma)}:\sigma\in \bn{t})$ and  $(y_{j(\eta)}:\eta\in \bl{t})$ form a $(t,\delta)$-tree for $f$ with values $(r_{i(\sigma)}:\sigma\in \bn{t})$. So suppose $\tau\in \bn{t}$ and $\mu,\mu'\in \bl{t}$ are such that $\tau\conc 0\iseq \mu$ and $\tau \conc 1\iseq \mu'$. We need to show 
\begin{align}\label{line2}
f(x_{i(\tau)},y_{j(\mu)})\leq r_{i(\tau)}\quad\text{and}\quad f(x_{i(\tau)},y_{j(\mu')})\geq r_{i(\tau)}+\delta.
\end{align}

Suppose first $\tau=\emp$. In this case, $\tau\conc 0=0\iseq \mu$ and $\tau\conc 1=1\iseq \mu'$, hence there exist $\eta,\eta'\in \bl{t-1}$ such that $\mu=0\conc \eta$ and $\mu'=1\conc \eta'$. Consequently, 
\[
j(\mu)=j_0(\eta)< T'+1=i(\tau)\leq j_1(\eta')=j(\mu').
\]
So (\ref{line2}) holds since $(x_1,\ldots, x_{T})$ and $(y_1,\ldots, y_{T})$ form a $(T,\delta)$-ladder for $f$ with values $(r_1,\ldots,r_T)$.

Suppose now $\tau\neq \emp$.  Then there exists $\sigma\in \bn{t-1}$,  $\eta,\eta'\in \bl{t-1}$, and $u\in \{0,1\}$ such that $\tau=u\conc \sigma$, $\mu=u\conc \eta$, $\mu'=u\conc \eta'$, $\sigma\conc 0\iseq \eta$, and $\sigma\conc 1\iseq \eta'$.  In this case, $i(\tau)=i_u(\sigma)$, $j(\mu)=j_u(\eta)$, and $j(\mu')=j_u(\eta')$, and thus (\ref{line2}) follows from (\ref{line}).
\end{proof}

By the previous proof, we also obtain the analogous implication between uniform ladders and trees, which is nearly identical to the second inequality in Eq. (11) of \cite[Lemma A.11]{AADDF}.

\begin{corollary}
For any $t\geq 1$ and $\delta>0$, if $f\colon X\times Y\to [0,1]$ admits a uniform $(2^t,\delta)$-ladder, then $f$ admits a uniform $(t,\delta)$-tree.
\end{corollary}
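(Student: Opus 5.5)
The plan is to observe that the proof of Theorem \ref{thm:optree} never alters the values of the ladder. It only reindexes the ladder's nodes and leaves through the maps $i\colon \bn{t}\to[2^t]$ and $j\colon\bl{t}\to[2^t]$. So I would rerun that induction under the stronger hypothesis and check that uniformity passes through unchanged.

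Concretely, suppose $x_1,\ldots,x_{2^t}$ and $y_1,\ldots,y_{2^t}$ form a uniform $(2^t,\delta)$-ladder for $f$ with common value $r$, so that $r_1=\cdots=r_{2^t}=r$. This is in particular a $(2^t,\delta)$-ladder with values $(r,\ldots,r)$. I would apply the statement $P(t)$ proved by induction in the proof of Theorem \ref{thm:optree}, which is stronger than the theorem itself. It produces maps $i$ and $j$ such that $(x_{i(\sigma)}:\sigma\in\bn{t})$ and $(y_{j(\eta)}:\eta\in\bl{t})$ form a $(t,\delta)$-tree for $f$ whose values are exactly $(r_{i(\sigma)}:\sigma\in\bn{t})$.

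Since every $r_{i(\sigma)}$ equals $r$, this tree has all its values equal to $r$. By Definition \ref{def:uniformtree} it is therefore a uniform $(t,\delta)$-tree with value $r$, which proves the corollary.

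The only real obstacle is confirming that $P(t)$, and not merely the bare conclusion of Theorem \ref{thm:optree}, records that the tree's values are the ladder values $r_{i(\sigma)}$. Inspecting the statement of $P(t)$ shows that it does. No new Ramsey argument or change of scale is needed, in contrast to the passage from trees to uniform trees in Proposition \ref{prop:unitreeimp}.
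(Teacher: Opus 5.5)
Your proposal is correct and follows the paper's own (one-line) justification: the induction statement $P(t)$ in the proof of Theorem~\ref{thm:optree} gives a tree with values $(r_{i(\sigma)}:\sigma\in\bn{t})$. When the ladder values are all equal to $r$, every tree value is $r$, so the tree is uniform.
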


\section{Translation to learning theory}\label{sec:learning}

\subsection{Basic definitions}

In this subsection, we briefly explain how our combinatorial setting of  binary functions $f\colon X\times Y\to [0,1]$ is equivalent to the statistical learning theory setting of function classes $\cF\seq [0,1]^X$. We then define sequential fat-shattering dimension and fat-threshold dimension.

Given a function $f\colon X\times Y\to [0,1]$, we have the function class $\cF_f\coloneqq\{f_b:b\in Y\}$ where, given $b\in Y$, $f_b\colon X\to [0,1]$ denotes the fiber map sending $x$ to $f(x,b)$. Conversely, given a function class $\cF\seq [0,1]^X$, one can define the \emph{evaluation function} $E_{\cF}\colon X\times \cF\to [0,1]$ so that $E_{\cF}(x,f)=f(x)$. This is not a one-to-one correspondence since a function $f\colon X\times Y\to[0,1]$ may have repeated fibers. However, given $\cF\seq [0,1]^X$, the iterated class $\cF_{E_{\cF}}$ coincides with $\cF$.  Along the same lines, the dual of a function class $\cF\seq [0,1]^X$ can be canonically identified with $\cF_{(E_{\cF})^{opp}}$. 

We now state the definition of sequential fat-shattering dimension, which was first formulated  by Rakhlin, Sridharan, and Tewari \cite{RaSrTe0,RaSrTe}   in direct analogy to the Littlestone dimension of a discrete set system. The reader can check that, up to the notation introduced above, this definition is identical to that in \cite[Definition 7]{RaSrTe}. 

\begin{definition}\label{def:Ldim}
Fix $\delta>0$ and $\cF\seq [0,1]^X$. The \emph{sequential $\delta$-fat-shattering dimension of $\cF$} is the (possibly infinite) supremum over all $t$ such that $E_{\cF}$ admits a $(t,\delta)$-tree.
\end{definition}

We can also describe the \emph{dual sequential $\delta$-fat-shattering dimension} of $\cF\seq [0,1]^X$ as the supremum over all $t$ such that $(E_{\cF})^{opp}$ admits a $(t,\delta)$-tree.

Finally, we state the definition of fat-threshold dimension, as formulated in \cite[Definition A.10]{AADDF} and translated similarly. This notion will not be directly relevant to our main results, but we include it for the sake of completeness.

\begin{definition}\label{def:threshdim}
Fix $\delta>0$ and $\cF\seq [0,1]^X$. The \emph{$\delta$-fat-threshold dimension of $\cF$} is the (possibly infinite) supremum over all $k$ such that $E_{\cF}$ admits a uniform $(k,\delta)$-ladder.
\end{definition}


\subsection{Explanation of Theorem \ref{thm:DG}}\label{sec:DG}

In this subsection, we reconcile the result of Daskalakis and Golowich quoted in Subsection \ref{sec:prior} (namely, Theorem \ref{thm:DG}) with how it actually appears in \cite{DaskGo}. First, we state the definition of ``tight thresholds", quoting   \cite[Definition 8.1]{DaskGo} up to two innocuous changes, which we describe afterward.

\begin{definition}\label{def:tightDG}
Fix $k\geq 1$, $\alpha>2\beta>0$, and $\cF\seq [0,1]^X$. Then $\cF$ \emph{contains $k$ thresholds with margin $\alpha$ and tightness $\beta$} if there are $x_1,\ldots,x_k\in X$,   $f_1,\ldots,f_k\in\cF$, and $u,u'\in \R$ such that $|u-u'|\geq\alpha$ and, for all $i,j\in[k]$, if $i\leq j$ then $|f_j(x_i)-u|\leq \beta$, and if $i>j$ then $|f_j(x_i)-u'|\leq\beta$.
\end{definition}

The actual definition in \cite{DaskGo} restricts $u$ and $u'$ to $[0,1]$, whereas we allow $u,u'\in\R$ to avoid a boundary case conflict with our notion of tight ladders. Also, \cite{DaskGo} writes  $f_i(x_j)$ rather than $f_j(x_i)$, but this is equivalent up to reversing the order of indices, as in  Remark \ref{rem:uniformlad-sym}.

We  now summarize the connection between tight thresholds and tight ladders with the following remark, which is immediate from the definitions.

\begin{remark}\label{rem:tightconnection}$~$
\begin{enumerate}[$(a)$]
\item Given $k\geq 1$ and $\delta,\e>0$, a function $f\colon X\times Y\to [0,1]$ admits an $\e$-tight $(k,\delta)$-ladder if and only if $\cF_f$ contains $k$ thresholds with margin $\delta+\e$ and tightness $\frac{\e}{2}$ for which the witnesses $u,u'\in\R$ satisfy $u'<u$.
\item Fix $k\geq 1$, $\alpha>2\beta>0$, and $\cF\seq [0,1]^X$. Suppose $\cF$ contains $k$ thresholds with margin $\alpha$ and tightness $\beta$, witnessed by $u,u'\in\R$. Then either $u'<u$, in which case $E_{\cF}$ admits a $2\beta$-tight $(k,\alpha-2\beta)$-ladder, or $u<u'$, in which case $E_{\cF}$ admits a $2\beta$-tight $(k-1,\alpha-2\beta)$-ladder. 
\end{enumerate}
\end{remark}

Finally, we reconcile Theorem \ref{thm:DG} with \cite[Lemma 8.2]{DaskGo}.  In particular, using Definition \ref{def:Ldim} and Remark \ref{rem:tightconnection}, one can translate \cite[Lemma 8.2]{DaskGo} to a statement identical to Theorem \ref{thm:DG}, except with $4\delta+4\e$ instead of $2\delta+\e$. However, the increase from $2\delta$ to $4\delta$ is only due to the fact that the proof of \cite[Claim 8.3]{DaskGo} fixes $\alpha\geq 4\eta>0$, and  at a certain point replaces $\alpha/2-\eta$ by the lower bound $\alpha/4$. If one instead tracks $\alpha/2-\eta$ through their argument, then the resulting scale  is $2\delta+3\e$ which, for simplicity, we have written as $2\delta+\e$ in Theorem \ref{thm:DG}.\footnote{As in the first paragraph of the proof of Theorem \ref{thm:tight}, this is technically a different statement, but only in a way that affects the absolute constant in the bound.}

 \subsection{Explanation of Theorem \ref{thm:AB}}\label{sec:AB}
In this section, we reconcile the result of Anderson and Benedikt \cite{AndBen} quoted in Subsection \ref{sec:prior} (namely, Theorem \ref{thm:AB}) with the second bullet of \cite[Theorem 45]{AndBen}. This result states an implication from bounded ``$\delta$-threshold dimension" \cite[Definition 41]{AndBen} to bounded sequential fat-shattering dimension. Via Definition \ref{def:Ldim}, sequential fat-shattering dimension translates directly to $(t,\delta)$-trees; and \cite[Definition 41]{AndBen} translates to agnostic $(k,\delta)$-ladders in a similarly straightforward way. This yields Theorem \ref{thm:AB} exactly as stated except  that the bound is not made explicit and, strictly speaking, the formulation in \cite{AndBen} there has the parameter $t$ in Theorem \ref{thm:AB} also depending on $\delta$. However, an analysis of their proof shows that, with some minor tightening,  it yields a bound of the form 
\[
t\leq \frac{\lceil (2\e)\inv\rceil^k -1}{\lceil (2\e)\inv\rceil-1}\leq O((1/\epsilon)^k).
\]

\subsection{Further remarks}\label{rem:finalremarks}

The following are some remarks promised  earlier  in the paper.
In particular, we clarify the exact  form of our first main result (Theorem \ref{thm:hodgesfn1}) that one can obtain from the previous work of Anderson and Benedikt (Theorem \ref{thm:AB}) and of Daskalakis and Golowich (Theorem \ref{thm:DG}). We also justify the claim made in Problem \ref{prob:open1} related to deriving a dual sequential fat-shattering bound from Theorem \ref{thm:AB}. 

First, recall that Theorem \ref{thm:hodgesfn1} concerns extracting ladders from trees. On the other hand, Theorem \ref{thm:DG} extracts uniform ladders from trees and, as we will explain below, Theorem \ref{thm:AB} is also about this process. Thus these results are better compared to Corollary \ref{cor:hodgesfn1}, which shows that a $(\lceil\e\inv\rceil4^k,2\delta+\e)$-tree implies a uniform $(k,\delta)$-ladder. 

We now compare Corollary \ref{cor:hodgesfn1} to Theorem \ref{thm:AB}. The proof in \cite{AndBen} of the latter result  directly constructs an agnostic ladder from a tree. Thus, to obtain Corollary \ref{cor:hodgesfn1}, one must  apply an implication such as  Proposition \ref{prop:aglad-unilad} after the fact. Combining these bounds, we obtain Corollary \ref{cor:hodgesfn1} with the weaker bound $2^{(1/\e)^{O(k/\e)}}$. By running this through the proof of Corollary \ref{cor:dualbound}, we see that Theorem \ref{thm:AB} yields the same triple-exponential bound on dual sequential fat-shattering implied by Theorem \ref{thm:DG} (as claimed in Problem \ref{prob:open1}).

Finally, the above weaker bound for  Corollary \ref{cor:hodgesfn1} also matches what  one gets as a direct consequence of Theorem \ref{thm:DG} (which extracts tight ladders from trees).  Moreover, there is a structural similarity in the proofs of both Theorems \ref{thm:AB} and \ref{thm:DG}. In particular, as discussed after Theorem \ref{thm:tight}, the proof of Theorem \ref{thm:DG}  first uses a large tree to extract a configuration similar to an agnostic ladder (with additional tightness), and then applies the same multicolor Ramsey argument behind Proposition \ref{prop:aglad-unilad} to obtain a uniform tight ladder.

\section{Shelah 2-rank in continuous logic}\label{ss:2rankcont}

In this section, we discuss 2-rank in continuous theories, and note that it corresponds to our notion of uniform trees (Definition \ref{def:uniformtree}).  This section is for readers familiar with model theory, and will contain undefined model-theoretic terminology.  We refer the reader to \cite{BBHU} and \cite{shelah} for more background.

Stability in the context of continuous logic has deep roots in functional analysis. A brief history is discussed at the end of Section 4 of \cite{BBHU}, which also contains the following continuous logic analogue of Shelah 2-rank.

\begin{definition}\label{def:2rankdef}
Let $T$ be a complete first-order theory in continuous logic, and let $\calU$ be a monster model of $T$.   Let $p(x)$ be a partial type over a small subset of $\calU$, and suppose $\varphi_1(x,y)$ and $\varphi_2(x,y)$ are formulas such that the conditions $\varphi_1(x,y)=0$ and $\varphi_2(x,y)=0$ are contradictory. For ordinals $\alpha$, we inductively define $R(p,\varphi_1,\varphi_2,2)\geq\alpha$ as follows.
\begin{enumerate}[\hspace{5pt}$\ast$]
\item $R(p,\varphi_1,\varphi_2,2)\geq 0$ if and only if $p$ is consistent.
\item For a limit  $\lambda$, $R(p,\varphi_1,\varphi_2,2)\geq \lambda$  if and only if  $R(p,\varphi_1,\varphi_2,2)\geq \alpha$ for all $\alpha<\lambda$.
\item $R(p,\varphi_1,\varphi_2,2)\geq \alpha+1$ if  and only if there is some $b\in\calU^y$ and, for each $i\in\{1,2\}$, a  consistent type $p_i$ extending $p\cup\{\varphi_i(x,b)=0\}$ such that $R(p_i,\varphi_1,\varphi_2,2)\geq \alpha$. 
\end{enumerate}
\end{definition}

In \cite[Section 2]{BYscat} (which takes place in the broader setting of ``compact abstract theories"), global stability is shown to be equivalent to finiteness of  these ranks where $p$ is the partial type $\{d(x,x)=0\}$, and stable formulas are defined locally via this rank. While various equivalences are established there, the order property is not directly discussed. The order property version of stability for a continuous formula is defined by Ben Yaacov and Usvyatsov in \cite{BYU} and, although this source does not mention 2-rank, they establish equivalences that connect to 2-rank via \cite{BYscat}.

For local stability in  discrete logic, one considers the 2-rank $R(\{x=x\},\varphi,\neg \varphi,2)$.  The natural choice for a continuous formula is $R(\{d(x,x)=0\}, \varphi\dotminus r,s\dotminus\varphi,2)$ for $r<s$ (recall that $u\dotminus v=0$ is equivalent to $u\leq v$). Following in the same vein, we see that this choice of 2-rank is unbounded if and only if for all $t$, we can find $(b_{\sigma}:\sigma\in \bn{t})$ so that for each $\eta\in \bl{t}$, the type
$$
p_{\eta}=\{\varphi(x,b_{\sigma})\leq r: \sigma\conc 0\iseq \eta\}\cup \{\varphi(x,b_{\sigma})\geq s: \sigma\conc 1\iseq \eta\}
$$
is consistent. For a fixed $t$, realizing these types produces a sequence $(a_\eta:\eta\in\bl{t})$ which, together with $(b_\sigma:\sigma\in \bn{t})$, forms a uniform $(t,s-r)$-tree for $\varphi^{opp}(y,x)$ with value $r$. Note that the appearance of $\varphi^{opp}$ is because of our particular indexing convention for trees (see the footnote prior to Definition \ref{def:hodgestree}).

\section*{Acknowledgments}

 \subsection*{Humans} The authors thank Aaron Anderson for comments on a preliminary draft.

\subsection*{AI} ChatGPT was used for proofreading and for finding several  relevant and useful results in the literature. It also made the following mathematical contributions:

\begin{enumerate}[$(1)$]
\item Proposition \ref{prop:GPT1} was provided by ChatGPT upon direct request. 

\item Our original proof of Theorem \ref{thm:hodgesfn1} only established the uniform analogue in Corollary \ref{cor:hodgesfn1uni}, and only produced a bound of $6\cdot 4^k-2$. We asked ChatGPT if the bound could be improved, and it suggested changing the inductive scheme to its present two-parameter form, which yields the better bound $\binom{2k}{k}-1\sim 4^k/\sqrt{\pi k}$. Moreover, ChatGPT noted that the uniformity assumptions were only needed in our proof due to an imprecision in our original formulation of the conditions labeled (I) and (II). Thus we were able to remove these assumptions with only  minor revisions. It is worth noting that our application of Theorem \ref{thm:hodgesfn1} to dual sequential fat-shattering (Corollary \ref{cor:dualbound}) only requires the uniform case of Corollary \ref{cor:hodgesfn1uni}. However, the present form of Theorem \ref{thm:hodgesfn1} is crucial for obtaining polynomial bounds in several results of our companion paper \cite{CT-QSAR}.

\item Theorems \ref{thm:hodgesfn1} and \ref{thm:optree} were originally part of an early draft of our companion paper \cite{CT-QSAR}, which claimed that a quantitative account of the Shelah-Hodges correspondence for functions did not exist in the literature.  After circulating this draft, Anderson pointed us to his previous work with Benedikt \cite{AndBen}, which compelled us to query ChatGPT for a more extensive literature search. This led us  to the even earlier work of Daskalakis and Golowich \cite{DaskGo} and, in particular, the open question of recovering the bound on extracting tight ladders from trees (claimed in \cite{JuKiTe}). As explained after Theorem \ref{thm:tight}, Daskalakis and Golowich fill the gap in \cite{JuKiTe} by first proving a ``half tight" extraction \cite[Claim 8.4]{DaskGo}, followed by a multicolored Ramsey argument, which adds an exponential to the bound  in \cite{JuKiTe}. We suspected that our methods could be modified to allow for a fully tight extraction and avoid the use of multicolored Ramsey numbers altogether. So we gave ChatGPT the latest draft of our paper and asked it to produce a proof. This draft included a short description of the obstacles such a proof would need to overcome. 
ChatGPT successfully generated an argument following the  induction scheme and rough two-case structure of our original proof of Theorem \ref{thm:hodgesfn1}. The tools used in the proof (e.g. Lemmas \ref{lem:ramsey} and \ref{lem:ramseyleaves}) were already present in our original draft prior to consulting ChatGPT. To streamline and elucidate the proof, we isolated the intermediate proto-ladder configuration, and separated the two technical lemmas comprising the argument. 
\end{enumerate}

All proofs and examples generated by ChatGPT were carefully checked 
 and thoroughly rewritten by the authors.

\end{document}